\def\namedlabel#1#2{\begingroup
	#2%
	\def\@currentlabel{#2}%
	\phantomsection\label{#1}\endgroup
}
\pgfplotsset{compat=1.16}
\pgfplotsset{plot1/.append style={axis x line=middle, axis y line=
		middle, xlabel={$s$}, ylabel={$l$}, axis equal }}
\newtheorem{theorem}{Theorem}[section]
\newtheorem{lemma}[theorem]{Lemma}
\newtheorem{lem}[theorem]{Lemma}
\newtheorem{proposition}[theorem]{Proposition}
\newtheorem{rem}[theorem]{Remark}
\newcommand{\BC}{\mathbb{C}}
\newcommand{\BN}{\mathbb{N}}
\newcommand{\BR}{\mathbb{R}}
\newcommand{\BZ}{\mathbb{Z}}
\newcommand{\SF}{\mathscr{F}}
\newcommand{\1}{\mathbbm{1}} 
\newcommand{\supp}{\text{ supp }}
\newcommand{\norm}[1]{\left\lVert#1\right\rVert}
\newcommand{\normtwo}[1]{{\left\vert\kern-0.25ex\left\vert\kern-0.25ex\left\vert #1 
		\right\vert\kern-0.25ex\right\vert\kern-0.25ex\right\vert}}
\DeclareMathOperator{\di}{d}
\newcommand{\abs}[1]{\ensuremath{\left\vert#1\right\vert}}
\newcommand{\lap}{\bigtriangleup}
\newcommand{\wo}{\langle \nabla \rangle}
\newcommand{\wop}[1]{\langle #1 \rangle }
\newcommand{\icol}[1]{
	\left(\begin{smallmatrix}#1\end{smallmatrix}\right)%
}
\begin{document}
	\title{Norm inflation for the Zakharov system}
	\author{Florian Grube}
	\address{Fakultät für Mathematik, Universität Bielefeld, \\Postfach 10 01 31, 33501 Bielefeld, Germany}
	
	\makeatletter
	\@namedef{subjclassname@2020}{%
		\textup{2020} Mathematics Subject Classification}
	\makeatother
	
	\subjclass[2020]{Primary: 35Q55, Secondary: 35L70}
	
	\keywords{Zakharov system, norm inflation, ill-posedness}
	
	\begin{abstract} We prove norm inflation in new regions of Sobolev regularities for the scalar Zakharov system in the spatial domain $\BR^d$ for arbitrary $d\in \BN$. To this end, we apply abstract considerations of Bejenaru and Tao from \cite{Taoideaillposedness} and modify arguments of Iwabuchi and Ogawa \cite{Iwabuchirefinedideaoftao}. This proves several results on well-posedness, which includes existence of solutions, uniqueness and continuous dependence on the initial data, to be sharp up to endpoints. 
	\end{abstract}

	\maketitle
	
	\section{Introduction}
	We consider the scalar Zakharov system
	\begin{align}\label{zakharovsystem}
	\begin{cases}
	i \partial_t u + \lap u &= v u \\
	\partial_t^2 v -\lap v &= \lap \abs{u}^2
	\end{cases}
	\end{align}
	with initial data $(u,v, \partial_t v)|_{t=0}= (u_0, v_0, v_1) \in H^s(\BR^d) \times H^l(\BR^d) \times H^{l-1}(\BR^d).$\medskip
	
	Zakharov introduced this model 1972 in his work \cite{originalzakharov} to describe Langmuir waves in weakly inducting plasma. In the notation above the real valued function $v: \BR\times \BR^d \to \BR$ models the ion density fluctuation and $u: \BR \times \BR^d \to \BC$ the complex envelope of the electric field.
	
	The aim of this paper is to prove norm inflation for \eqref{zakharovsystem} in some product Sobolev spaces to complement the rich local and global well-posedness theory for the Zakharov system. We say norm inflation, in alignment with the definition given in \cite{Kishimoto2019}, in $H^{s}(\BR^d)\times H^l(\BR^d)$ occurs if for any $\delta>0$ there exists initial data $(u_0,v_0, v_1)\in H^\infty(\BR^d)\times H^\infty(\BR^d)\times H^\infty(\BR^d)$ and a time $T>0$ such that 
	\begin{equation*}
		\norm{u_0}_{H^s}+ \norm{v_0}_{H^l}+\norm{v_1}_{H^{l-1}}<\delta,\quad T<\delta
	\end{equation*}
	and the corresponding smooth solution $(u,v)$ to \eqref{zakharovsystem} exists on $[0,T]$ and satisfies 
	\begin{equation*}
		\norm{u(T)}_{H^s}+\norm{v(T)}_{H^l}>\delta^{-1}.
	\end{equation*}
	This clearly implies the discontinuity of the solution map $(u_0,v_0, v_1)\mapsto (u,v)$ at the origin. 
	\subsection{Known local well-posedness results.} The local and global well-posedness theory for the Zakharov system has been extensively studied \cite{Ozawa1992, kenig1995, bourgaincolliander3d, ginibre, pecher01, colliander2008, pecher08, bejenaru2d, pecher09, Herrconvolution, zakharovindim4Herr, candy2019zakharov, Candy2021zakharov, chen2022, sanwal2021}. We limit the presentation of the past results to the currently best local well-posedness results available. For dimensions $d=1,2,3$, Sanwal proved in \cite{sanwal2021} that the Zakharov system is locally well-posed for initial data in $H^s(\BR^d)\times H^l(\BR^d)\times H^{l-1}(\BR^d)$ for 
	\begin{equation*}
		l>-\tfrac{1}{2}, \qquad \max\{ l-1, \tfrac{l}{2}+\tfrac{1}{4} \}<s<l+2
	\end{equation*}
	by modifying Fourier restriction spaces $X^{s,b}$, introducing new temporal weights and applying a contraction mapping principle. Partial endpoint cases were established in dimension $d=1$ by Ginibre, Tsutsumi and Velo in \cite{ginibre} showing that the lines $l=-\tfrac{1}{2}$ for $0\le s\le \tfrac{1}{2}$ and $2\,s=l+\tfrac{1}{2}$ for $0\le s<1$ are locally well-posed. By modifying the Fourier restriction spaces from \cite{ginibre}, Bejenaru, Herr, Holmer and Tataru proved in \cite{bejenaru2d} that the Zakharov system in two dimensions is locally well-posed in $L^2\times H^{-1/2}\times H^{-3/2}$ and the time of existence depends only on the norm of the initial data. This stands in contrast to the focusing, cubic Schrödinger equation, which arises as a subsonic limit from the Zakharov system, where this regularity is critical and the maximal time of existence depends directly on the initial data. Chen and Wu added the boundary line $l=s+1$ for $s\ge 2$ and $d=2,3$ to the locally well-posed regime in their work \cite{chen2022}. In \cite{candy2019zakharov} Candy, Herr and Nakanishi gave a complete answer to the question of local well-posedness in the energy critical and super critical dimensions $d\ge 4$. They proved local well-posedness of the Zakharov system with initial data in $H^s\times H^l\times H^{l-1}$ where $(s,l)\in \BR^2$ satisfies 
	\begin{equation*}
		l\ge \tfrac{d}{2}-2, \qquad \max\{ l-1, \tfrac{l}{2}+\tfrac{d-2}{4} \}\le s\le l+2, \qquad (s,l)\ne \big( \tfrac{d}{2}, \tfrac{d}{2}-2 \big), \big( \tfrac{d}{2}, \tfrac{d}{2}+1 \big).
	\end{equation*} 
	 
	\subsection{Known ill-posedness results.}
	For spatial dimension $d=1$ norm inflation was proven by Holmer in \cite[Theorem 1.2]{holmer1d} in the region $0<s<1$ and $l>2s-\tfrac{1}{2}$ or $s\le 0$ and $l>-\tfrac{1}{2}$ showing that the boundary line of the well-posed regime $l\le 2s-\tfrac{1}{2}$ is sharp.\smallskip
	
	Biagioni and Linares proved in \cite[Proposition 2.2]{Biagioni} that the Zakharov system is ill-posed in $d=1$ and for $s<0$ and $l\le -\tfrac{3}{2}$ in the sense that the initial data to solution map is not uniformly continuous. In the aforementioned work \cite[Theorem 1.3]{holmer1d} Holmer extended this result to $s=0$ and $l<-\tfrac{3}{2}$. For spatial dimension $d=3$ Ginibre, Tsutsumi and Velo showed in \cite[Proposition 3.2]{ginibre} that the solution map is not Lipschitz for $(s,l)=(\tfrac{1}{2}, -\tfrac{1}{2})$. \smallskip
	
	There are also multiple works proving $C^{2}$-ill-posedness of the Zakharov system in the sense that the second order Fréchet derivative is not continuous in the origin. Holmer proved $C^2$-ill-posedness for $s\in \BR$ and $l<-\tfrac{1}{2}$ and $d=1$ in \cite[Theorem 1.4]{holmer1d}. For $d=2$ Bejenaru, Herr, Holmer and Tataru showed $C^{2}$-ill-posedness of the Zakharov system in \cite[Theorem 1.3]{bejenaru2d} for the regime $l+\tfrac{1}{2}>2\,s$ or $l<-\tfrac{1}{2}$. Additionally, the Zakharov system exhibits $C^2$-ill-posedness in arbitrary dimension $d\in \BN$ for 
	\begin{align*}
		l<\tfrac{d}{2}-2, && s-l>2, &&2\,s -l<\tfrac{d-2}{2}, \\
		s-l<-1 && \text{or} && (s,l)= \big( \tfrac{d}{2}, \tfrac{d}{2}-2 \big), \big( \tfrac{d}{2}, \tfrac{d}{2}+1 \big).
	\end{align*}
	This was proven by Candy, Herr and Nakanishi in \cite[Section 9, Theorem 1.1]{candy2019zakharov}, which complements the aforementioned local well-posedness result with real-analytic flow map for $d\ge 4$.\medskip
	
	Using a mismatch of regularities between the Schrödinger and wave coordinate in normal form, Bejenaru, Guo, Herr and Nakanishi showed in \cite[Theorem 1.3]{zakharovindim4Herr} that there exists initial data in $H^2(\BR^4)\times H^3(\BR^4)\times H^2(\BR^4)$ such that for any $T>0$ there is no distributional solution to the Zakharov system in $L^2((0,T);H^1(\BR^4)\times H^3(\BR^4))$. 
	\subsection{New result.} Since the system \eqref{zakharovsystem} is an interaction of a nonlinear Schrödinger and wave equation, we abuse discrepancies of Sobolev regularities and the structure of the equation to show norm inflation. Thus, our result is naturally divided into Schrödinger- and wave norm inflation. Our result reads as follows.
	\begin{theorem}[Norm inflation]\label{Theorem 1.1}
	    The Zakharov system exhibits norm inflation in the wave part in dimension $d\in \BN$ for regularities $(s,l)\in \BR^2$ if
	    \begin{align*}
	    	\Big(2s<l+\frac{d-2}{2}\text{ and } s<l+\frac{1}{2}\Big) &\text{ or } \Big( 2s<l+\frac{1}{2} \text{ and }\, s<l+\tfrac{1}{2} \Big)\\
	    	& \text{ or }s+1<l\\
	    	 &\text{ or } \Big(l\le -1 \text{ and } s<-1\Big).\\
			\intertext{The Zakharov system exhibits norm inflation in the Schrödinger part for regularities $(s,l)\in \BR^2$ in dimension $d\in \BN$ if}
    		\Big( l<s-2 \text{ and } s> 0 \Big) &\text{ or } \Big( l<\tfrac{d}{2}-2,\, l+\tfrac{1}{2}\le s \text{ and } -\tfrac{d}{2}\le s< \tfrac{d}{2} \text{ but } s\ne 0 \Big)\\
    		&\text{ or } \Big(l+s<-2, \, l+\tfrac{1}{2}\le s \text{ and } s< -\tfrac{d}{2} \Big).
    	\end{align*}
	\end{theorem}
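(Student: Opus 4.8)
The plan is to follow the abstract norm‑inflation principle of Bejenaru and Tao \cite{Taoideaillposedness}, in the refined form of Iwabuchi and Ogawa \cite{Iwabuchirefinedideaoftao}, adapted to the coupling in \eqref{zakharovsystem}. Write the Duhamel formulation
\[
u(t)=e^{it\lap}u_0-i\int_0^t e^{i(t-t')\lap}\bigl(v(t')\,u(t')\bigr)\,dt',\qquad
v(t)=\CW(t)(v_0,v_1)+\int_0^t\frac{\sin\!\big((t-t')|\nabla|\big)}{|\nabla|}\,\lap\abs{u(t')}^2\,dt',
\]
where $\CW(t)(v_0,v_1)=\cos(t|\nabla|)v_0+|\nabla|^{-1}\sin(t|\nabla|)v_1$ is the free wave flow. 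Expanding formally in the size of the data gives $u=\sum_{k\ge1}U_k$, $v=\sum_{k\ge1}V_k$ with $U_1=e^{it\lap}u_0$, $V_1=\CW(t)(v_0,v_1)$, and $U_k,V_k$ multilinear of total degree $k$, defined recursively through the Duhamel maps. The lowest‑degree genuinely nonlinear terms are
\[
V_2(t)=\int_0^t\frac{\sin\!\big((t-t')|\nabla|\big)}{|\nabla|}\,\lap\bigl|e^{it'\lap}u_0\bigr|^2\,dt',\qquad
U_2(t)=-i\int_0^t e^{i(t-t')\lap}\bigl(\CW(t')(v_0,v_1)\,e^{it'\lap}u_0\bigr)\,dt',
\]
the first driving the wave component (quadratic in $u_0$), the second driving the Schrödinger component (bilinear in $u_0$ and the wave data). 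As in \cite{Iwabuchirefinedideaoftao}, it then suffices to produce, for each $(s,l)$ in the stated regions, smooth data with $\norm{u_0^{(n)}}_{H^s}+\norm{v_0^{(n)}}_{H^l}+\norm{v_1^{(n)}}_{H^{l-1}}\to0$ and times $T_n\to0$ so that the relevant principal term ($V_2$ for wave inflation, $U_2$ for Schrödinger inflation) has target norm tending to $\infty$ at $T_n$, while $U_1,V_1$ are small by construction and the remainder $(u,v)-(U_1+U_2,V_1+V_2)$ is $o$ of the principal term; the triangle inequality then yields norm inflation.

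\emph{Wave inflation.} Set $v_0^{(n)}=v_1^{(n)}=0$ and take $\widehat{u_0^{(n)}}=a\sum_{j=1}^R\bigl(\mathbf 1_{Q(\eta_j)}+\mathbf 1_{Q(\eta_j+\kappa\mathbf e)}\bigr)$, a sum of $R$ pairs of frequency cubes at scale $|\eta_j|\sim N$, with the $\eta_j$ well separated so that the only coherent low‑frequency output of $\lap|e^{it'\lap}u_0|^2$ lies near $\xi=\kappa\mathbf e$. For $T\kappa N\lesssim1$ the Schrödinger phase $\sim t'\kappa|\eta_j|$ does not oscillate, the $R$ diagonal interactions add constructively, and $|\widehat{V_2}(T,\kappa\mathbf e)|\sim\kappa^2T^2Ra^2$ over a unit‑scale frequency set; taking $\kappa\sim(TN)^{-1}$ gives $\norm{V_2(T)}_{H^l}\gtrsim Ra^2N^{-2}$. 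Against the smallness $\norm{u_0^{(n)}}_{H^s}\sim R^{1/2}aN^s<\delta$ this forces $N^{2s+2}\lesssim\delta^{3}$, i.e.\ exactly $s<-1$, covering the region $l\le-1,\ s<-1$. The remaining wave regions follow from variants: keeping $\kappa\sim1$ and pairing $\eta_j$ with $-\eta_k$ (equivalently, placing two frequency balls of radius $\rho$ on the sphere $|\xi|\sim N$) targets an output frequency $\sim N'$ with $1\le N'\lesssim N$, so that $\langle N'\rangle^l$ enters and one obtains the lines $s+1<l$ and $2s<l+\tfrac12$; replacing the diagonal sum by the full family of pairs with a common difference and working with $d$‑dimensional frequency balls contributes the dimensional count $\rho^{d}$, which is what produces $2s<l+\tfrac{d-2}{2}$. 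In each case one optimises $N,R,a,\rho,\kappa,T$ under the constraints that the data be small, the principal term large, the relevant phase non‑oscillatory on $[0,T]$, and $T<\delta$.

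\emph{Schrödinger inflation.} Symmetrically, take $\widehat{u_0^{(n)}}$ a single cube (or cube‑pair) at frequency $\sim N$ and the wave data concentrated at frequencies of size $\sim\kappa$, arranged so that $\CW(t')(v_0,v_1)\,e^{it'\lap}u_0$ has a coherent output of mass $M$ near some frequency $\xi_0$ — with $|\xi_0|\sim N$, or $|\xi_0|\sim1$ when both data sit on nearly antipodal caps at frequency $\sim N$. The Schrödinger phase forces $TN\kappa\lesssim1$ (respectively $TN^2\lesssim1$), and $\norm{U_2(T)}_{H^s}\gtrsim T\,M\,\langle\xi_0\rangle^{s}$. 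Balancing this against $\norm{u_0^{(n)}}_{H^s}\sim aN^s<\delta$, $\norm{v_0^{(n)}}_{H^l}+\norm{v_1^{(n)}}_{H^{l-1}}\sim R^{1/2}b\bigl(\langle\kappa\rangle^{l}+\langle\kappa\rangle^{l-1}\bigr)<\delta$ and $T<\delta$, and again using $d$‑dimensional frequency balls where it helps, reproduces the three Schrödinger regions; the case distinctions — $s>0$ versus $-\tfrac d2\le s<\tfrac d2$ versus $s<-\tfrac d2$, and the side conditions $s\ne0$, $l+\tfrac12\le s$ — record which $(s,l)$ survive the optimisation and which of the competing constraints $\langle N\rangle^{s}$, $\langle\kappa\rangle^{l}$, $TN\kappa\lesssim1$ is binding.

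\emph{Remainder and main obstacle.} For each construction the degree‑$\ge3$ remainder $w=(u,v)-(U_1+U_2,V_1+V_2)$ solves an integral equation whose forcing is built from $U_1,U_2,V_1,V_2$ together with a quadratic self‑interaction. Following \cite{Iwabuchirefinedideaoftao} one closes a fixed‑point/bootstrap argument for $w$ on $[0,T_n]$ in an auxiliary high‑regularity space — a Bourgain‑type space $X^{\sigma,b}$ with the temporal weights of \cite{ginibre,sanwal2021} — tracking the explicit dependence on $N,R,a,\rho,\kappa,T_n$, and verifies that $\norm{w(T_n)}$ in the target norm is $o$ of the principal term. This is the technical heart: since $H^s\times H^l$ lies below the local well‑posedness threshold, the naive iteration loses derivatives, so one must exploit the smallness of $T_n$ and the frequency localisation of the data to make the contraction close, and in particular check that the combinatorial frequency configurations responsible for the sharp lines $2s<l+\tfrac{d-2}{2}$ and $2s<l+\tfrac12$ do not also inflate $w$. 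Securing this control uniformly across the whole parameter regime — rather than the lower bound on the principal term, which is comparatively soft — is where the modification of \cite{Iwabuchirefinedideaoftao} enters and is the main obstacle.
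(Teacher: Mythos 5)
Your overall architecture — Picard expansion, lower bound on the quadratic term, domination of the tail — matches the paper, and your heuristics for where several of the boundary lines come from are in the right ballpark (e.g.\ the computation giving $s<-1$ for low-frequency wave output is essentially the paper's case with $\Sigma_1^{(1)}=\{0,Ne_1\}$ and output near the origin). But there is a genuine gap in the step you yourself identify as the technical heart: the control of the remainder.

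You propose to treat $w=(u,v)-(U_1+U_2,V_1+V_2)$ by a fixed-point/bootstrap argument in a Bourgain-type space $X^{\sigma,b}$ with temporal weights, and you correctly observe that this runs into derivative loss because $(s,l)$ lies below the well-posedness threshold. That difficulty is real in the framework you chose, and your proposal does not resolve it; it only names it. The paper avoids it entirely. Because the data are frequency-localized in finitely many translates of a cuboid $Q$, \emph{every} Picard iterate $A_n$ has spatial Fourier support in a set $\Omega_n$ of measure $\lesssim n^{d+5}\abs{Q}$ contained in a ball of radius $\lesssim nN$. Hence one can run the entire tail estimate in weighted $L^2$ on the Fourier side: Young's inequality (putting the factor with smaller index in $L^1$, paying $\abs{\Omega_{\min}}^{1/2}$) gives $\norm{\SF_x A_n}_{L^2}\le C^n\abs{Q}^{(n-1)/2}T^{n-1}\cdot(\text{products of }\norm{\hat u_0}_{L^2},\norm{\hat n_0}_{L^2})$, and the passage to $H^s\times H^l$ costs only $\norm{\wop{\cdot}^{s}}_{L^2(\Omega_n)}$, which grows at most like $C^n$ times an explicit function of $N$, $A$, $s$, $l$. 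The series then converges geometrically under a single smallness condition $\rho\simeq \abs{Q}^{1/2}\max\{N^{1/2}\norm{\hat u_0}_{L^2},\norm{\hat n_0}_{L^2}\}\,T\ll1$, and $\sum_{n\ge3}\norm{A_n}$ is dominated by $\norm{A_2}$. No $X^{s,b}$ machinery, no contraction below threshold, no derivative loss.

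This matters for the statement itself, not just for elegance: several of the side conditions in the theorem that you leave to an unspecified ``optimisation'' are produced exactly by this remainder mechanism rather than by the principal term. For instance, $s<l+\tfrac12$ in the wave regions is the condition $\rho\ll1$ for the chosen $T$; the dichotomy $l\ge-1$ versus $l\le-1$ governs whether $\norm{\wop{\cdot}^{l}\abs{\cdot}}_{L^2(\Omega_n)}$ is dominated by the high- or low-frequency part of the support; and $l+\tfrac12\le s$ in the Schrödinger regions decides which term realises the maximum $\max\{N^{1/2}\norm{\hat u_0}_{L^2},\norm{\hat n_0}_{L^2}\}$ in the iterate bounds. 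Without carrying out the tail estimate in a workable framework, these conditions cannot be derived, so the proof of the stated regions is incomplete. A secondary, fixable issue: you also need the mild-solution identity for the sum of the series (continuity of the Duhamel map on a ball in a higher-regularity space, which is cheap since the data are in $H^\infty$), which your sketch implicitly assumes.
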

	\begin{rem}
		In sight of the aforementioned local well-posedness theory, \cref{Theorem 1.1} complements the well-posed Sobolev regularities except for some boundary lines, those $(s,l)\in \BR^2$ with $s=0$ and $l<-\tfrac{1}{2}$ as well as a bounded region for $d=1,2$ below $l<-\tfrac{1}{2}$.
	\end{rem} 
	In the following \cref{fig:plot1} we denote on the $l$-axis the wave Sobolev regularities and the $s$-axis are the Schrödinger Sobolev regularities. The light gray shaded areas excluding the dashed boundary lines are those regularity tuple for which the Zakharov system exhibits norm inflation. To classify our result, the previously mentioned local well-posedness results are illustrated in the dark gray shaded area including solid boundary lines and black dots. The black circles with white interior represent Sobolev indices for which there are no published well-posedness nor ill-posedness results in sense of Hadamard \cite{hadamardoriginal}. We want to emphasize that, in contrast to the above mentioned results on $C^2$-ill-posedness and non Lipschitz continuity of the initial data to solution map, we understand in the following plot well-posedness as the existence, uniqueness of solutions for given initial data in $H^s\times H^l\times H^{l-1}$ and continuity of the solution map. Thus, we call the system for given Sobolev regularities $(s,l)\in \BR^2$ ill-posed whenever at least one of these conditions is not met. Lastly, the black star represents the non existence of a distributional solution in $H^2(\BR^4)\times H^3(\BR^4)$ proven in \cite[Theorem 1.3]{zakharovindim4Herr} for dimension $d=4$ and for $d>4$ it shall have the same meaning as a black circle with white interior. 
	\begin{figure}[htp!]
		\centering
		\subfloat[$d=1$]{\begin{tikzpicture}
				\begin{axis}[plot1, xtick={-1,0,1,2,3,4}, ytick={-1.5,-0.5,0.5,1.5,2.5,3.5},
					xmin=0, xmax=3, ymin=-2, ymax=4]
					\addplot[domain=0:1, thick] {2*x-0.5};
					\addplot[domain=1:1.5, dashed, thick] {2*x-0.5};
					\addplot[domain=1.5:3.5, dashed, thick] {x+1};
					\addplot[domain=0:0.5, thick] {-0.5};
					\addplot[domain=0.5:1.5, dashed, thick] {-0.5};
					\addplot[domain=1.5:5, dashed, thick] {x-2};
					\fill[fill=black, opacity=0.3] (0,-0.5) -- (1.5,-0.5) -- (5,3) -- (5,4.5) -- (3.5,4.5) -- (1.5,2.5) -- cycle;
					\addplot[domain=-0.75:-0.5, dashed, thick] {-x-2};
					\addplot[domain=-0.75:0, dashed, thick] {x-0.5};
					\addplot[domain=-0.5:0.5, dashed, thick] {-1.5};
					\addplot[domain=0.5:1.5, dashed, thick] {x-2};
					\draw [dashed, very thick] (axis cs:{0,-1.5}) -- (axis cs:{0,-2});
					\fill[fill=black, opacity=0.1] (-3,5) -- (4,5) -- (1.5,2.5) -- (0,-0.5) -- (-0.75,-1.25) -- (-0.5,-1.5) -- (0.5, -1.5)-- (5,3) -- (5,-2) -- (-3,-2) -- cycle;
					\addplot[mark=*, only marks] coordinates {(0,-0.5)(0.5,-0.5)};
					\addplot[mark=*,fill=white,only marks] coordinates {(1,1.5)(1.5,-0.5)(1.5,2.5)};
				\end{axis}
		\end{tikzpicture}}
		\subfloat[$d=2$]{
			\begin{tikzpicture}
				\begin{axis}[plot1, xtick={-1,1,2,3,4}, ytick={-1.5,-0.5,0.5,1.5,2.5,3.5},
					xmin=0, xmax=3, ymin=-2, ymax=4]
					\addplot[domain=0:1.5, dashed, thick] {2*x-0.5};
					\addplot[domain=1.5:2, dashed, thick] {x+1};
					\addplot[domain=2:3.5, thick] {x+1};
					\addplot[domain=0:1.5, dashed, thick] {-0.5};
					\addplot[domain=1.5:5, dashed, thick] {x-2};
					\fill[fill=black, opacity=0.3] (0,-0.5) -- (1.5,-0.5) -- (5,3) -- (5,4.5) -- (3.5,4.5) -- (1.5,2.5) -- cycle;
					\addplot[domain=-0.5:0, dashed, thick] {x-0.5};
					\addplot[domain=-0.5:1, dashed, thick] {-1};
					\addplot[domain=1:1.5, dashed, thick] {x-2};
					\draw [dashed, very thick] (axis cs:{0,-1}) -- (axis cs:{0,-2});
					\fill[fill=black, opacity=0.1] (-3,5) -- (4,5) -- (1.5,2.5) -- (0,-0.5) -- (-0.5,-1)  -- (1, -1)-- (5,3) -- (5,-2) -- (-3,-2) -- cycle;
					\addplot[mark=*, only marks] coordinates {(0,-0.5)(2,3)};
					\addplot[mark=*,fill=white,only marks] coordinates {(1.5,-0.5)(1.5,2.5)};
				\end{axis}
		\end{tikzpicture}}\\
		\subfloat[$d=3$]{
			\begin{tikzpicture}
			\begin{axis}[plot1, xtick={-1,1,2,3,4}, ytick={-1.5,-0.5,0.5,1.5,2.5,3.5},
				xmin=0, xmax=3, ymin=-2, ymax=4]
				\addplot[domain=0:1.5, dashed, thick] {2*x-0.5};
				\addplot[domain=1.5:2, dashed, thick] {x+1};
				\addplot[domain=2:3.5, thick] {x+1};
				\addplot[domain=0:1.5, dashed, thick] {-0.5};
				\addplot[domain=1.5:5, dashed, thick] {x-2};
				\fill[fill=black, opacity=0.3] (0,-0.5) -- (1.5,-0.5) -- (5,3) -- (5,4.5) -- (3.5,4.5) -- (1.5,2.5) -- cycle;
				\draw [dashed, very thick] (axis cs:{0,-0.5}) -- (axis cs:{0,-2});
				\fill[fill=black, opacity=0.1] (-3,5) -- (4,5) -- (1.5,2.5) -- (0,-0.5)   -- (1.5, -0.5)-- (5,3) -- (5,-2) -- (-3,-2) -- cycle;
				\addplot[mark=*, only marks] coordinates {(2,3)};
				\addplot[mark=*,fill=white,only marks] coordinates {(0,-0.5)(1.5,-0.5)(1.5,2.5)};
			\end{axis}
		\end{tikzpicture}}
		\subfloat[$d\ge4$]{
			\begin{tikzpicture}
				\begin{axis}[plot1, xtick={2,3.5}, xticklabels={$\tfrac{d-3}{2}$,$\tfrac{d}{2}$}, ytick={-0.5,1.5,4.5}, yticklabels={$-0.5$,$\tfrac{d}{2}-2$,$\tfrac{d}{2}+1$},
					xmin=0, xmax=5, ymin=-1, ymax=5.5]
					\addplot[domain=2:3.5, thick] {2*x-2.5};
					\addplot[domain=3.5:6, thick] {x+1};
					\addplot[domain=2:3.5, thick] {1.5};
					\addplot[domain=3.5:7, thick] {x-2};
					\fill[fill=black, opacity=0.3] (2,1.5) -- (3.5,1.5) -- (6.5,4.5) -- (7,6.5) -- (5.5,6.5) -- (3.5,4.5) -- cycle;
					\draw [dashed, very thick] (axis cs:{0,-0.5}) -- (axis cs:{0,-2});
					\fill[fill=black, opacity=0.1] (2,1.5) -- (3.5,1.5) -- (6.5,4.5) -- (6.5,-2) -- (-2,-2) -- (-2,6.5) --  (5.5,6.5) -- (3.5,4.5) -- cycle;
					\addplot[mark=*, only marks] coordinates {(2,1.5)};
					\addplot[mark=*,fill=white, only marks] coordinates {(3.5,1.5)};
					\addplot[mark=mystar,only marks] coordinates {(3.5,4.5)};
				\end{axis}
		\end{tikzpicture}}	
		\caption{Regularities for which the Zakharov system is locally well-posed or exhibits norm inflation.}
		\label{fig:plot1}
	\end{figure}
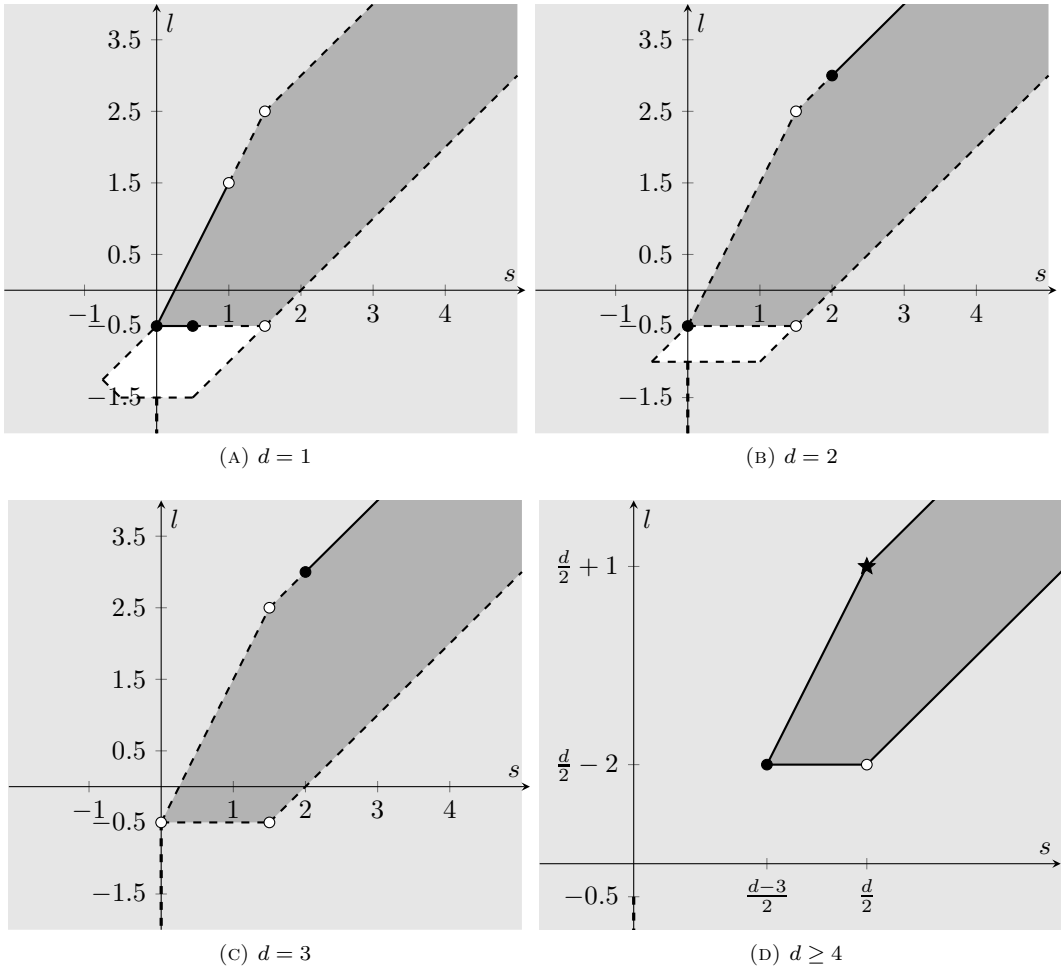\medskip

	\subsection{Outline}
		In \cref{sec:notation} we introduce notations used throughout this work. A typical reduction of the Zakharov system to a first order problem will be done in \cref{sec:duhameliteration} as well as the introduction of Duhamel's formula and a Picard iteration for the reduced equation. We begin \cref{sec:mainsection} by establishing the class of initial data we later use to show norm inflation. Thereafter, we prove upper Sobolev estimates on all terms of the Picard iteration, show a solution to our initial data exists in certain Sobolev spaces up to a time $T>0$ depending on the size of initial data in $H^{s}\times H^l$. The last ingredient for norm inflation, a lower bound on the Sobolev norm of the quadratic term in the Picard iteration, will be derived before proving \cref{Theorem 1.1} in \cref{sec:proofth}.
	\subsection*{Acknowledgments} This work is based on my master's thesis supervised by Professor Sebastian Herr. I would like to thank Sebastian Herr for fruitful discussions and helpful comments in development of this work. 
	\section{Notation and preliminaries}\label{sec:notation}Throughout this work we use the following notation. We say that $A\lesssim B$ if there exists a constant $C>0$ such that $A\le CB$ and $A\simeq B$ if $A\lesssim B$ and $B\lesssim A$. $\Re z$ is the real part and $\overline{z}$ the complex conjugate of a complex number $z\in \BC$. Additionally, $a\wedge b := \min\{ a,b \}$ and $a\vee b := \max\{ a,b \}$ for real numbers $a,b$. We write $\# A$ the number of elements in a finite set $A$. The spatial Fourier transformation of a function $f:\BR^d\to \BC$ will be written as $\SF_x f$ or $\hat{f}$ and the inverse Fourier transformation as $\SF_x^{-1}f$ or $\check{f}$. We denote by $\wo^{-s}$ the Bessel potential operator for Schwartz functions, which is defined by $\SF_x \wo^{-s} f(\xi) = (1+\abs{\xi}^2)^{-s/2} \widehat{f}(\xi)$, and as usual the Bessel potential space $H^s(\BR^d)$ as the closure of Schwartz functions with respect to the norm \begin{equation*}
		\norm{f}_{H^s} = \norm{(1+\abs{\cdot}^2)^{s/2} \widehat{f}}_{L^2}.
	\end{equation*}
	We define for short $e^{it\lap}u_0 = \SF_x^{-1} e^{-it\abs{\xi}^2} \hat{u}_0$ the Schrödinger propagator and $e^{-it \abs{\nabla}}n_0= \SF_x^{-1} e^{-it\abs{\xi}}\hat{n}_0$ the reduced wave propagator. 
	
	\subsection{Duhamel formula and Picard iteration}\label{sec:duhameliteration}
	We start by introducing a standard reduction of the wave part to a first order equation. Suppose $(u,v)$ is a solution to the Zakharov system \eqref{zakharovsystem} with initial data
	$$
	(u,v,\abs{\nabla}^{-1}\partial_t v)|_{t=0} = (u_0, v_0,v_1) \in H^s (\BR^d;\BC) \times H^l(\BR^d;\BR)\times H^l(\BR^d;\BR).
	$$
	Then $(u,n)$, where $n:= v+i\abs{\nabla}^{-1}\partial_t v$, solves the first order system
	\begin{equation}\label{eq:zakharovfirstorderalternative}
		\begin{cases}
			i \partial_t u +\lap u &= \Re(n)u\\
			i \partial_t n - \abs{\nabla}n &= \abs{\nabla}\abs{u}^2= \abs{\nabla} u \overline{u}
		\end{cases}
	\end{equation} 
	for initial data $(u, n)|_{t=0} = (u_0,v_0+iv_1)\in H^s(\BR^d;\BC)\times H^l(\BR^d; \BC), n_0= v_0+iv_1$. The reverse works similarly. We will prove norm inflation for the reduced system \eqref{eq:zakharovfirstorderalternative} with frequency localized initial data in some cube such that the wave initial part is real valued. This implies norm inflation for the original system. \medskip
	
	As in \cite{Kishimoto2019}, we will use the power series expansion of the solution to prove norm inflation. This amounts to considering the two dimensional integral equation obtained by Duhamel's trick
	\begin{equation}\label{eq:integralgleichungalternative}
		\begin{pmatrix}
			u(t)\\
			n(t)
		\end{pmatrix} = \begin{pmatrix}
			e^{it\lap}u_0\\
			e^{-it\abs{\nabla}}n_0
		\end{pmatrix}-i\int_{0}^{t}\begin{pmatrix}
			e^{i(t-s)\lap}\Re(n)(s) u(s)\\
			 e^{-i(t-s)\abs{\nabla}}\abs{\nabla} u(s)\overline{u}(s)
		\end{pmatrix} \di s=: \Phi_{\icol{u_0\\n_0} }\left(\icol{u\\n}\right)(t),
	\end{equation}
	which is the mild formulation of the reduced Zakharov system \eqref{eq:zakharovfirstorderalternative}. In alignment with the abstract ideas in \cite{Taoideaillposedness}, we will define the solution to the corresponding homogeneous problem 
\begin{equation*}
	L\left(\begin{pmatrix}
		u_0\\n_0
	\end{pmatrix}\right)(t):= \begin{pmatrix}
		e^{it\lap}&0\\
		0&e^{-it\abs{\nabla}}
	\end{pmatrix} \begin{pmatrix}
		u_0\\
		n_0
	\end{pmatrix}
\end{equation*}
and the real 2-linear Operator
	\begin{equation*}
	N\left(\begin{pmatrix}
		u_1\\n_1
	\end{pmatrix}, \begin{pmatrix}
		u_2\\
		n_2
	\end{pmatrix}\right)(t):=-i\int_{0}^{t}\begin{pmatrix}
		e^{i(t-s)\lap}&0\\
		0&e^{-i(t-s)\abs{\nabla}}
	\end{pmatrix} \begin{pmatrix}
		\Re(n_2)\, u_1\\
		\abs{\nabla} u_1\overline{u_2}
	\end{pmatrix}(s)\di s
\end{equation*}
for sufficiently regular measurable functions $u_0,u_1,u_2,n_0,n_1,n_2:\BR^d\to \BC$. Now we will construct a formal solution to the first order Zakharov system given by Picard iteration $\sum_{n=1}^\infty A_n \icol{u_0 \\ n_0}$, where the $n$-linear maps $A_n \icol{u_0\\ n_0}$ are defined recursively via 
	\begin{align}
	A_1 \icol{u_0\\ n_0}&:= L \icol{u_0\\ n_0}, \label{eq:defA1} \\
	A_n \icol{u_0\\ n_0}&:= \sum\limits_{\substack{n_1, n_2 \ge 1\\ n_1+n_2 = n}} N(A_{n_1} \icol{u_0\\ n_0}, A_{n_2} \icol{u_0\\ n_0}) \label{eq:defAn}.
	\end{align}	
	By superscripts $A_n^{(1)}$ and $A_n^{(2)}$ we refer to the projections onto the first and respectively second image coordinate.

	\section{Norm inflation}\label{sec:mainsection}
		    We take initial data localized in frequency dependent on a big $N\in \BN$ for fixed $s,l\in \BR$ 
\begin{equation}\label{eq:anfangsdaten2}
	\begin{pmatrix}
		\hat{u}_0\\
		\hat{n}_0
	\end{pmatrix} = \begin{pmatrix}
	\sum\limits_{\eta \in \Sigma_1^{(1)}} r\,\abs{Q}^{-\tfrac{1}{2}}\,\wop{\cdot}^{-s}\, \1_{\eta +Q}\\
	\sum\limits_{\eta \in \Sigma_1^{(2)}} r\,\abs{Q}^{-\tfrac{1}{2}}\,\wop{\cdot}^{-l} \, \1_{\eta + Q}
	\end{pmatrix},
\end{equation}
 where  $  \Sigma_1^{(1)} \subset \{ 0, \eta_1, \eta_2 \}$ and $\Sigma_1^{(2)} \subset \{ 0, \pm \eta_3 \}$ with $\eta_1, \eta_2, \eta_3 \in e_1\BZ \cup N e_1 \BZ\setminus\{0\}$, $e_1= (1,0,\dots, 0)\in \BR^d$ and either
\begin{align}
 Q&:= (-\tfrac{A}{2}, \tfrac{A}{2})^d \tag{typ I}\label{eq:caseI}\\
 	\intertext{or}
  Q&:= (-\tfrac{A}{2}, \tfrac{A}{2})\times (-1,1)^{d-1} \tag{typ II}\label{eq:caseII}
 \end{align}
 are $d$-dimensional open cuboids centered in zero, $A$ a positive real number with $0<A\le N$. We assume that all translations are at most of size $\abs{\eta_i}\lesssim N$ and there is at least one translation that is of order $N$. Notice that $\norm{u_0}_{H^s}, \norm{n_0}_{H^l}\lesssim r$. We choose $r:= (\log^2 N)^{-1}$ such that the initial data slowly decay to zero. Further, we define their Fourier support for short as 
 \begin{align*}
 	\Omega_1^{(1)}:= \bigcup\limits_{\eta \in \Sigma_1^{(1)}} (\eta + Q), \qquad   \Omega_1^{(2)}:= \bigcup\limits_{\eta \in \Sigma_1^{(2)}} (\eta + Q).
 \end{align*} 
Additionally, we assume $-\Omega_1^{(2)}= \Omega_1^{(2)}$ such that the spatial Fourier transformed wave initial data $n_0$ is invariant under the transform $(\cdot)\mapsto (-\,\cdot)$ and thus the wave initial data is real valued. 

We take the same approach as done in \cite[Section 3]{Taoideaillposedness} and define a formal series expansion $\sum_{n=1}^{\infty} A_n\icol{u_0\\ n_0}$ of the solution to the reduced Zakharov system \eqref{eq:zakharovfirstorderalternative} with terms $A_n\icol{u_0\\ n_0}$ as introduced in \eqref{eq:defA1} and \eqref{eq:defAn}. Under additional assumptions, we will later show that the series expansion will converge in the space of continuous functions on a compact time interval with values in product Sobolev spaces $C([0,T]; H^s\times H^l)$ and is a mild solution to the reduced Zakharov system \eqref{eq:integralgleichungalternative}. We calculate the spatial Fourier transform of the linear and quadratic terms in the expansion explicitly.
$$
\widehat{A_1}\icol{u_0\\n_0}(t,\xi)= \begin{pmatrix}
	\sum\limits_{\eta \in \Sigma_1^{(1)}} r\, \abs{Q}^{-\tfrac{1}{2}}\,\wop{\xi}^{-s} e^{-it\abs{\xi}^2}\1_{\eta +Q}\\
	\sum\limits_{\eta \in \Sigma_1^{(2)}} r\, \abs{Q}^{-\tfrac{1}{2}}\,\wop{\xi}^{-l} e^{-it\abs{\xi}}\1_{\eta+Q}
\end{pmatrix}$$
and with the consideration that the wave initial data is real valued
\begin{equation*}
	\SF_x\Re(A_1^{(2)})(t,\xi)= \sum\limits_{\eta \in \Sigma_1^{(2)}} r\, \abs{Q}^{-\tfrac{1}{2}}\,\wop{\xi}^{-l}\frac{e^{-it\abs{\xi}} +e^{it\abs{\xi}} }{2}\1_{\eta + Q}(\xi)= \cos(t\abs{\xi}) \hat{n}_0(\xi)
\end{equation*}
for all $\xi\in \BR^d$ and $t\ge 0$. We define the mixed phase functions 
\begin{equation*}
	\phi_{\pm}^{(1)}(\xi,\eta):= \abs{\xi}^2-\abs{\eta}^2\pm\abs{\xi-\eta},\qquad
	\phi^{(2)}(\xi, \eta):= \abs{\xi}-\abs{\eta}^2+\abs{\eta-\xi}^2
\end{equation*}
for any frequencies $\xi,\eta\in \BR^d$.
\begin{align*}
	\SF_x A_2^{(1)}\icol{u_0\\ n_0}(t,\xi) &= -i \int_{0}^{t}e^{-i(t-s)\abs{\xi}^2} \big(\widehat{A_1^{(1)}}(s)\ast\widehat{\Re(A_1^{(2)})}(s)\big)(\xi) \di s\nonumber\\
	&= -i  \sum\limits_{\substack{\eta_u \in \Sigma_1^{(1)} \\ \eta_n \in \Sigma_1^{(2)}}} r^2 \abs{Q}^{-1}     \int_{0}^t e^{-i(t-s)\abs{\xi}^2} \big(e^{-is\abs{\cdot}^2} \wop{\cdot}^{-s}\1_{\eta_u + Q}\ast \wop{\cdot}^{-l}\cos(s\abs{\cdot})\1_{\eta_n + Q} \big)(\xi) \di s.   \nonumber\\
	&= -i\sum\limits_{\substack{\eta_u \in \Sigma_1^{(1)} \\ \eta_n \in \Sigma_1^{(2)}}} r^2\,\abs{Q}^{-1} e^{-it\abs{\xi}^2}\int_{\eta_u +Q} \frac{\1_{\eta_n+Q }(\xi-\eta) }{\wop{\eta}^{s}\wop{\xi-\eta}^{l}}\int_{0}^{t}e^{is \phi_{+}^{(1)}(\xi,\eta)}+e^{is \phi_{-}^{(1)}(\xi,\eta)} \di s \di \eta\\
	\intertext{and}
	\SF_x A_2^{(2)}\icol{u_0\\n_0}(t,\xi)&= -i \int_{0}^{t}e^{-i(t-s)\abs{\xi}} \abs{\xi}\big( \widehat{A_1^{(1)}}(s)\ast\widehat{\overline{A_1^{(1)}}}(s) \big)(\xi) \di s\nonumber\\
	&=  -i \int_{0}^{t}e^{-i(t-s)\abs{\xi}} \abs{\xi}\big( \widehat{A_1^{(1)}}(s)\ast\overline{\widehat{A_1^{(1)}}}(s,-\,\cdot) \big)(\xi) \di s\nonumber\\
	&= -i \sum\limits_{\substack{\eta_{u,1} \in \Sigma_1^{(1)} \\ \eta_{u,2} \in \Sigma_1^{(1)}}} r^2 \abs{Q}^{-1} \abs{\xi} e^{-it\abs{\xi}} \int_{\eta_{u,1}+Q} \frac{\1_{\eta_{u,2}+Q}(\eta-\xi)}{\wop{\eta}^s \wop{\eta-\xi}^s} \int_{0}^{t} e^{is\phi^{(2)}(\xi,\eta)  } \di s \di \eta. 
\end{align*}
We will now analyze the spatial frequency support of the Fourier transformed terms $A_n\icol{u_0\\n_0}$.
\begin{lemma}\label{lem:traeger2}
	For any $n\in \BN$ and all times $t\ge 0$
	\begin{equation*}
		\supp \SF_x A_{n}^{(1)}(t) \subset \Omega_n^{(1)} := \bigcup\limits_{\eta \in \Sigma_n^{(1)}} (\eta+n Q),\quad
		\supp \SF_x A_{n}^{(2)}(t) \subset \Omega_n^{(2)} := \bigcup\limits_{\eta \in \Sigma_n^{(2)}} (\eta+n Q),
	\end{equation*}
	where \begin{align*}
	    \Sigma_n^{(1)}&:=\bigcup_{\substack{ n_1+n_2=n\\ 0\le n_1, n_2 \\ 2\nmid n_1 }}\, K_{n_1,n_2}, \quad    \Sigma_n^{(2)}:= \bigcup_{\substack{ n_1+n_2=n\\ 0\le n_1, n_2 \\ 2\mid n_1 }}\,K_{n_1,n_2}, \quad
	    K_{1,n_2}= \{ \eta \in \BR^d \,|\, \eta =  \eta_1 + \sum_{j=1}^{n_2} \tilde{\eta}_j,\, \eta_j \in \Sigma_u , \,\tilde{\eta}_j \in \Sigma_n\}
	\end{align*}
	and $K_{n_1,n_2}= \{ \eta \in \BR^d \,|\, \eta = \sum_{j=1}^{n_1}  \eta_j + \sum_{j=1}^{n_2} \tilde{\eta}_j,\, \eta_j \in \Sigma_u \cup - \Sigma_u, \,\tilde{\eta}_j \in \Sigma_n\}$ for $n_1\ne 1$.\\
	Additionally, $\#\sigma_n^{(1)}, \#\sigma_n^{(2)}\le 3^4\,n^5$ and the Lebesgue measure of the supports are bounded by 
	\begin{equation*}
		\abs{\Omega_n^{(1)}}, 	\abs{\Omega_n^{(2)}} \le   3^4 n^{d+5} \abs{Q}.
	\end{equation*}
\end{lemma}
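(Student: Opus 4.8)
The plan is to argue by induction on $n$, propagating simultaneously the three assertions: the frequency support description, the cardinality bound $\#\Sigma_n^{(i)} \le 3^4 n^5$, and the measure bound $|\Omega_n^{(i)}| \le 3^4 n^{d+5}|Q|$. The base case $n=1$ is immediate from \eqref{eq:anfangsdaten2}: $\Sigma_1^{(1)}\subset\{0,\eta_1,\eta_2\}=:\Sigma_u$ and $\Sigma_1^{(2)}\subset\{0,\pm\eta_3\}=:\Sigma_n$ each have at most three elements, so $\#\Sigma_1^{(i)}\le 3\le 3^4$, and $\Omega_1^{(i)}$ is a union of at most three translates of $Q$, hence $|\Omega_1^{(i)}|\le 3|Q|\le 3^4 \cdot 1^{d+5}|Q|$. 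Here $K_{1,0}=\Sigma_u$, $K_{0,1}=\Sigma_n$, matching the definitions for $n=1$ (with $n_1=1$ odd feeding the Schrödinger slot, $n_1=0$ even feeding the wave slot).

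For the inductive step I would use the recursion \eqref{eq:defAn}, $A_n=\sum_{n_1+n_2=n}N(A_{n_1},A_{n_2})$, together with the explicit form of $N$. The key structural observation is how $N$ acts on Fourier supports: the first component of $N$ convolves $\widehat{A_{n_1}^{(1)}}$ with $\widehat{\Re A_{n_2}^{(2)}}$, and since taking real parts only reflects frequencies through the origin (and $-\Sigma_n=\Sigma_n$ by hypothesis, so the wave supports are symmetric), $\supp\widehat{\Re A_{n_2}^{(2)}}\subset \Omega_{n_2}^{(2)}$; thus $\supp \SF_x N^{(1)}(A_{n_1},A_{n_2})(t) \subset \Omega_{n_1}^{(1)} + \Omega_{n_2}^{(2)}$, and the time integral does not enlarge the spatial support. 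Similarly the second component convolves $\widehat{A_{n_1}^{(1)}}$ with $\overline{\widehat{A_{n_2}^{(1)}}}(\,\cdot\,,-\,\cdot\,)$, whose support is $-\Omega_{n_2}^{(1)}$, and multiplication by $|\xi|$ is harmless, so $\supp \SF_x N^{(2)}(A_{n_1},A_{n_2})(t)\subset \Omega_{n_1}^{(1)} - \Omega_{n_2}^{(1)}$. Writing $\Omega_{m}^{(i)}=\bigcup_{\eta\in\Sigma_m^{(i)}}(\eta+mQ)$ and using $mQ+m'Q=(m+m')Q$ (true because $Q$ is a cube/cuboid centered at $0$, so Minkowski sums of its dilates add), one gets $\Omega_{n_1}^{(1)}\pm\Omega_{n_2}^{(j)}\subset\bigcup_{\eta\in\Sigma_{n_1}^{(1)}\pm\Sigma_{n_2}^{(j)}}(\eta+nQ)$. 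Tracking parity: $A_{n_1}^{(1)}$ carries odd "Schrödinger count" and $A_{n_2}^{(2)}$ carries even count; the Schrödinger output $A_n^{(1)}$ should have odd count $n_1+\text{(odd part of }n_2)$ — one checks the bookkeeping is exactly encoded by the condition $2\nmid n_1$ resp. $2\mid n_1$ in the definitions of $\Sigma_n^{(1)},\Sigma_n^{(2)}$, and that the sets $K_{n_1,n_2}$ are precisely the sums $\Sigma_u^{(\pm)}+\cdots$ obtained by unwinding the recursion. This gives the support statement.

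For the cardinality bound, the recursion yields $\#\Sigma_n^{(i)}\le\sum_{n_1+n_2=n}\#\Sigma_{n_1}^{(1)}\cdot\#\Sigma_{n_2}^{(\cdot)}$; I would instead observe directly from the closed form that every element of $\Sigma_n^{(i)}$ is a sum of exactly $n$ vectors each lying in $\Sigma_u\cup(-\Sigma_u)\cup\Sigma_n\subset\{0,\pm\eta_1,\pm\eta_2,\pm\eta_3\}$, but all the $\eta_i$ are collinear (in $e_1\BZ\cup Ne_1\BZ$), so each such sum is determined by far fewer parameters than $n$; a crude count by how many of the $n$ summands come from each of the (at most $6$) generators, together with the constraint that the total is fixed, gives a polynomial bound, and one checks $3^4 n^5$ suffices — the safest route is to bound $\#\Sigma_n^{(i)}$ by the number of choices of the ordered pair $(n_1,n_2)$ (at most $n+1$) times the worst-case product of the two smaller cardinalities and verify $3^4 n^5$ dominates the resulting recursion, e.g. via $\sum_{n_1+n_2=n}3^4 n_1^5\cdot 3^4 n_2^5\le 3^4 n^5$ after absorbing one factor of $3^4$, using $n_1^5 n_2^5\le n^9/ (\text{something})$ and $n+1$ choices — this is the kind of routine but slightly delicate estimate I would do carefully. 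Finally the measure bound follows from $|\Omega_n^{(i)}|\le\#\Sigma_n^{(i)}\cdot|nQ|=\#\Sigma_n^{(i)}\cdot n^d|Q|\le 3^4 n^5\cdot n^d|Q|=3^4 n^{d+5}|Q|$.

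The main obstacle I anticipate is not the support statement — that is a clean induction on Fourier-support arithmetic — but getting the combinatorial bookkeeping in the definition of $K_{n_1,n_2}$ and the parity split $2\mid n_1$ versus $2\nmid n_1$ to match the recursion exactly, and then choosing the counting argument so that the constant stays at $3^4 n^5$ rather than degrading to something like $C^n$. The naive bound from iterating the bilinear recursion is exponential; the point is that collinearity of all translations $\eta_i$ collapses the number of distinct achievable sums to polynomial size, and one must phrase the induction to exploit that. I would set up the count on $\#\Sigma_n^{(i)}$ so the inductive hypothesis feeds through the sum over $n_1+n_2=n$ with room to spare.
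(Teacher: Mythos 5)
Your treatment of the support statement is essentially the paper's argument: induction via $\supp\SF_x N^{(1)}(A_{n_1},A_{n_2})\subset\Omega_{n_1}^{(1)}+\Omega_{n_2}^{(2)}$ and $\supp\SF_x N^{(2)}(A_{n_1},A_{n_2})\subset\Omega_{n_1}^{(1)}-\Omega_{n_2}^{(1)}$, the symmetry $-\Sigma_1^{(2)}=\Sigma_1^{(2)}$ to handle the real part, $n_1Q+n_2Q=nQ$ by convexity, and the parity bookkeeping through $2\nmid n_1$ versus $2\mid n_1$. The measure bound $\abs{\Omega_n^{(i)}}\le\#\Sigma_n^{(i)}\cdot n^d\abs{Q}$ is also exactly what the paper does.

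The cardinality bound is where you have not committed to a working argument. The route you label ``safest'' --- closing the recursion $\sum_{n_1+n_2=n}3^4n_1^5\cdot 3^4n_2^5\le 3^4n^5$ --- is false and cannot be repaired: already for $n_1=n_2=n/2$ a single summand is of order $3^8(n/2)^{10}\gg 3^4n^5$, and, as you yourself note earlier, iterating the bilinear recursion on cardinalities inevitably produces exponential (in fact super-polynomial) growth. The only viable argument is the \emph{direct} count from the closed form of $K_{n_1,n_2}$, which you do sketch but then abandon: a sum of $n_1$ elements of $\Sigma_u\cup(-\Sigma_u)\subset\{0,\pm\eta_1,\pm\eta_2\}$ is determined by the two net integer multiplicities of $\eta_1,\eta_2$, each of absolute value at most $n_1$, hence takes at most $(2n_1+1)^2\le(2n_1+1)^3$ values, and a sum of $n_2$ elements of $\Sigma_1^{(2)}\subset\{0,\pm\eta_3\}$ takes at most $2n_2+1$ values; this gives $\#K_{n_1,n_2}\le(2n+1)^4\le 3^4n^4$ and, after the union over the at most $n$ pairs $(n_1,n_2)$, the claimed $\#\Sigma_n^{(i)}\le 3^4n^5$. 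This is precisely the paper's computation. Note also that collinearity of the $\eta_i$ plays no role here --- what matters is only that each block is a sum of elements drawn from a fixed set of at most three generators and their negatives, so the sum depends only on the multiplicity vector. As written, your proposal leaves the key quantitative claim resting on an inequality that fails.
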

\begin{proof}
	The proof follows easily by induction using the observations
	\begin{align*}
		\supp \SF_x A_{n}^{(1)}(t) &\subset \bigcup_{\substack{ n_1+n_2=n}} \supp\SF_x A_{n_1}^{(1)}(t) + \supp\SF_x \Re A_{n_2}^{(2)}(t)\\
		\supp \SF_x A_{n}^{(2)}(t)&\subset \bigcup_{\substack{n_1+n_2=n}} \supp \SF_x A_{n_1}^{(1)}(t) - \supp  \SF_x A_{n_2}^{(1)}(t)
	\end{align*}
	and for the induction start
	\begin{align*}
		\supp \widehat{A_1^{(1)}}(t) &\subset \bigcup\limits_{\eta \in \Sigma_1^{(1)}} (\eta + Q) = \Omega_1^{(1)},\qquad\qquad\,\,\,\,
		\supp \widehat{A_1^{(2)}}(t) \subset \bigcup\limits_{\eta \in \Sigma_1^{(2)}} (\eta + Q)= \Omega_1^{(2)},\\
		\supp \widehat{A_2^{(1)}}(t)&\subset \bigcup\limits_{\substack{ \eta_1 \in \Sigma_1^{(1)} \\ \eta_2 \in \Sigma_1^{(2)} }} (\eta_1+\eta_2 +2Q),\qquad\qquad
        \supp \widehat{A_2^{(2)}}(t)\subset \bigcup\limits_{\eta_1,\eta_2 \in \Sigma_1^{(1)}} (\eta_1-\eta_2 + 2Q).
    \end{align*}
	With the restrictions $\# \Sigma_1^{(1)}\le 3$, $\# \Sigma_1^{(2)} \le 3$ and $-\Sigma_1^{(2)} = \Sigma_1^{(2)}$, the number of possible combinations in $K_{n_1,n_2}$ for $n\in \BN$ and nonnegative $n_1, n_2$ with $n_1+n_2=n$ is at most
\begin{align*}
    \# K_{n_1,n_2}\le (2n_1+1)^3(2n_2+1)\le (2n+1)^4\le (3n)^4.
\end{align*}
Thus $\#\sigma_n^{(1)}, \#\sigma_n^{(2)}\le 3^4\,n^5$ and the claimed Lebesgue measure bounds of those support estimates follow by an easy calculation.
\end{proof}
The fact that the following sequence grows at most exponentially will be helpful while proving $L^2$-bounds for both coordinates of each term of $\sum_{n=1}^{\infty}A_n\icol{u_0\\n_0}$.
\begin{lemma}\label{prop:summe2}
	Define iteratively $b_1^{(1)}= b_1^{(2)}:= 1$ and
	\begin{align*}
		b_n^{(1)}:= \frac{1}{n-1}\sum\limits_{n_1+n_2=n} (n_1 \wedge n_2)^{\tfrac{d+5}{2}} b_{n_1}^{(1)} b_{n_2}^{(2)},\qquad
		b_n^{(2)}:= \frac{n}{n-1}\sum\limits_{n_1+n_2=n} (n_1\wedge n_2)^{\tfrac{d+5}{2}} b_{n_1}^{(1)} b_{n_2}^{(1)}.
	\end{align*}
	There exists a constant $C_b>0$ such that $b_n^{(1)}, b_n^{(2)}\le C_b^n$.
\end{lemma}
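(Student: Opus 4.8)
The plan is to show that both sequences $(b_n^{(1)})$ and $(b_n^{(2)})$ are dominated by the coefficients of a single power series with positive radius of convergence, which immediately gives an exponential bound. First I would set $c_n := \max\{b_n^{(1)}, b_n^{(2)}\}$ and observe that, since $(n_1\wedge n_2)^{(d+5)/2}\le n_1^{(d+5)/2} n_2^{(d+5)/2}$ (or even just $\le n^{(d+5)/2}$, but the factored bound is what makes the generating-function trick clean), and since $\tfrac{n}{n-1}\le 2$ for $n\ge 2$, we get the recursive inequality
\begin{equation*}
	c_n \le \frac{2}{n-1}\sum_{n_1+n_2=n} (n_1\wedge n_2)^{\frac{d+5}{2}} c_{n_1} c_{n_2} \le 2\sum_{\substack{n_1+n_2=n\\ n_1,n_2\ge 1}} n_1^{\frac{d+5}{2}} n_2^{\frac{d+5}{2}} c_{n_1} c_{n_2}
\end{equation*}
for $n\ge 2$, with $c_1=1$.

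Next I would introduce a comparison sequence: define $(a_n)_{n\ge 1}$ by $a_1 := 1$ and $a_n := 2\sum_{n_1+n_2=n} a_{n_1}a_{n_2}$ for $n\ge 2$, but with $a_n$ designed to absorb the polynomial weights — concretely, set $a_n := n^{(d+5)/2} c_n$ and check that $a_n \le 2 n^{(d+5)/2}\sum_{n_1+n_2=n} a_{n_1} a_{n_2}$, then dominate $n^{(d+5)/2}\le 2^n$ to reduce to the clean recursion $\tilde a_1=1$, $\tilde a_n = 2^{n+1}\sum_{n_1+n_2=n}\tilde a_{n_1}\tilde a_{n_2}$. Alternatively, and perhaps more transparently, I would work directly with the generating function $F(z) := \sum_{n\ge 1} c_n z^n$: the inequality above shows that $F$ is majorized (coefficientwise) by the solution $G$ of the functional equation $G(z) = z + 2\,(H\!\cdot\! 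G)(z)^{\star 2}$-type relation, where the polynomial weights $n_i^{(d+5)/2}$ correspond to applying the operator $(z\tfrac{d}{dz})^{(d+5)/2}$; to avoid fractional differential operators I would instead just use the crude bound $(n_1\wedge n_2)^{(d+5)/2}\le n^{(d+5)/2}\le 4^n$ valid for all $n\ge 1$, giving $c_n \le 2\cdot 4^n\sum_{n_1+n_2=n} c_{n_1}c_{n_2}$, and then substitute $c_n = 8^n e_n$ so that $e_1 = 1/8$ and $e_n \le \tfrac{2\cdot 4^n}{8^n}\cdot 8^{n}\, \frac{1}{8^n}\sum e_{n_1}e_{n_2}\cdot 8^{n_1}8^{n_2}/8^n$... — I would clean up the bookkeeping so that $(e_n)$ satisfies a genuine Catalan-type recursion $e_n \le D\sum_{n_1+n_2=n} e_{n_1}e_{n_2}$ with a fixed constant $D$ and $e_1$ small.

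Then the final step is standard: a sequence satisfying $e_1 = \alpha$ and $e_n \le D\sum_{n_1+n_2=n, \, n_i\ge 1} e_{n_1}e_{n_2}$ for $n\ge 2$ is dominated termwise by the coefficients of the solution $y(z)$ to $y = \alpha z + D\,y^2$, namely $y(z) = \tfrac{1 - \sqrt{1 - 4\alpha D z}}{2D}$, whose coefficients are $\alpha (\alpha D)^{n-1} \mathrm{Cat}_{n-1}$ up to constants and in particular satisfy a bound of the form $\tilde C\,(4\alpha D)^{n}$; unwinding the substitutions $c_n = 8^n e_n$ (or whatever the exact constants turn out to be) yields $b_n^{(1)}, b_n^{(2)} \le c_n \le C_b^n$ for a suitable $C_b$. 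The main obstacle is purely bookkeeping: carrying the polynomial factors $(n_1\wedge n_2)^{(d+5)/2}$ and the factor $\tfrac{n}{n-1}$ through the generating-function argument without letting the radius of convergence degenerate; the cleanest route is almost certainly the crude $(n_1\wedge n_2)^{(d+5)/2}\le C^n$ bound plus an exponential rescaling $c_n\mapsto R^{-n}c_n$ chosen large enough that the rescaled recursion has the pure quadratic Catalan form, so that no analytic subtlety remains. One should double-check that the $\tfrac{1}{n-1}$ prefactor (rather than a constant) does not actually help and can simply be bounded above by $1$ for $n\ge 2$, which it can.
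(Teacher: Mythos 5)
There is a genuine gap, and it sits exactly at the point you flag as ``purely bookkeeping.'' The crude bound $(n_1\wedge n_2)^{(d+5)/2}\le n^{(d+5)/2}\le C^n$ followed by an exponential rescaling cannot work, because a quadratic convolution recursion is \emph{invariant} under exponential rescaling: if $c_n=R^n e_n$, then
\begin{equation*}
	c_n\le W_n\sum_{n_1+n_2=n}c_{n_1}c_{n_2}
	\quad\Longleftrightarrow\quad
	R^n e_n\le W_n\,R^{n}\sum_{n_1+n_2=n}e_{n_1}e_{n_2}
	\quad\Longleftrightarrow\quad
	e_n\le W_n\sum_{n_1+n_2=n}e_{n_1}e_{n_2},
\end{equation*}
since $R^{n_1}R^{n_2}=R^n$ cancels exactly. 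So no choice of $R$ turns the weight $W_n=2C^n$ into a fixed constant $D$; the only effect is to shrink $e_1$. Worse, the majorant you would be left with is genuinely super-exponential: keeping only the terms $n_1=1$ in the recursion $c_n=2C^n\sum c_{n_1}c_{n_2}$ already gives $c_n\ge 4C^n c_{n-1}$, hence $c_n\gtrsim 4^{n}C^{n(n+1)/2}$. The same failure occurs for your alternative bound $(n_1\wedge n_2)^{\alpha}\le n_1^{\alpha}n_2^{\alpha}$ with $a_n=n^{\alpha}c_n$: the unbalanced splitting $n_1=1$, $n_2=n-1$ gets weight $(n-1)^{\alpha}$ instead of the true weight $1$, and iterating yields a majorant of size $4^{n}((n-1)!)^{\alpha}$. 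In short, the Catalan-type endgame you describe is fine, but you never reach a recursion of the form $e_n\le D\sum e_{n_1}e_{n_2}$ with constant $D$, and the lemma cannot be recovered from the majorants you construct.

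The information you are discarding is precisely what makes the lemma true: the polynomial weight attaches only to the \emph{smaller} index. The paper's proof (\cref{prop:sequence}) keeps this structure and runs an induction with the ansatz $b_n\le n^{-\alpha-2}\gamma^{n-1}$, $\alpha=\tfrac{d+5}{2}$: writing $k=n_1\wedge n_2\le n/2$, the weight $k^{\alpha}$ is absorbed into the factor $k^{-\alpha-2}$ from the induction hypothesis, leaving $\sum_k k^{-2}\le\pi^2/6$, while the large index contributes $(n-k)^{-\alpha-2}\simeq n^{-\alpha-2}$, reproducing the polynomial decay of the ansatz; the powers of $\gamma$ combine as $\gamma^{n_1-1}\gamma^{n_2-1}=\gamma^{n-2}$, and choosing $\gamma=2^{\alpha+5}C_{\max}$ closes the induction. (You are right on one minor point: the prefactor $\tfrac{1}{n-1}$ is not essential and may be bounded by $1$, since $\sum k^{-2}$ already converges.) If you want to salvage a generating-function formulation, you must likewise exploit the min, e.g.\ via $\sum_{n_1+n_2=n}(n_1\wedge n_2)^{\alpha}c_{n_1}c_{n_2}\le 2\sum_{n_1\le n_2}n_1^{\alpha}c_{n_1}c_{n_2}$, which majorizes $F$ by a solution of $F\le z+2GF$ with $G=\sum n^{\alpha}c_nz^n$ --- but $G$ is a fractional Euler-operator image of $F$, so this route is no simpler than the direct induction.
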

\begin{proof}
    By \cref{prop:sequence} in the appendix with $C_1=C_2=1$, $\alpha:= \frac{d+5}{2}$ and starting values $b_1^{(1)}=b_1^{(2)}=1$, 
    \begin{align*}
        b_n^{(1)}, b_n^{(2)}\le n^{-\tfrac{d+5}{2}-2} \left( 2^{\tfrac{d+5}{2}+5} \right)^{n-1} \le \left( 2^{\tfrac{d+1}{2}+7} \right)^{n-1}.
    \end{align*}
\end{proof}
Now we establish $L^2$ bounds on each term $A_n$. We handle the cases $nQ$ and $\Omega_{n}^{(1)}\setminus nQ$, $\Omega_{n}^{(2)}\setminus nQ$ and the Schrödinger and wave coordinate separately.
\begin{lemma}[$L^2$-bounds]\label{lem:L2bounds}
	Fix $d\in\BN$, $s,l\in \BR$, $n\in \BN$, $T>0$. At any time $0\le t\le T$ the following bounds on the coordinates of the $n$-th addend $A_n$ hold. There exists a constant $C>0$, which depends on $d$ but not on $s, l, t,T,Q, N$ and $n$, such that for any $0<t\le T$
	\begin{align*}
		\norm{\SF_x A_n^{(1)}(t)}_{L^2(nQ)}&\le C^n\abs{Q}^{\tfrac{n-1}{2}}T^{n-1}  \max\limits_{\substack{1\le k\le n\\ 2\nmid k }}\Big\{ N^{\tfrac{k-1}{2}}(\norm{\hat{u}_0}_{L^2})^k (\norm{\hat{n}_0}_{L^2})^{n-k}  \Big\},\\
		\norm{\SF_x A_n^{(1)}(t)}_{L^2((nQ)^c)}&\le C^n\abs{Q}^{\tfrac{n-1}{2}} T^{n-1}  \frac{ \big( N^{\tfrac{1}{2}}\norm{\hat{u}_0}_{L^2(Q^c)}\big)\vee \norm{\hat{n}_0}_{L^2(Q^c)}  }{\big(N^{\tfrac{1}{2}} \norm{\hat{u}_0}_{L^2}\big) \vee \norm{\hat{n}_0}_{L^2}} \max\limits_{\substack{1\le k\le n\\ 2\nmid k }} \Big\{ N^{\tfrac{k-1}{2}}(\norm{\hat{u}_0}_{L^2})^k (\norm{\hat{n}_0}_{L^2})^{n-k}  \Big\},\\
		\norm{\SF_x A_n^{(2)}(t)}_{L^2(nQ)}&\le C^n \abs{Q}^{\tfrac{n-1}{2}}T^{n-1}   \max\limits_{\substack{2\le k\le n\\ 2\mid k}}\Big\{ N^{\tfrac{k}{2}}(\norm{\hat{u}_0}_{L^2})^k (\norm{\hat{n}_0}_{L^2})^{n-k} \Big\},\\
		\norm{\SF_x A_n^{(2)}(t)}_{L^2((nQ)^c)}&\le C^n\abs{Q}^{\tfrac{n-1}{2}}T^{n-1} \frac{ \big(  N^{\tfrac{1}{2}}\norm{\hat{u}_0}_{L^2(Q^c)}\big) \vee \norm{\hat{n}_0}_{L^2(Q^c)} }{\big(N^{\tfrac{1}{2}} \norm{\hat{u}_0}_{L^2}\big)\vee \norm{\hat{n}_0}_{L^2} } \max\limits_{\substack{2\le k\le n\\ 2\mid k}}   \Big\{ N^{\tfrac{k}{2}}(\norm{\hat{u}_0}_{L^2})^k (\norm{\hat{n}_0}_{L^2})^{n-k}\Big\}
	\end{align*}
	with obvious modifications for $n=1$. 
\end{lemma}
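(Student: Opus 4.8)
The estimates are proven by strong induction on $n$, treating all four bounds simultaneously. For $n=1$ the claims reduce to $\norm{\SF_x A_1^{(1)}(t)}_{L^2(Q)} = \norm{\hat u_0}_{L^2}$ and similar identities (with $(1Q)^c$ replaced by $Q^c$), which hold since $e^{-it\abs{\xi}^2}$ and $\cos(t\abs{\xi})$ are unimodular, so $C$ only needs to be chosen $\ge 1$ for the induction start. For the inductive step, I would expand $A_n$ via the defining recursion \eqref{eq:defAn}: the Schrödinger coordinate $A_n^{(1)}$ is a sum over $n_1+n_2=n$ of Duhamel integrals of $\Re(A_{n_2}^{(2)})\,A_{n_1}^{(1)}$, and the wave coordinate $A_n^{(2)}$ is a sum of Duhamel integrals of $\abs{\nabla}(A_{n_1}^{(1)}\overline{A_{n_2}^{(1)}})$. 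On the Fourier side, the propagators $e^{-i(t-s)\abs{\xi}^2}$, $e^{-i(t-s)\abs{\xi}}$ are unimodular, so $\norm{\SF_x A_n^{(1)}(t)}_{L^2(X)} \le \sum_{n_1+n_2=n}\int_0^t \norm{\1_X\big(\widehat{A_{n_1}^{(1)}}(s) \ast \widehat{\Re A_{n_2}^{(2)}}(s)\big)}_{L^2}\,ds$ for any measurable $X$, and analogously for the wave part with an extra factor $\abs{\xi}$.

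\textbf{Controlling the convolution.} The key mechanism is that $\widehat{A_{n_1}^{(1)}}(s)$ and $\widehat{A_{n_2}^{(2)}}(s)$ are supported in $\Omega_{n_1}^{(1)}$, $\Omega_{n_2}^{(2)}$ by \cref{lem:traeger2}, each a union of at most $3^4 (n_1\wedge n_2 \vee 1)^5$-many (really: $3^4 n_i^5$, but the relevant bookkeeping factor is the smaller index) translates of $n_i Q$. For a convolution $f\ast g$ with $f\in L^2$ supported in a set that is a union of $M$ translates of a cube of measure $\abs{nQ}$ and $g\in L^1$, one has $\norm{f\ast g}_{L^2}\le \norm{f}_{L^2}\norm{g}_{L^1}$; but we only control $L^2$ norms of both factors, so instead I would use $\norm{f\ast g}_{L^2} \le \norm{g}_{L^2}\,\norm{f}_{L^1} \le \norm{g}_{L^2}\,\abs{\supp f}^{1/2}\norm{f}_{L^2}$, placing the $L^1$ estimate on the factor with the smaller index $n_1\wedge n_2$ (so that the measure factor is $\big(3^4 (n_1\wedge n_2)^{d+5}\abs{Q}\big)^{1/2}$ from \cref{lem:traeger2}, which is where the exponent $\tfrac{d+5}{2}$ in $b_n$ comes from). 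The $s$-integral over $[0,t]\subset[0,T]$ just contributes a factor $T$, since after taking absolute values the phases disappear. For the wave coordinate one must also bound the multiplier $\abs{\xi}$: on $\Omega_n^{(2)}$ (or any translate of $nQ$ inside it) one has $\abs{\xi}\lesssim N$ because all translation vectors are $\lesssim N$ and the cube has side $\le N$, which produces the factor $N^{1/2}$ that upgrades the parity count from odd $k$ to even $k$ in passing from $A_{n_1}^{(1)},A_{n_2}^{(1)}$ to $A_n^{(2)}$; tracking this carefully against $\Re A_{n_2}^{(2)}$ having the same support and $L^2$ profile as $A_1^{(2)}$-type terms is what forces the precise max over admissible $k$ with the stated parity.

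\textbf{Closing the induction and the off-cube bounds.} Plugging the inductive hypotheses into the convolution estimate and collecting constants, the numerical factors assemble exactly into the recursion defining $b_n^{(1)}, b_n^{(2)}$ from \cref{prop:summe2} — the $\tfrac{1}{n-1}$ and $\tfrac{n}{n-1}$ there are absorbed because $\#\{(n_1,n_2): n_1+n_2=n\} = n-1$ and the extra $n$ in $b_n^{(2)}$ accounts for the $\abs{\xi}\lesssim N$-type counting — and \cref{prop:summe2} gives $b_n^{(i)}\le C_b^n$, so the whole prefactor is $\le C^n$ for a suitable $d$-dependent $C$. The powers $\abs{Q}^{(n-1)/2}$ and $T^{n-1}$ count the number of Duhamel integrations ($n-1$ of them in a depth-$n$ tree) and the number of $L^1\hookrightarrow L^2$ measure factors. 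For the $(nQ)^c$ estimates, the extra ratio $\big((N^{1/2}\norm{\hat u_0}_{L^2(Q^c)})\vee\norm{\hat n_0}_{L^2(Q^c)}\big)\big/\big((N^{1/2}\norm{\hat u_0}_{L^2})\vee\norm{\hat n_0}_{L^2}\big)$ is propagated by noting that a product $A_{n_1}\cdot A_{n_2}$ (or its Duhamel integral) lands outside $nQ$ only if at least one factor's frequency support meets $Q^c$ — i.e.\ the innermost $A_1$ leaf feeding that branch is an $L^2(Q^c)$-piece — so in each term of the expansion one of the $n$ leaves carries the $Q^c$-localized $L^2$ norm while the remaining $n-1$ carry full $L^2$ norms; dividing and multiplying by the stated denominator makes this bookkeeping uniform. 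The main obstacle is the careful combinatorial accounting in this last step: one must verify that the ``at least one $Q^c$-leaf'' structure survives every convolution and every use of $\norm{f\ast g}_{L^2}\le \norm{f}_{L^1}\norm{g}_{L^2}$ without the ratio degrading (e.g.\ the $L^1$ factor might be the $Q^c$-leaf, in which case one needs $\abs{\supp f\cap Q^c}$-type control, handled by the same support lemma), and that the parity constraints on $k$ are respected when the $Q^c$-leaf sits in the Schrödinger versus the wave sub-tree.
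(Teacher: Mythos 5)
Your proposal follows essentially the same route as the paper's proof: induction on $n$, Young's inequality with the $L^1$ norm placed on the factor of smaller index and converted to an $L^2$ norm via Cauchy--Schwarz together with the support-measure bound of \cref{lem:traeger2}, the bound $\abs{\xi}\lesssim nN$ for the wave multiplier (which produces the shift from $N^{\tfrac{k-1}{2}}$ with odd $k$ to $N^{\tfrac{k}{2}}$ with even $k$), the recursion of \cref{prop:summe2} to keep the accumulated combinatorial constants exponential in $n$, and the observation that landing in $(nQ)^c$ forces at least one convolution factor to contribute frequencies outside its own cube, which propagates the $Q^c$-ratio. One bookkeeping caveat: the factor $\tfrac{1}{n-1}$ in the definition of $b_n^{(1)}$ is \emph{not} cancelled against the number of pairs $(n_1,n_2)$ with $n_1+n_2=n$ — that would require the fixed constant $c$ to dominate $n-1$ — rather, in the paper it arises from proving the bound with $t^{m-1}$ in place of $T^{m-1}$ and computing $\int_0^t s^{n-2}\,\di s = t^{n-1}/(n-1)$; your cruder estimate of the time integral by a single factor of $T$ still works, but it leaves you with the recursion \emph{without} the $\tfrac{1}{n-1}$, for which one must recheck (by the same $n^{-\alpha-2}$ ansatz as in \cref{prop:sequence}) that the growth remains exponential.
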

\begin{proof}
	Instead of proving the above inequalities directly, we show the statement with $C^n$ replaced by $c^{n-1}\,b_n^{(1)}$ for the first coordinate and $c^{n-1}\,b_n^{(2)}$ for the second coordinate from \cref{prop:summe2} and $c=3^3(d+1)$. We proceed inductively. The case $n=1$ is trivial. To allow for a better overview we define for short $a_{1}^{(1)}:= \norm{\hat{u}_0}_{L^2}$, $a_{1,c}^{(1)}:= \norm{\hat{u}_0}_{L^2(Q^c)}$, $a_{1}^{(2)}:= \norm{\hat{n}_0}_{L^2}$ and $a_{1,c}^{(2)}:= \norm{\hat{n}_0}_{L^2(Q^c)}$ and for $n\ge 2$
	\begin{align*}
		a_{n}^{(1)}&:= \max\limits_{\substack{1\le k\le n\\ 2\nmid k }}\Big\{ N^{\tfrac{k-1}{2}}(\norm{\hat{u}_0}_{L^2})^k (\norm{\hat{n}_0}_{L^2})^{n-k}  \Big\}, \quad a_{n}^{(2)}:= \max\limits_{\substack{2\le k\le n\\ 2\mid k}}\Big\{ N^{\tfrac{k}{2}}(\norm{\hat{u}_0}_{L^2})^k (\norm{\hat{n}_0}_{L^2})^{n-k} \Big\},\\
		a_{n,c}^{(1)}&:= \frac{ \max\{ N^{\tfrac{1}{2}}\norm{\hat{u}_0}_{L^2(Q^c)},\norm{\hat{n}_0}_{L^2(Q^c)} \}}{\max\{N^{\tfrac{1}{2}} \norm{\hat{u}_0}_{L^2},\norm{\hat{n}_0}_{L^2} \}} \max\limits_{\substack{1\le k\le n\\ 2\nmid k }}\Big\{ N^{\tfrac{k-1}{2}}(\norm{\hat{u}_0}_{L^2})^k (\norm{\hat{n}_0}_{L^2})^{n-k}  \Big\},\\
		a_{n,c}^{(2)}&:= \frac{ \max\{ N^{\tfrac{1}{2}} \norm{\hat{u}_0}_{L^2(Q^c)},\norm{\hat{n}_0}_{L^2(Q^c)} \}}{\max\{N^{\tfrac{1}{2}} \norm{\hat{u}_0}_{L^2},\norm{\hat{n}_0}_{L^2} \}} \max\limits_{\substack{2\le k\le n\\ 2\mid k}}\Big\{ N^{\tfrac{k}{2}}(\norm{\hat{u}_0}_{L^2})^k (\norm{\hat{n}_0}_{L^2})^{n-k}\Big\}.
	\end{align*}
	We begin by making two observations. Firstly, $a_{n,c}^{(1)}\le a_n^{(1)} $ and $a_{n,c}^{(2)} \le a_{n}^{(2)}$ for any $n\in \BN$ and we notice by simple case distinction that for natural numbers $n_1, n_2, n$ with $n_1+n_2 =n$
	\begin{align*}
		a_{n_1}^{(1)}a_{n_2}^{(2)} &\le a_n^{(1)}, &&&
		a_{n_1}^{(1)}a_{n_2,c}^{(2)} &\le a_{n,c}^{(1)}, &&& 	a_{n_1,c}^{(1)}a_{n_2}^{(2)}&\le a_{n,c}^{(1)}, \\
		N a_{n_1}^{(1)}a_{n_2}^{(1)}&\le  a_{n}^{(2)}, &&&  N a_{n_1}^{(1)}a_{n_2,c}^{(1)}&\le a_{n,c}^{(2)} .
	\end{align*}  
	 Now suppose the claim is true for $n-1\in \BN$. Thereby,
	\begin{align*}
		\norm{\SF_x A_n^{(1)}(t)}_{L^2(nQ)}&\le  \sum\limits_{\substack{n_1,n_2\ge 1\\ n_1+n_2=n}} \int_{0}^{T}\norm{\widehat{A_{n_1}^{(1)}}(\tilde{t})\ast\widehat{\Re A_{n_2}^{(2)}}(\tilde{t})}_{L^2(nQ)} \di \tilde{t}.\\
		\intertext{Now we apply Young inequality with the smaller index $n_1, n_2$ in $L^1$ and Hölder inequality.}
		&\le  \sum\limits_{\substack{n_1,n_2\ge 1\\ n_1+n_2=n}} \abs{\Omega_{\min\{ n_1,n_2 \}}}^{1/2}\int_{0}^{T}  \norm{\widehat{A_{n_1}^{(1)}}(\tilde{t})}_{L^2}\norm{\widehat{A_{n_2}^{(2)}}(\tilde{t})}_{L^2} \di \tilde{t}.\\
		\intertext{Further, \cref{lem:traeger2} and the induction assumption yield}
		\norm{\SF_x A_n^{(1)}(t)}_{L^2(nQ)}&\le \sum\limits_{\substack{n_1,n_2\ge 1\\ n_1+n_2=n}} 3^2 (\min\{ n_1,n_2\})^{\tfrac{d+5}{2}} a_{n_1}^{(1)} b_{n_1}^{(1)} a_{n_2}^{(2)} b_{n_2}^{(2)} c^{n-2} \abs{Q}^{\tfrac{n-1}{2}}\int_{0}^{T} {\tilde{t}}^{n-2} \di \tilde{t} \\
		&\le  c^{n-1} b_{n}^{(1)} \abs{Q}^{\tfrac{n-1}{2}}T^{n-1} a_{n}^{(1)} .
	\end{align*}
	For $\norm{\SF_x A_n^{(1)} (t) }_{L^2(nQ^c)}$ we proceed similarly. The only difference is an additional distinction in the application of Young inequality into $nQ$ and $nQ^c$. We demonstrate this in the wave coordinate.
	\begin{equation*}
		\norm{\SF_x A_n^{(2)}(t)}_{L^2(nQ^c)}\le  \sum\limits_{\substack{n_1,n_2\ge 1\\ n_1+n_2=n}} \int_{0}^{T} \norm{\abs{\cdot}\,\,\widehat{A_{n_1}^{(1)}}(\tilde{t})\ast\widehat{\overline{A_{n_2}^{(1)}}}(\tilde{t})}_{L^2(nQ^c)} \di \tilde{t}.
	\end{equation*}
	We continue by estimating $\abs{\cdot}$ by its maximum in $\Omega_{n}^{(2)}$. Thereafter, we apply Young with the smaller index in $L^1$ as done above. Notice for the convolution to be in $\Omega_{n}^{(2)} \setminus nQ$ it is necessary that at least one term contributes high frequencies from $nQ^c$. Thus, after Hölder inequality we receive the following upper bound.
	\begin{align*}
		\norm{\SF_x A_n^{(2)}(t)}_{L^2(nQ^c)}&\le  \sum\limits_{\substack{n_1,n_2\ge 1\\ n_1+n_2=n}} \abs{\Omega_{\min\{ n_1,n_2 \}}}^{1/2} (d+1)nN \int_{0}^{T} \Bigg( \norm{\widehat{A_{n_1}^{(1)}}(\tilde{t})}_{L^2(nQ^c)}\norm{\widehat{A_{n_2}^{(1)}}(\tilde{t}, -\cdot)}_{L^2(nQ^c)} \\
		&\qquad\quad + \norm{\widehat{A_{n_1}^{(1)}}(\tilde{t})}_{L^2(nQ)}\norm{\widehat{A_{n_2}^{(1)}}(\tilde{t}, -\cdot)}_{L^2(nQ^c)}+\norm{\widehat{A_{n_1}^{(1)}}(\tilde{t})}_{L^2(nQ^c)}\norm{\widehat{A_{n_2}^{(1)}}(\tilde{t}, -\cdot)}_{L^2(nQ)}\Bigg)\di \tilde{t}.\\
		\intertext{\cref{lem:traeger2} and the induction assumption yield}
		\norm{\SF_x A_n^{(2)}(t)}_{L^2(nQ^c)}&\le \sum\limits_{\substack{n_1,n_2\ge 1\\ n_1+n_2=n}} 3^2 (\min\{ n_1,n_2\})^{\tfrac{d+5}{2}} (d+1)nN \big( a_{n_1,c}^{(1)}  a_{n_2,c}^{(1)}  +a_{n_1}^{(1)}  a_{n_2,c}^{(1)} +a_{n_1,c}^{(1)}  a_{n_2}^{(1)}  \big)\\
		&\qquad \qquad \cdot b_{n_1}^{(1)}b_{n_2}^{(1)} c^{n-2} \abs{Q}^{\tfrac{n-1}{2}}\int_{0}^{T} \tilde{t}^{n-2} \di \tilde{t} \le c^{n-1} b_{n}^{(2)} \abs{Q}^{\tfrac{n-1}{2}} T^{n-1} a_{n,c}^{(2)}.
	\end{align*}
	Again the estimate on $\norm{\SF_x A_n^{(2)}(t)}_{L^2(nQ)}$ is quite similar except that the case distinction before applying Young's inequality is not needed. Thus by \cref{prop:summe2}, the statement is proven.
\end{proof}
Now we use these $L^2$ bounds to derive Sobolev bounds of each term of the series expansion. 
\begin{lemma}[Sobolev bounds]\label{lem:Sobolevbounds}
	Fix any $d\in\BN$, $s,l\in \BR$, $n\ge 2$, $T>0$. At any time $0\le t\le T$ the following bounds on the coordinates of the $n$-th addend $A_n$ hold. There exists a constant $C>0$, which depends only on $d$ but not on $s, l, t,T,Q, N$ and $n$, such that
	\begin{align*}
		\norm{A_n^{(1)}(t)}_{H^s}&\le C^n\abs{Q}^{\tfrac{n}{2}-1}T^{n-1} \, \max\limits_{\substack{1\le k\le n\\ 2\nmid k }}\Big\{ N^{\tfrac{k-1}{2}}(\norm{\hat{u}_0}_{L^2})^k (\norm{\hat{n}_0}_{L^2})^{n-k} \Big\}\\
		&\qquad\qquad \cdot \Bigg(  \norm{\wop{\cdot}^s}_{L^2(nQ)} + \norm{\wop{\cdot}^s}_{L^2(\Omega_{n}^{(1)}\setminus nQ)}\,\frac{ \max\{ N^{\tfrac{1}{2}}\norm{\hat{u}_0}_{L^2(nQ^c)},\norm{\hat{n}_0}_{L^2(nQ^c)} \}}{\max\{N^{\tfrac{1}{2}} \norm{\hat{u}_0}_{L^2},\norm{\hat{n}_0}_{L^2} \}}  \Bigg),\\
		\norm{A_n^{(2)}(t)}_{H^l}&\le C^n \abs{Q}^{\tfrac{n}{2}-1}T^{n-1} \,  \max\limits_{\substack{2\le k\le n\\ 2\mid k}}\Big\{ N^{\tfrac{k}{2}-1}(\norm{\hat{u}_0}_{L^2})^k (\norm{\hat{n}_0}_{L^2})^{n-k} \Big\}\\
		&\qquad\qquad \cdot \Bigg(  \norm{\wop{\cdot}^l\abs{\cdot}}_{L^2(nQ)} + \norm{\wop{\cdot}^l\abs{\cdot}}_{L^2(\Omega_{n}^{(2)}\setminus nQ)}\frac{ \max\{ N^{\tfrac{1}{2}}\norm{\hat{u}_0}_{L^2(nQ^c)},\norm{\hat{n}_0}_{L^2(nQ^c)} \}}{\max\{N^{\tfrac{1}{2}} \norm{\hat{u}_0}_{L^2},\norm{\hat{n}_0}_{L^2} \}}  \Bigg).
	\end{align*}
\end{lemma}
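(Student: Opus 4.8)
No fresh induction is needed here: the bound follows by peeling off one Duhamel layer from the recursion \eqref{eq:defAn} and feeding in the $L^2$-estimates of \cref{lem:L2bounds}. Since $\norm{A_n^{(1)}(t)}_{H^s}=\norm{\wop{\cdot}^s\,\SF_x A_n^{(1)}(t)}_{L^2}$ and, by \cref{lem:traeger2}, $\SF_x A_n^{(1)}(t)$ is supported in $\Omega_n^{(1)}$, I split
\[
	\norm{A_n^{(1)}(t)}_{H^s}\le \norm{\wop{\cdot}^s\,\SF_x A_n^{(1)}(t)}_{L^2(nQ)}+\norm{\wop{\cdot}^s\,\SF_x A_n^{(1)}(t)}_{L^2(\Omega_n^{(1)}\setminus nQ)}
\]
and on each piece apply Hölder's inequality with the weight $\wop{\cdot}^s$ in $L^2$ and $\SF_x A_n^{(1)}(t)$ in $L^\infty$; this already produces the factors $\norm{\wop{\cdot}^s}_{L^2(nQ)}$ and $\norm{\wop{\cdot}^s}_{L^2(\Omega_n^{(1)}\setminus nQ)}$ from the statement, so the problem reduces to $L^\infty$-estimates on $\SF_x A_n^{(1)}(t)$. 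The wave coordinate is handled the same way after writing $\SF_x A_n^{(2)}(t,\xi)=\abs{\xi}\,g_n(t,\xi)$, where $g_n$ is the $\abs{\nabla}$-free analogue built from the convolutions $\widehat{A_{n_1}^{(1)}}\ast\widehat{\overline{A_{n_2}^{(1)}}}$, and keeping $\abs{\xi}$ together with the Bessel weight (this is what converts $\wop{\cdot}^l$ into $\wop{\cdot}^l\abs{\cdot}$); one then needs an $L^\infty$-bound on $g_n$.

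For the $L^\infty$-bound I would use
\[
	\SF_x A_n^{(1)}(t)=-i\!\!\sum_{n_1+n_2=n}\int_0^t e^{-i(t-s)\abs{\xi}^2}\big(\widehat{A_{n_1}^{(1)}}(s)\ast\widehat{\Re A_{n_2}^{(2)}}(s)\big)\,\di s,
\]
then Young's inequality $\norm{f\ast g}_{L^\infty}\le\norm{f}_{L^2}\norm{g}_{L^2}$ together with the time-pointwise form of \cref{lem:L2bounds} on all of $\BR^d$, namely $\norm{\widehat{A_m^{(1)}}(s)}_{L^2}\le C^m\abs{Q}^{(m-1)/2}s^{m-1}a_m^{(1)}$ (obtained by adding the $mQ$- and $(mQ)^c$-parts and using $a^{(j)}_{m,c}\le a^{(j)}_m$). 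The integral $\int_0^t s^{n_1-1}s^{n_2-1}\,\di s=t^{n-1}/(n-1)\le T^{n-1}/(n-1)$ supplies $T^{n-1}$ and a $1/(n-1)$ absorbing the number of summands, while the bookkeeping inequality $a_{n_1}^{(1)}a_{n_2}^{(2)}\le a_n^{(1)}$ from the proof of \cref{lem:L2bounds} collapses the maxima, giving $\norm{\SF_x A_n^{(1)}(t)}_{L^\infty}\le C^n\abs{Q}^{n/2-1}T^{n-1}a_n^{(1)}$ — exactly the coefficient of $\norm{\wop{\cdot}^s}_{L^2(nQ)}$. For $g_n$ the same computation, now with $N\,a_{n_1}^{(1)}a_{n_2}^{(1)}\le a_n^{(2)}$, produces the extra $N^{-1}$, hence the exponent $N^{k/2-1}$.

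The delicate point — where I would spend the most care — is the $L^\infty$-bound on the off-center part $\Omega_n^{(1)}\setminus nQ$, from which the decaying ratio $a^{(1)}_{n,c}/a^{(1)}_n$ (as in \cref{lem:L2bounds}) must be extracted. Since $n_1Q+n_2Q=nQ$, for $\xi\notin nQ$ the convolution $\widehat{A_{n_1}^{(1)}}(s)\ast\widehat{\Re A_{n_2}^{(2)}}(s)$ at $\xi$ is supported where $\eta\notin n_1Q$ or $\xi-\eta\notin n_2Q$; splitting the $\eta$-integral accordingly, bounding the offending factor by the $L^2((n_jQ)^c)$-estimate of \cref{lem:L2bounds} and the other by the global $L^2$-bound, and using $a^{(1)}_{n_1,c}a^{(2)}_{n_2}\le a^{(1)}_{n,c}$, $a^{(1)}_{n_1}a^{(2)}_{n_2,c}\le a^{(1)}_{n,c}$ (and $N\,a^{(1)}_{n_1}a^{(1)}_{n_2,c}\le a^{(2)}_{n,c}$ for the wave coordinate) reproduces the coefficient $a^{(1)}_{n,c}$, i.e.\ the ratio times $a_n^{(1)}$. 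Pinning down the precise cut-off region here — and thereby matching $Q^c$ against $nQ^c$ — is the one spot that requires looking carefully at the geometry of the translates in $\Sigma_1^{(1)},\Sigma_1^{(2)}$. Adding the $nQ$- and off-center contributions and renaming the constant completes the proof.
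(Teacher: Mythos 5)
Your proposal is correct and follows essentially the same route as the paper: Hölder with the Bessel weight in $L^2$ and $\SF_x A_n$ in $L^\infty$ on the regions $nQ$ and $\Omega_n^{(1)}\setminus nQ$, then one Duhamel layer plus Young's inequality $L^2\ast L^2\to L^\infty$ fed by \cref{lem:L2bounds}, with the off-center case split according to which factor carries frequencies outside its own cube. The only cosmetic difference is that the paper bounds the time integral crudely by $T\cdot T^{n_1-1}T^{n_2-1}$ and absorbs the $O(n)$ count of summands into $C^n$ rather than using the exact $t^{n-1}/(n-1)$, which changes nothing.
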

\begin{proof}
	Since $e^{it\,\lap}$ and $e^{-it\abs{\nabla}}$ are unitary in $L^2$, we calculate employing Hölder and Young inequality
	\begin{align*}
		\norm{A_n^{(1)}(t)}_{H^s}&\le \sqrt{2}\norm{\wop{\cdot}^s}_{L^2(nQ)}\norm{\SF_x A_n^{(1)}(t)}_{L^\infty(nQ)}+\sqrt{2} \norm{\wop{\cdot}^s}_{L^2(nQ^c)}\norm{\SF_x A_n^{(1)}(t)}_{L^\infty(nQ^c)}\\
		&\le \sqrt{2}\norm{\wop{\cdot}^s}_{L^2(nQ)}T\sum\limits_{\substack{ n_1+n_2=n\\ 1\le n_1, n_2 \le n}} \norm{\SF_x A_{n_1}^{(1)}(t) \ast\SF_x A_{n_2}^{(2)}(t)   }_{L^\infty(nQ)}\\
		&\quad+ \sqrt{2}\norm{\wop{\cdot}^s}_{L^2(nQ^c)}T\sum\limits_{\substack{ n_1+n_2=n\\ 1\le n_1, n_2 \le n}} \norm{\SF_x A_{n_1}^{(1)}(t) \ast\SF_x A_{n_2}^{(2)}(t)   }_{L^\infty(nQ^c)}\\
		&\le \sqrt{2}\norm{\wop{\cdot}^s}_{L^2(nQ)}T\sum\limits_{\substack{ n_1+n_2=n\\ 1\le n_1, n_2 \le n}} \norm{\SF_x A_{n_1}^{(1)}(t)}_{L^2} \norm{\SF_x A_{n_2}^{(2)}  (t)}_{L^2}\\
		&\quad+ \sqrt{2}\norm{\wop{\cdot}^s}_{L^2(nQ^c)}T\sum\limits_{\substack{ n_1+n_2=n\\ 1\le n_1, n_2 \le n}}\Big(\norm{\SF_x A_{n_1}^{(1)}(t)}_{L^2(n_1 Q^c)} \norm{\SF_x A_{n_2}^{(2)}(t)  }_{L^2(n_2 Q)}\\
		&\qquad\quad+\norm{\SF_x A_{n_1}^{(1)}(t)}_{L^2(n_1 Q)} \norm{\SF_x A_{n_2}^{(2)}(t)  }_{L^2(n_2 Q^c)}+\norm{\SF_x A_{n_1}^{(1)}(t)}_{L^2(n_1 Q^c)} \norm{\SF_x A_{n_2}^{(2)} (t) }_{L^2(n_2 Q^c)}\Big).
	\end{align*}
	By \cref{lem:L2bounds}, we receive the desired bound with an additional factor $\sqrt{2}\,3 n$ which we estimate by $(3e)^n$ and absorb it into $C^n$. The proof of the second inequality works exactly the same and we omit it. 
\end{proof}
\begin{lem}[Bessel potential $L^2$-estimates]\label{lem:besselpot_estimates}
		Let $\Omega_{n}^{(1)}, \Omega_{n}^{(2)}$ be the supersets of the supports of the Schrödinger coordinate and resp. wave coordinate of the $n$-th term as given by \cref{lem:traeger2}. For any $(s,l)\in \BR^2$
		\begin{align*}
			\norm{\wop{\cdot}^{s}}_{L^2(\Omega_{n}^{(1)}) } &\le3^2 \big(2^{\tfrac{d+5}{2}+\abs{s}}\big)^n \norm{\wop{\cdot}^{s}}_{L^2(Q\cup (Ne_1+Q))},\\
			\norm{\wop{\cdot}^{l} \, \abs{\cdot}}_{L^2(\Omega_{n}^{(2)}) } &\le 3^2 \big(2^{\tfrac{d+5}{2}+\abs{l}+1}\big)^n \norm{\wop{\cdot}^{l}\,\abs{\cdot}}_{L^2(Q\cup (Ne_1+Q))}.
		\end{align*}
\end{lem}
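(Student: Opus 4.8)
The plan is a covering argument, split according to the sign of the Sobolev exponent; I treat only the Schrödinger coordinate, the wave one being identical modulo one observation noted at the end. By \cref{lem:traeger2} we have $\Omega_n^{(1)}=\bigcup_{\eta\in\Sigma_n^{(1)}}(\eta+nQ)$ with $\#\Sigma_n^{(1)}\le 3^4 n^5$, so
\begin{equation*}
\norm{\wop{\cdot}^{s}}_{L^2(\Omega_n^{(1)})}^2\le\sum_{\eta\in\Sigma_n^{(1)}}\int_{\eta+nQ}\wop{\xi}^{2s}\,d\xi\le 3^4 n^5\max_{\eta\in\Sigma_n^{(1)}}\int_{\eta+nQ}\wop{\xi}^{2s}\,d\xi.
\end{equation*}
Each $\eta+nQ$ is, up to a null set, the disjoint union of $n^d$ axis-parallel translates $c+Q$ of $Q$; and since every $\eta\in\Sigma_n^{(1)}$ is a sum of at most $n$ of the basic frequencies $0,\pm\eta_1,\pm\eta_2,\pm\eta_3$, all lying on the $e_1$-axis with $\abs{\eta_i}\lesssim N$, while the standing assumption $0<A\le N$ confines $nQ$ to a box of side $\lesssim_d nN$, each such centre obeys $\abs{c}\le C_d\,nN$ with $C_d$ depending only on $d$. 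Thus it suffices to bound $\int_{c+Q}\wop{\xi}^{2s}\,d\xi$ for $\abs{c}\le C_d nN$ by $(C_d' n)^{2\abs s}$ times $\norm{\wop{\cdot}^{s}}_{L^2(Q\cup(Ne_1+Q))}^2$, and then to package the resulting polynomial factors.

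For the core estimate I distinguish two cases. If $s<0$, then in each variable separately the map $\xi_i\mapsto\wop{\xi}^{2s}$ is even and non-increasing in $\abs{\xi_i}$; since the integral of such a function over an interval of fixed length is maximal when the interval is centred at the origin, applying this successively in $\xi_1,\dots,\xi_d$ by Fubini gives $\int_{c+Q}\wop{\xi}^{2s}\,d\xi\le\int_{Q}\wop{\xi}^{2s}\,d\xi$ for every $c$, with no loss whatsoever. If $s\ge 0$ I estimate directly: on $c+Q$ one has $\wop{\xi}^2\le 1+\big(\abs c+\tfrac12\operatorname{diam}(Q)\big)^2\le 2(C_d' nN)^2$, while on $Ne_1+Q$ the first coordinate of $\xi$ lies in $(N-\tfrac A2,\,N+\tfrac A2)\subset(\tfrac N2,\tfrac{3N}2)$ — again using $A\le N$ — so $\wop{\xi}^2\ge \tfrac14 N^2$; dividing these bounds and integrating over $Q$ yields $\int_{c+Q}\wop{\xi}^{2s}\,d\xi\le (C_d' n)^{2s}\int_{Ne_1+Q}\wop{\xi}^{2s}\,d\xi$. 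In either case $\int_{c+Q}\wop{\xi}^{2s}\,d\xi\le (C_d' n)^{2\abs s}\norm{\wop{\cdot}^{s}}_{L^2(Q\cup(Ne_1+Q))}^2$.

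Combining the three steps, $\norm{\wop{\cdot}^{s}}_{L^2(\Omega_n^{(1)})}^2\le 3^4 n^{d+5}(C_d' n)^{2\abs s}\norm{\wop{\cdot}^{s}}_{L^2(Q\cup(Ne_1+Q))}^2$, and it remains to check that $3^4 n^{d+5}(C_d'n)^{2\abs s}\le\big(3^2(2^{(d+5)/2+\abs s})^n\big)^2$ uniformly in $s,n,N,Q$. The inequality $m\le 2^{m-1}$ gives $n^{d+5}\le 2^{(d+5)(n-1)}$ and $n^{2\abs s}\le 2^{2\abs s(n-1)}$, which settles all but finitely many $n$; for the few remaining small $n$ one uses, in place of the crude estimate $\#\Sigma_n^{(1)}\le 3^4 n^5$, the sharper cardinality bounds ($\#K_{n_1,n_2}\le(2n_1+1)^3(2n_2+1)$, or even $\le 5^{n_1}3^{n_2}$) from the proof of \cref{lem:traeger2}, which comfortably fit inside $(2^{(d+5)/2})^{2n}$ and leave the entire factor $(2^{\abs s})^{2n}$ free for the weight.

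Finally, the wave coordinate is the same argument after two remarks: from above, $\abs{\xi}^2\le\wop{\xi}^2$ bounds the weight $\wop{\xi}^{2l}\abs{\xi}^2$ by $\wop{\xi}^{2(l+1)}$, which is exactly why the exponent of $2$ carries the extra $+1$; and from below, $\abs{\xi}\ge\tfrac N2$ on $Ne_1+Q$ (once more by $A\le N$) gives $\int_{Ne_1+Q}\wop{\xi}^{2l}\abs{\xi}^2\,d\xi\gtrsim(\tfrac N2)^{2(l+1)}\abs{Q}$ when $l\ge -1$, while for $l<0$ the Fubini–monotonicity argument applies to each radially monotone piece of $\wop{\xi}^{2l}\abs{\xi}^2$. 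The step I expect to be the real work is the packaging in the third paragraph: the constant $C_d'$ coming out of the $s\ge 0$ comparison is a little larger than one would wish, so one must genuinely exploit the sharper small-$n$ cardinality bounds rather than the blanket $3^4 n^5$ estimate.
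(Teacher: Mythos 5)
Your overall strategy --- cover $\Omega_n^{(1)}$ by translates of $Q$, compare the weight on each translate with its value on $Q\cup(Ne_1+Q)$ by monotonicity, and count --- is the same as the paper's. The only structural difference is that you tile $\eta+nQ$ by $n^d$ copies of $Q$, whereas the paper substitutes $\xi\mapsto n\xi$ to map $\eta+nQ$ onto $\eta/n+Q$; both devices cost the same factor $n^{d+2\abs{s}}$. Your $s<0$ case (coordinatewise symmetric-decreasing comparison giving $\int_{c+Q}\wop{\cdot}^{2s}\le\int_{Q}\wop{\cdot}^{2s}$ with no loss) is fine, and in fact slightly cleaner than what the paper writes.

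The soft spot is the $s\ge 0$ case together with the final packaging. Comparing $\sup_{c+Q}\wop{\cdot}^{2s}$ with $\inf_{Ne_1+Q}\wop{\cdot}^{2s}$ produces a factor $(C_d'n)^{2s}$ with $C_d'$ strictly larger than $2$ (tracking your own bounds gives roughly $C_d'\simeq\sqrt{2}\,(2C_d+\sqrt{d})$), and for $n=1$ and $\abs{s}$ large this cannot be absorbed into $(2^{\abs{s}})^{2n}=2^{2\abs{s}}$: you would need $C_d'\le 2$. Your proposed repair --- sharper cardinality bounds for small $n$ --- does not touch this factor, because it is attached to the weight rather than to $\#\Sigma_n^{(1)}$ and grows without bound in $\abs{s}$, while the cardinality savings are independent of $s$. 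The paper avoids the constant entirely by exploiting that every $\eta\in\Sigma_n^{(1)}$ lies on the $e_1$-axis with $\abs{\eta}\le nN$: after rescaling, the centre $c=\eta/n$ satisfies $c=c_1e_1$ with $\abs{c_1}\le N$, and then one has the \emph{pointwise} comparison $\wop{c_1e_1+q}\le\wop{Ne_1+q}$ for every $q\in Q$ (using $A\le N$ to handle the case $c_1+q_1<0$), hence $\int_{c+Q}\wop{\cdot}^{2s}\le\int_{Ne_1+Q}\wop{\cdot}^{2s}$ for $s\ge 0$ with constant exactly $1$. Substituting this for your sup/inf step, your packaging goes through as written, since only $n^{d+5+2\abs{s}}\le 2^{(d+5+2\abs{s})n}$ is then required. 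Alternatively, if one is content with a bound of the form $C_d^{\,\abs{s}+n}\big(2^{(d+5)/2+\abs{s}}\big)^n$ --- which is all that the applications in \cref{lem:existenz} and \cref{sec:proofth} actually use, as such constants are absorbed into $C_{s,l,d}$ --- your argument is complete as it stands, but it does not prove the lemma with the literal constants stated.
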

\begin{proof}
	Firstly, notice that for any $n\in \BN$ and any $(s,l)\in \BR^2$
	\begin{align*}
		\norm{\wop{\cdot}^{s}}_{L^2(\Omega_{n}^{(1)}) }^2 \le \sum\limits_{\eta \in \Sigma_n^{(1)}} \int\limits_{\eta+nQ} (1+\abs{\xi}^2)^{s} \di \xi =\sum\limits_{\eta \in \Sigma_n^{(1)}} n^d \int\limits_{\eta/n+Q} (1+n^2\abs{\xi}^2)^{s} \di \xi \le \sum\limits_{\eta \in \Sigma_n^{(1)}} n^{d+2\,\abs{s}} \int\limits_{\eta/n+Q} (1+\abs{\xi}^2)^{s} \di \xi.
	\end{align*}
	The translations $\eta\in \Sigma_n^{(1)}$ are at most $\abs{\eta}\le nN$. By the monotonicity of $r\mapsto (1+r^2)^{s}$ and $\# \Sigma_n^{(1)}\le 3^4n^5$, see \cref{lem:traeger2}, we estimate further
	\begin{align*}
		\norm{\wop{\cdot}^{s}}_{L^2(\Omega_{n}^{(1)}) } \le 3^2 n^{\tfrac{d+5}{2}+\abs{s}} \norm{\wop{\cdot}^{s}}_{L^2(Q\cup (Ne_1+Q))} \le 3^2 \big(2^{\tfrac{d+5}{2}+\abs{s}}\big)^n \norm{\wop{\cdot}^{s}}_{L^2(Q\cup (Ne_1+Q))}.
	\end{align*}
	In a similar fashion, we receive \begin{equation*}
		\norm{\wop{\cdot}^{l} \, \abs{\cdot}}_{L^2(\Omega_{n}^{(2)}) } \le 3^2 \big(2^{\tfrac{d+5}{2}+\abs{l}+1}\big)^n \norm{\wop{\cdot}^{l}\,\abs{\cdot}}_{L^2(Q\cup (Ne_1+Q))}.
	\end{equation*}
\end{proof}
\begin{lem}[Existence of the solution]\label{lem:existenz}
	Let $s,l\in \BR$, $d\in \BN$ and suppose the initial data $u_0$ and $n_0$ are as in \eqref{eq:anfangsdaten2} w.r.t. $s$ and $l$. Then for any time $T>0$ such that 
	\begin{equation*}
		\rho:=C\,2^{\tfrac{d+7}{2}+\abs{l} +s_\star}\abs{Q}^{\tfrac{1}{2}} \max\{ N^{\tfrac{1}{2}}\, \norm{\hat{u}_0}_{L^2}, \norm{\hat{n}_0}_{L^2} \} \, T <1,
	\end{equation*}
	where $C$ is the constant from \cref{lem:Sobolevbounds} and 
	\begin{equation*}
		s_\star := \max\Big\{  \frac{d+1}{2}, \frac{\abs{l}}{2}+ \frac{d}{4}+1, \abs{s} \Big\},
	\end{equation*} the series $\sum_{n=1}^{\infty}A_n\icol{u_0\\n_0}$ converges absolutely in $C([0,T]; H^{s,l})$ and is a solution to the reduced Zakharov system in mild formulation \eqref{eq:integralgleichungalternative}. 
\end{lem}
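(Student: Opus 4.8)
This is a power-series fixed-point argument for the map $\Phi_{\icol{u_0\\n_0}}$ of \eqref{eq:integralgleichungalternative}: the plan is to turn the per-term estimates of \cref{lem:Sobolevbounds} into a geometric bound in $n$, sum the series, and then check that the sum solves the fixed-point equation. For the geometric bound, write $M:=\max\{N^{1/2}\|\hat u_0\|_{L^2},\|\hat n_0\|_{L^2}\}$. In the first inequality of \cref{lem:Sobolevbounds} I would estimate the truncated-frequency quotient by $1$ (since $\|\hat u_0\|_{L^2(nQ^c)}\le\|\hat u_0\|_{L^2}$ and likewise for $n_0$), bound $\max_{1\le k\le n,\,2\nmid k}N^{(k-1)/2}(\|\hat u_0\|_{L^2})^{k}(\|\hat n_0\|_{L^2})^{n-k}\le\|\hat u_0\|_{L^2}\,M^{\,n-1}$, and replace $\|\wop{\cdot}^{s}\|_{L^2(nQ)}+\|\wop{\cdot}^{s}\|_{L^2(\Omega_n^{(1)}\setminus nQ)}$ by $C(2^{(d+5)/2+|s|})^{n}\|\wop{\cdot}^{s}\|_{L^2(Q\cup(Ne_1+Q))}$ via \cref{lem:besselpot_estimates} (the term on $nQ$ treated by the same rescaling). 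The factors $|Q|^{n/2-1}T^{n-1}M^{\,n-1}$ together with the exponentials-in-$n$ then make the bound factorise as $K_0\big(C\,2^{(d+5)/2+|s|}|Q|^{1/2}M\,T\big)^{n-1}$ with a constant $K_0=K_0(d,s,l,N,A,r)$ independent of $n$ and $t$, and the base is $\le\rho$ because $s_\star\ge|s|$. The wave component goes through identically, now with $\max_{2\le k\le n,\,2\mid k}N^{k/2-1}(\|\hat u_0\|_{L^2})^{k}(\|\hat n_0\|_{L^2})^{n-k}\le N^{-1}M^{\,n}$ and the $\|\wop{\cdot}^{l}\abs{\cdot}\|$-bound of \cref{lem:besselpot_estimates}; the base becomes $C\,2^{(d+7)/2+|l|}|Q|^{1/2}MT\le\rho$, and the extra room in $s_\star$ (the terms $\tfrac{d+1}{2}$ and $\tfrac{|l|}{2}+\tfrac d4+1$) absorbs the $\abs{\nabla}$-loss and the comparison of $\|\hat u_0\|_{L^2}$, $\|\hat n_0\|_{L^2}$, $|Q|^{-1/2}$ with $M$ into $K_0$. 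The case $n=1$ is immediate, $\|A_1^{(1)}(t)\|_{H^s}=\|u_0\|_{H^s}\lesssim r$ and $\|A_1^{(2)}(t)\|_{H^l}=\|n_0\|_{H^l}\lesssim r$.

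Since $\rho<1$, summing gives $\sum_{n\ge1}\big(\|A_n^{(1)}(t)\|_{H^s}+\|A_n^{(2)}(t)\|_{H^l}\big)\le K_0/(1-\rho)$ uniformly for $t\in[0,T]$. Each $A_n$ lies in $C([0,T];H^\infty\times H^\infty)$ by induction on $n$: $A_1$ is the strongly continuous free evolution \eqref{eq:defA1} of the frequency-localised, hence $H^\infty$, data $(u_0,n_0)$, and if $A_{n_1},A_{n_2}$ are such then so is each $N(A_{n_1},A_{n_2})$ in \eqref{eq:defAn}, because pointwise multiplication and $\abs{\nabla}$ preserve $H^\infty$ and $t\mapsto\int_0^tU(t-\tau)g(\tau)\,\di\tau$ is continuous for a strongly continuous unitary group $U$ and $g\in C([0,T];H^\infty\times H^\infty)$. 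By the Weierstrass test $\sum_nA_n$ converges absolutely and uniformly to some $S\in C([0,T];H^{s,l})$. Moreover, by \cref{lem:traeger2} the Fourier transform of $A_n^{(j)}(t)$ is supported in a ball of radius $\lesssim nN$, so the bound of the previous paragraph with $\wop{\cdot}^{s}$ replaced by $\wop{\cdot}^{s'}$ only costs a polynomial factor $(nN)^{|s'-s|}$, still summable against $\rho^{\,n-1}$; hence $S\in C([0,T];H^{s'}\times H^{l'})$ for every $(s',l')$, i.e. $S\in C([0,T];H^\infty\times H^\infty)$.

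It remains to check $S=\Phi_{\icol{u_0\\n_0}}(S)=L\icol{u_0\\n_0}+N(S,S)$. With $S_{\le J}:=\sum_{n=1}^{J}A_n$, bilinearity of $N$ and \eqref{eq:defAn} give $N(S_{\le J},S_{\le J})=\sum_{1\le n_1,n_2\le J}N(A_{n_1},A_{n_2})$. The estimates of the first paragraph in fact bound $\sum_{n_1+n_2=m}\|N(A_{n_1},A_{n_2})\|_{C([0,T];H^{s,l})}$ — these are precisely the quantities estimated in the proofs of \cref{lem:L2bounds} and \cref{lem:Sobolevbounds} — by $K_0\rho^{\,m-1}$, and by the previous paragraph the same holds in any $C([0,T];H^{s'}\times H^{l'})$ up to a polynomial factor; hence $\sum_{n_1,n_2\ge1}N(A_{n_1},A_{n_2})$ converges absolutely in $C([0,T];H^{s'}\times H^{s'})$ for every $s'$, with sum $\sum_{m\ge2}A_m=S-A_1$. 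Letting $J\to\infty$, on one side $N(S_{\le J},S_{\le J})\to S-A_1$ as partial sums of this absolutely convergent double series, and on the other side $N(S_{\le J},S_{\le J})\to N(S,S)$ since $S_{\le J}\to S$ in $C([0,T];H^{s''}\times H^{s''})$ for every $s''$ and the sole derivative loss in $N$, the $\abs{\nabla}$ in the wave component, is then harmless (take $s''$ one unit larger). As $A_1=L\icol{u_0\\n_0}$ this yields $N(S,S)=S-L\icol{u_0\\n_0}$, i.e. $S=\Phi_{\icol{u_0\\n_0}}(S)$ in $C([0,T];H^{s'}\times H^{s'})$ for $s'>\max\{d/2,s,l\}$, hence in $C([0,T];H^{s,l})$; so $S$ is a mild solution of \eqref{eq:integralgleichungalternative}.

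The only genuinely laborious part is the constant-tracking in the first paragraph — arranging every factor so that the geometric base is $\le\rho$ with the stated $s_\star$; the rest is routine, the decisive point being that the strict smallness $\rho<1$ automatically propagates the convergence to $C([0,T];H^\infty\times H^\infty)$, on which the product and the derivative in the nonlinearity cause no trouble.
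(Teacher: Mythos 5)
Your proposal is correct, and its first half — converting the per-term bounds of \cref{lem:Sobolevbounds} together with \cref{lem:besselpot_estimates} into a geometric bound $K_0\rho^{n-1}$ and summing — is exactly what the paper does, including the same algebraic identity for the partial sums, namely that $N(S_{\le J},S_{\le J})$ differs from $\sum_{m=2}^{J}A_m$ only by the tail $\sum_{n_1+n_2>J}N(A_{n_1},A_{n_2})$, which is $O(\rho^{J})$. Where you genuinely diverge is the final limiting step. The paper passes to the limit by invoking a separately proven continuity statement for the \emph{full} solution operator $\Phi_{\icol{u_0\\n_0}}$ on a closed ball of $C([0,T];H^{s_\star,\abs{s}})$ (\cref{lem:contsolopabove} in the appendix); this is precisely why $s_\star$ carries the extra terms $\tfrac{d+1}{2}$ and $\tfrac{\abs{l}}{2}+\tfrac{d}{4}+1$ — they are the hypotheses of that lemma — and why the paper tracks the three regularity pairs $(s,l)$, $(\abs{s},\abs{l})$, $(s_\star,\abs{s})$. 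You instead upgrade the convergence to $C([0,T];H^{\sigma}\times H^{\sigma})$ for every $\sigma$ by observing that $\SF_x A_n$ is supported in a ball of radius $\lesssim nN$ (\cref{lem:traeger2}), so that changing the Sobolev index costs only a factor polynomial in $n$, still summable against $\rho^{n-1}$; then only the continuity of the bilinear map $N$ in high-regularity norms is needed (Sobolev product estimate plus one extra derivative to absorb $\abs{\nabla}$), and the appendix lemma becomes superfluous. This is a clean simplification: in your version the only property of $s_\star$ actually used is $s_\star\ge\abs{s}$ (so that the geometric base is $\le\rho$), and the comment that the "extra room in $s_\star$" absorbs prefactors is not really needed — those prefactors sit harmlessly in $K_0$, which, like the paper's constant $C_{s,l,d,Q,N,u_0,n_0}$, is allowed to depend on $N$, $A$ and $r$. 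Both routes are valid; the paper's is more self-contained at the level of the fixed-point structure, yours exploits the compact frequency support more aggressively and is shorter.
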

\begin{proof}
	Fix $s,l\in \BR$, $u_0$ and $n_0$ as in \eqref{eq:anfangsdaten2} w.r.t. $s$ and $l$ and a time $T>0$ such that $\rho<1$. Firstly, notice that for any $n\in \BN$ and any $(s',l')\in \{(s,l),(\abs{s}, \abs{l}),(s_\star, \abs{s})\}$ by \cref{lem:besselpot_estimates}
	\begin{align*}
		\norm{\wop{\cdot}^{s'}}_{\Omega_{n}^{(1)}}^2 &\le 3^2 \big(2^{\tfrac{d+5}{2}+\abs{s'}}\big)^n \norm{\wop{\cdot}^{s'}}_{L^2(Q\cup (Ne_1+Q))},\\
		\norm{\wop{\cdot}^{l'} \, \abs{\cdot}}_{\Omega_{n}^{(2)}} &\le 3^2 \big(2^{\tfrac{d+5}{2}+\abs{l'}+1}\big)^n \norm{\wop{\cdot}^{l'}\,\abs{\cdot}}_{L^2(Q\cup (Ne_1+Q))}.
	\end{align*}
	By the above calculations and \cref{lem:Sobolevbounds}, there exists a constant $C_{s,l,d,Q, N, u_0, n_0}>0$, which may depend on $s,l,d,Q, N$ as well as $u_0$, $n_0$, $\norm{\wop{\cdot}^{s'}}_{L^2(Q\cup (Ne_1+Q))}$ and $\norm{\wop{\cdot}^{l'}\abs{\cdot}}_{L^2(Q\cup (Ne_1+Q))}$ for all three choices of $(s',l')$ but not on $n$, such that 
	\begin{equation}\label{eq:general_Sobolev_bound}
		\norm{A_n \icol{u_0\\n_0}}_{C ([0,T];H^{s'}\times H^{l'})}\le C_{s,l,d,Q, N, u_0, n_0}\, \rho^{n-1}
	\end{equation}
	holds for all three choices of $(s',l')$. Since we chose $T>0$ such that $\rho<1$, the series $\sum_{n=1}^\infty A_n\icol{u_0\\n_0}$ converges absolutely in $C ([0,T];H^{s'}\times H^{l'})$ for all choices of $(s',l')$. Now pick \begin{equation*}
		\delta':= 2\,C_{s,l,d,Q, N, u_0, n_0}\, \frac{1}{1-\rho},
	\end{equation*} 
	which is chosen such that $\norm{\sum_{n=1}^{\infty} A\icol{u_0\\n_0}}_{H^{s_\star,\abs{s}}}<\delta'$. With this choice and by noticing that the initial data are sufficiently regular $u_0,n_0\in H^\infty$, the operator $\Phi_{\icol{u_0\\ n_0}}$ is a continuous map between the ball of radius $\delta'$ centered in zero in $C([0,T]; H^{s_\star, \abs{s}})$ and the space $C([0,T]; H^{\abs{s}, \abs{l}})$ by \cref{lem:contsolopabove}. We define for short $u_m:= \sum_{n=1}^{m} A_n\icol{u_0\\n_0}$ and calculate
	\begin{align*}
		u_m-\Phi_{\icol{u_0\\n_0}} u_m &= u_m-A_1\icol{u_0\\n_0}- N(u_m,u_m) = u_m- A_1\icol{u_0\\n_0}- \sum_{n_1,n_2=1}^{m} N(A_{n_1}\icol{u_0\\n_0},A_{n_2}\icol{u_0\\n_0})\\
		&= u_m- A_1\icol{u_0\\n_0}- \sum_{n=2}^m A_n \icol{u_0\\n_0}-\sum_{\substack{n_1,n_2=1\\ n_1+n_2>m}}^{m} N(A_{n_1}\icol{u_0\\n_0},A_{n_2}\icol{u_0\\n_0}) \\
		&= -\sum_{\substack{n_1,n_2=1\\ n_1+n_2>m}}^{m} N(A_{n_1}\icol{u_0\\n_0},A_{n_2}\icol{u_0\\n_0}).
	\end{align*}
	Applying similar calculations as done to retrieve \eqref{eq:general_Sobolev_bound}, this converges to zero in $C([0,T]; H^{\abs{s}, \abs{l}})$ by 
	\begin{equation*}
		\norm{u_m - \Phi_{\icol{u_0\\ n_0}}u_m}_{C ([0,T];H^{\abs{s}, \abs{l}})}\lesssim C_{s,l,d,Q, N, u_0, n_0}\, \sum_{n=m+1}^{2m}\rho^{n-1} \to 0 \text{ as }m \to \infty.
	\end{equation*}
	Now we use the continuity of $\Phi_{\icol{u_0\\ n_0}}$ to conclude that $\sum_{n=1}^{\infty}A_n\icol{u_0\\n_0}$ solves the reduced Zakharov system in mild formulation.
\end{proof}
A crucial step in proving norm inflation is a proper lower Sobolev bound on the solution. To achieve this, we prove lower bounds on the quadratic terms $A_2\icol{u_0\\n_0}$ and later show that they dominate the solution. In the following lemma we will also specify different sub types of initial data.
\begin{lemma}\label{lem:lower_sobo_bounds}
	Let $d\in \BN$ and $s,l\in \BR$. Consider initial data $u_0$ and $n_0$ as in \eqref{eq:anfangsdaten2}. For sufficiently big $N\in \BN$ the following lower bound on the second term in the series expansion are true.
	\begin{itemize}
		\item[\namedlabel{itm:a}{\normalfont\textbf{(a)}}]{\textit For initial data of \eqref{eq:caseI} with $A= \tfrac{N}{\log(N)}$, $\Sigma_1^{(1)}=\{\pm Ne_1\}$, $\Sigma_1^{(2)}= \emptyset$ and times $0<T\ll N^{-2} \log(N)$ 
			\begin{equation*}
				\norm{A_2^{(2)}(T)}_{H^l}\gtrsim r^2 \, T \, N^{l-2\,s+1}\, A^{\tfrac{d}{2}}. 
			\end{equation*}
		}
		\item[\namedlabel{itm:b}{\normalfont \textbf{(b)}}]{\textit For initial data of \eqref{eq:caseII} with $A=\tfrac{1}{N}$, $\Sigma_1^{(1)}=\{(N+1)e_1, -Ne_1\}$, $\Sigma_1^{(2)}= \emptyset$ and times $0<T\ll 1$
			\begin{equation*}
				\norm{A_2^{(2)}(T)}_{H^l}\gtrsim r^2\,T\, N^{l-2\,s+\tfrac{1}{2}}.
			\end{equation*}
		}
		\item[\namedlabel{itm:c}{\normalfont\textbf{(c)}}]{\textit For initial data with $A=1$, $\Sigma_1^{(1)}=\{0, Ne_1\}$, $\Sigma_1^{(2)}= \emptyset$ and times $0<T\ll N^{-2}$
			\begin{align*}
				\norm{A_2^{(2)}(T)}_{H^l}\gtrsim r^2\,T\,N^{l-s+1}.
			\end{align*}
			\begin{itemize}
				\item[\namedlabel{itm:c1}{\normalfont{\textbf{(c')}}} ]{\textit
					And for times $0<T\ll N^{-1}$
					\begin{align*}
						\norm{A_2^{(2)}(T)}_{H^l}\gtrsim r^2\,T\,N^{-2\,s}.
					\end{align*}
				}
			\end{itemize}
		}
		\item[\namedlabel{itm:d}{\normalfont\textbf{(d)}}]{\textit For initial data with $A=1$, $\Sigma_1^{(1)}=\{0\}$, $\Sigma_1^{(2)}= \{ \pm Ne_1 \}$ and times $0<T\ll N^{-2}$
			\begin{equation*}
				\norm{A_2^{(1)}(T)}_{H^s}\gtrsim r^2\,T\,N^{s-l}.
			\end{equation*}
		}
		\item[\namedlabel{itm:e}{\normalfont\textbf{(e)}}]{\textit For initial data of \eqref{eq:caseI} with $A=\tfrac{N}{\log(N)}$, $\Sigma_1^{(1)}=\{\pm N e_1\}$, $\Sigma_1^{(2)}= \{ \pm Ne_1 \}$ and times $0<T\ll N^{-2}$
			\begin{equation*}
				\norm{A_2^{(1)}(T)}_{H^s}\gtrsim r^2\,T\,N^{-l-s}\, \begin{cases}
					A^{\tfrac{d}{2}+s} &,\tfrac{d}{2}+s>0\\
					\ln(A)&,\tfrac{d}{2}+s=0\\
					1&,\tfrac{d}{2}+s<0
				\end{cases}.
			\end{equation*}
		}
		\item[\namedlabel{itm:f}{\normalfont\textbf{(f)}}]{\textit For initial data of \eqref{eq:caseI} with $A=\tfrac{N}{\log(N)}$, $\Sigma_1^{(1)}=\{0\}$, $\Sigma_1^{(2)}= \{ \pm Ne_1 \}$ and times $0<T\ll N^{-2}$
		\begin{equation*}
			\norm{A_2^{(1)}(T)}_{H^s}\gtrsim r^2 T N^{s-l}  \begin{cases}
				A^{\tfrac{d}{2}-s} &, d-s >0\\
				A^{-\tfrac{d}{2}} \log (A) &, d-s=0\\
				A^{-\tfrac{d}{2}} &, d-s<0
			\end{cases}.
		\end{equation*}
		 }
	\end{itemize}
\end{lemma}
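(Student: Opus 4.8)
The plan is to derive all six bounds from a common template: from the explicit expressions for $\SF_x A_2^{(1)}$ and $\SF_x A_2^{(2)}$ computed above, single out the resonant quadratic interaction producing output frequencies in a block one can probe, lower bound the Duhamel time integral by a multiple of $T$, and read off the estimate. Concretely, in each case I would fix the pair $(\eta_{u,1},\eta_{u,2})\in\Sigma_1^{(1)}\times\Sigma_1^{(1)}$ (wave coordinate) or $(\eta_u,\eta_n)\in\Sigma_1^{(1)}\times\Sigma_1^{(2)}$ (Schrödinger coordinate) whose output is centred near the point $\eta_\ast$ indicated by the data --- one of $0,\pm Ne_1,\pm 2Ne_1,(2N{+}1)e_1$ --- and restrict the $\xi$-integral in $\norm{A_2^{(j)}(T)}_{H^\cdot}$ to the half-size box $\eta_\ast+\tfrac12 Q$ (for \ref{itm:c1} to a fixed annulus inside $2Q$ staying away from $0$, so that the factor $\abs{\xi}$ in $\SF_x A_2^{(2)}$ does not degenerate). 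On such a box the support constraints $\eta\in\eta_{u,1}+Q$, $\eta-\xi\in\eta_{u,2}+Q$ (resp.\ $\eta\in\eta_u+Q$, $\xi-\eta\in\eta_n+Q$) cut out an $\eta$-domain of measure $\gtrsim 2^{-d}\abs{Q}$, while the Bessel weights $\wop{\eta}^{-s},\wop{\eta-\xi}^{-s}$ (resp.\ $\wop{\eta}^{-s},\wop{\xi-\eta}^{-l}$), the factor $\abs{\xi}$, and the outer weight $\wop{\xi}^{l}$ (resp.\ $\wop{\xi}^{s}$) are each comparable to the power of $N$ (or, in \ref{itm:e}--\ref{itm:f}, of $A$) dictated by which of the translates $0,\pm Ne_1,(N{+}1)e_1$ the frequencies $\eta,\eta-\xi,\xi$ lie near.

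The crux is the lower bound $\bigl|\int_0^T e^{is\phi(\xi,\eta)}\,\di s\bigr|\gtrsim T$ for the oscillatory time integral, which holds as soon as $T\sup\abs{\phi}\ll1$ on the region in play (then $\int_0^T e^{is\phi}\,\di s=T+O(T^2\sup\abs{\phi})$ and its real part is still $\asymp T$; for the Schrödinger coordinate one first writes $e^{is\phi_+^{(1)}}+e^{is\phi_-^{(1)}}=2e^{is(\abs{\xi}^2-\abs{\eta}^2)}\cos(s\abs{\xi-\eta})$ and also uses $T\abs{\xi-\eta}\ll1$). The sizes of the phases are precisely what force the six time scales: for \ref{itm:a}, on the block near $2Ne_1$, $\phi^{(2)}=\abs{\xi}-\bigl(\abs{\eta}^2-\abs{\eta-\xi}^2\bigr)=O(NA)=O(N^2/\log N)$, hence $T\ll N^{-2}\log N$; for \ref{itm:c}, \ref{itm:d} the term $\abs{\eta}^2$ (resp.\ $\abs{\xi}^2$) is of size $N^2$, so $\abs{\phi}\lesssim N^2$ and $T\ll N^{-2}$; for \ref{itm:c1} one keeps a low-frequency output, where the $Ne_1-Ne_1$ interaction has $\abs{\eta}^2-\abs{\eta-\xi}^2=2N\xi_1+O(1)$ with $\xi_1$ bounded, so $\abs{\phi^{(2)}}\lesssim N$ and $T\ll N^{-1}$ is enough; and in \ref{itm:b} the thin cuboid $Q=(-\tfrac1{2N},\tfrac1{2N})\times(-1,1)^{d-1}$ and the shift $(N{+}1)e_1$ are arranged so that $\abs{\eta}^2-\abs{\eta-\xi}^2=(N{+}1{+}a_1)^2-(N{-}c_1)^2+\abs{a'}^2-\abs{c'}^2=2N{+}1+O(1)$ almost exactly cancels $\abs{\xi}=2N{+}1+O(1/N)$, leaving $\phi^{(2)}=O(1)$ and so permitting any $T\ll1$. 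In every case the oscillation of $\phi$ over the $\eta$-domain has the same order as $\sup\abs{\phi}$, so $\int_0^T e^{is\phi}\,\di s=T(1+o(1))$ holds uniformly and pulls out of the $\eta$-integral. When two interactions feed the same block --- the pairs $(\pm Ne_1,\mp Ne_1)$ in \ref{itm:e}, and $0-0$ together with $Ne_1-Ne_1$ in \ref{itm:c1} --- one checks, after extracting the common outer phase $-ie^{-iT\abs{\xi}^2}$ resp.\ $-ie^{-iT\abs{\xi}}$, that each contributes $-i$ times that phase times an essentially nonnegative real, so they add coherently rather than cancel; in \ref{itm:c1} the $Ne_1-Ne_1$ term carries the factor $N^{-2s}$, so the coherent sum has modulus $\gtrsim r^2T(1+N^{-2s})\ge r^2T\,N^{-2s}$, matching the claim.

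Putting these together, one gets on the chosen box $\abs{\SF_x A_2^{(j)}(T,\xi)}\gtrsim 2^{-d}r^2T$ times the appropriate power of $N$, and integrating $\wop{\xi}^{2l}\abs{\SF_x A_2^{(2)}(T,\xi)}^2$ (resp.\ $\wop{\xi}^{2s}\abs{\SF_x A_2^{(1)}(T,\xi)}^2$) over that box (measure $\asymp\abs{Q}$) yields the stated inequalities after substituting $\abs{Q}=A^d$ for type \eqref{eq:caseI} or $\abs{Q}\asymp A$ for type \eqref{eq:caseII}. For \ref{itm:e}--\ref{itm:f} there is one further elementary ingredient: the radial integral $\int_Q\wop{\xi}^{2s}\,\di\xi$ (in \ref{itm:e}, behavior governed by the sign of $d+2s=2(\tfrac d2+s)$) respectively $\int_Q\wop{\eta}^{-s}\,\di\eta$ (in \ref{itm:f}, threshold $d-s$), each of which splits into the ranges $\{\abs{\cdot}\le1\}$ and $\{1\le\abs{\cdot}\lesssim A\}$ and produces the three-fold case distinction in the statement. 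I expect the phase analysis --- above all pinning down the near-cancellation underlying case \ref{itm:b}, and verifying the uniform time-scale bounds in the remaining cases --- to be the only genuine obstacle; everything else is bookkeeping with the explicit formulas together with \cref{lem:traeger2}.
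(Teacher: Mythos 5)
Your proposal is correct and follows essentially the same route as the paper's proof: restrict the output frequency to a half-size translate of $Q$, bound the relevant phase ($O(NA)$ in \ref{itm:a}, $O(1)$ via the near-cancellation in \ref{itm:b}, $O(N^2)$ resp.\ $O(N)$ in the remaining cases) so that the Duhamel integral contributes a factor $\gtrsim T$, and then read off the weights and the measure of the block, with the same radial integrals producing the case distinctions in \ref{itm:e}--\ref{itm:f}. Your explicit check that multiple interaction pairs feeding the same block add coherently is a point the paper handles only implicitly (by lower-bounding the modulus by the real part of the full sum), and your identification of the $(N{+}1)e_1$/$-Ne_1$ cancellation in \ref{itm:b} is exactly the paper's mechanism.
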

\begin{proof}
	\ref{itm:a} For $\xi \in 2\, N e_1+\tfrac{1}{2}Q$, $\eta\in Ne_1+Q$ and $-\xi+\eta\in -N e_1 +Q$ the wave phase of the second term can be estimated by
	\begin{align*}
		\abs{\phi^{(2)}(\xi, \eta)}= \abs{\abs{\xi}-\abs{\eta}^2+\abs{\eta-\xi}^2}\lesssim N \, A= \frac{N^2}{\log(N)}.
	\end{align*}
	Therefore, for times $0<T\ll N^{-2} \log(N)$ the real part of the time integral is bounded from below, i.e.
	\begin{equation*}
		\Re \int\limits_0^T e^{is \phi^{(2)}(\xi, \eta)} \di s\gtrsim T.
	\end{equation*}
	Because $\xi \in 2\, N e_1+\tfrac{1}{2}Q$ and $\eta\in Ne_1+\tfrac{1}{2} Q$ implies $-\xi+\eta\in -N e_1 +Q$, we estimate
	\begin{align*}
		\abs{\wop{\xi}^{l}\widehat{A_2^{(2)}}(T, \xi)} &\ge r^2 \abs{Q}^{-1} \Re \int\limits_{Ne_1 +\tfrac{1}{2}Q}  \frac{\wop{\xi}^l \abs{\xi}}{\wop{\eta}^s\wop{\eta-\xi}^s} \int\limits_0^T e^{is\phi^{(2)}(\xi, \eta)} \di s \di \eta\\
		&\gtrsim r^2 T  \fint\limits_{Ne_1 +\tfrac{1}{2}Q} \frac{\wop{\xi}^l \abs{\xi}}{\wop{\eta}^s\wop{\eta-\xi}^s} \di \eta\gtrsim r^2 T N^{-2\,s}  \wop{\xi}^l\abs{\xi}.
	\end{align*}
	We integrating this over $2\, N e_1+\tfrac{1}{2}Q$. This yields the desired bound.
	\begin{equation*}
		\norm{A_2^{(2)}(T)}_{H^l}\gtrsim r^2 T N^{-2\,s}  \norm{\wop{\cdot}^l\abs{\cdot}}_{L^2(2\, N e_1+\tfrac{1}{2}Q)}\gtrsim r^2 T  N^{l-2\,s+1} A^{\tfrac{d}{2}} .
	\end{equation*}

	\ref{itm:b} For $\xi \in (2N+1)e_1+\tfrac{1}{2}Q$, $\eta \in (N+1)e_1 +Q$ and $-\xi+\eta\in -Ne_1+Q$ the wave phase of the second term is bounded $\abs{\phi^{(2)}(\xi, \eta)}\lesssim 1$. Thus, for times $0<T\ll 1$ again the real part of the time integral satisfies $\Re\int_0^T e^{is\phi^{(2)}(\xi, \eta)}\di s \gtrsim T$. Since $\xi \in (2N+1)e_1+\tfrac{1}{2}Q$ and $\eta \in (N+1)e_1 +\tfrac{1}{2}Q$ implies $-\xi+\eta\in -Ne_1+Q$, we estimate
	\begin{align*}
		\abs{\wop{\xi}^l \widehat{A_2^{(2)}}(T,\xi)}\gtrsim r^2 T \fint\limits_{Ne_1 +\tfrac{1}{2}Q} \frac{\wop{\xi}^l \abs{\xi}}{\wop{\eta}^s\wop{\eta-\xi}^s} \di \eta\gtrsim r^2 T N^{-2\,s}\wop{\xi}^l \abs{\xi}.
	\end{align*}
	Integrating this over $(2N+1)e_1+\tfrac{1}{2}Q$ yields the desired bound.
	
	\ref{itm:c} For $\xi \in - Ne_1+\tfrac{1}{2}Q$, $\eta\in Q$ and $-\xi+\eta \in Ne_1+Q$ the wave phase of the second term is of order $\abs{\phi^{(2)}(\xi, \eta) }\simeq N^2$. Again $\Re\int_0^T e^{is\phi^{(2)}(\xi, \eta)}\di s \gtrsim T$ for times $0<T\ll N^{-2}$. Since $\xi - Ne_1+\tfrac{1}{2}Q$ and $\eta\in \tfrac{1}{2}Q$ implies $-\xi+\eta \in Ne_1+Q$, we estimate
	\begin{align*}
		\abs{\wop{\xi}^l \widehat{A_2^{(2)}}(T,\xi)}\gtrsim r^2 T \fint\limits_{\tfrac{1}{2}Q} \frac{\wop{\xi}^l \abs{\xi}}{\wop{\eta}^s\wop{\eta-\xi}^s} \di \eta\gtrsim r^2 T N^{-s} \wop{\xi}^l \abs{\xi}.
	\end{align*}
	Integrating over $- Ne_1+\tfrac{1}{2}Q$ yields the desired bound. 
	
	\ref{itm:c1} For the second bound notice that $\abs{\phi^{(2)}(\xi, \eta)}\lesssim N$ for $\xi \in \tfrac{1}{2}Q$, $\eta \in Ne_1+Q$ and $-\xi+\eta \in Ne_1+Q$. Thus, for times $0<T\ll N^{-1}$ and $\xi \in \tfrac{1}{2}Q$ we calculate
	\begin{align*}
		\abs{\wop{\xi}^l \widehat{A_2^{(2)}}(T,\xi)}\gtrsim r^2 T \fint\limits_{\tfrac{1}{2}Q} \frac{\wop{\xi}^l \abs{\xi}}{\wop{\eta}^s\wop{\eta-\xi}^s} \di \eta\gtrsim r^2 T N^{-2\,s} \wop{\xi}^l  \abs{\xi}.
	\end{align*}
 	We conclude the second bound by integrating this over $ \tfrac{1}{2}Q$.
 	
 	\ref{itm:d} For $\xi \in Ne_1+\tfrac{1}{2}Q$, $\eta\in Q$ and $\xi-\eta \in Ne_1+Q$ the Schrödinger phases of the second term are bounded by 
 	\begin{equation*}
 		\abs{\phi_{\pm}^{(1)}}= \abs{\abs{\xi}^2-\abs{\eta}^2\pm \abs{\xi-\eta}}\lesssim N^{-2}.
 	\end{equation*}
 	Therefore, for times $0<T\ll N^{-2} $ the real part of the time integral is bounded by
 		\begin{equation*}
 			\Re \int\limits_0^T e^{is \phi_{\pm}^{(1)}(\xi, \eta)} \di s\gtrsim T.
 		\end{equation*} 
 	Notice that $\xi-\eta \in Ne_1+Q$ for $\xi \in Ne_1+\tfrac{1}{2}Q$, $\eta\in \tfrac{1}{2}Q$. Thus, we estimate 
 	\begin{align*}
 		\abs{\wop{\xi}^s \widehat{A_2^{(1)}}(T,\xi)}&\ge r^2 \abs{Q}^{-1} \Re \int\limits_{\tfrac{1}{2} Q} \frac{\wop{\xi}^s}{\wop{\eta}^{s}\wop{\xi-\eta}^l} \int\limits_0^T e^{is \phi_{\pm}^{(1)}(\xi,\eta)} \di s \di \eta\\
 		&\gtrsim r^2 T \fint\limits_{\tfrac{1}{2} Q} \frac{\wop{\xi}^s}{\wop{\eta}^{s}\wop{\xi-\eta}^l} \di \eta \gtrsim r^2 T N^{s-l}.
 	\end{align*}
 	Integrating over $Ne_1+\tfrac{1}{2}Q$ yields $\norm{A_2^{(1)}}_{H^s} \gtrsim r^2 T N^{s-l}$.
 	
 	\ref{itm:e} If $\xi \in \tfrac{1}{2}Q$, $\eta \in Ne_1 +Q$ and $\xi-\eta \in -Ne_1 +Q$, then $\abs{\phi_{\pm}^{(1)}}\lesssim N^{-2}$. Thus, for times $0<T\ll N^{-2}$
 	we have $	\Re \int\limits_0^T e^{is \phi_{\pm}^{(1)}(\xi, \eta)} \di s\gtrsim T$. If we further restrict $\xi \in \tfrac{1}{2}Q$, $\eta \in Ne_1 +\tfrac{1}{2}Q$, then $\xi-\eta \in -Ne_1 +Q$. Therefore,
 	\begin{equation*}
 		\abs{\wop{\xi}^s \widehat{A_2^{(1)}}(T,\xi)}\gtrsim r^2 T \fint\limits_{\tfrac{1}{2} Q} \frac{\wop{\xi}^s}{\wop{\eta}^{s}\wop{\xi-\eta}^l} \di \eta \gtrsim r^2 T N^{s-l} \fint\limits_{\tfrac{1}{2}Q} \wop{\eta}^{-s}\di \eta = r^2 T N^{-s-l} \wop{\xi}^s.
 	\end{equation*}
 	Thereby, integrating over $\tfrac{1}{2}Q$ yields 
 	\begin{align*}
 		\norm{A_2^{(1)}}_{H^s} \gtrsim r^2 T N^{-s-l} \norm{\wop{\cdot}^s}_{L^2(\tfrac{1}{2}Q)}= r^2 T N^{-s-l} \begin{cases}
 			A^{\tfrac{d}{2}+s} &, \tfrac{d}{2}+s>0\\
 			\log(A)&, \tfrac{d}{2}+s=0\\
 			1 &, \tfrac{d}{2}+s<0
 		\end{cases}.
 	\end{align*}
 
 	\ref{itm:f} For $\xi \in Ne_1+\tfrac{1}{2}Q$, $\eta\in Q$ and $\xi-\eta \in Ne_1+Q$ the Schrödinger phases of the second term are bounded by 
 	\begin{equation*}
 		\abs{\phi_{\pm}^{(1)}}= \abs{\abs{\xi}^2-\abs{\eta}^2\pm \abs{\xi-\eta}}\lesssim N^{-2}.
 	\end{equation*}
 	Thus, for times $0<T\ll N^{-2}$
 	\begin{equation*}
 		\Re \int\limits_0^T e^{is \phi_{\pm}^{(1)}(\xi, \eta)} \di s\gtrsim T.
 	\end{equation*} 
 	If $\xi \in Ne_1+\tfrac{1}{2}Q$, $\eta\in \tfrac{1}{2}Q$, then $\xi-\eta \in Ne_1+Q$. Therefore,
 	\begin{align*}
 		\abs{\wop{\xi}^s \widehat{A_2^{(1)}}(T,\xi)}&\ge r^2 \abs{Q}^{-1} \Re \int\limits_{\tfrac{1}{2} Q} \frac{\wop{\xi}^s}{\wop{\eta}^{l}\wop{\xi-\eta}^s} \int\limits_0^T e^{is \phi_{\pm}^{(1)}(\xi,\eta)} \di s \di \eta \gtrsim r^2 T \fint\limits_{\tfrac{1}{2} Q} \frac{\wop{\xi}^s}{\wop{\eta}^{s}\wop{\xi-\eta}^l} \di \eta\\
 		&\gtrsim r^2 T N^{s-l} \fint\limits_{\tfrac{1}{2}Q} \wop{\eta}^{-s}\di \eta = r^2 T N^{s-l} \begin{cases}
 			A^{-s} &, d-s >0\\
 			A^{-d}\log (A) &,d-s=0\\
 			A^{-d} &, d-s<0
 		\end{cases}.
 	\end{align*}
 	Integrating over $Ne_1+\tfrac{1}{2}Q$ yields the claim. 
\end{proof}
Combining all results proven in this section on the series expansion for certain initial data enables us to prove \cref{Theorem 1.1}. 
\subsection{Proof of Theorem 1.1}\label{sec:proofth} Let $d\in \BN$ and $(s,l)\in \BR^2$. We split the proof of \cref{Theorem 1.1} into several parts according to the cases of initial data established in \cref{lem:lower_sobo_bounds}. Recall that all calculations are based on a big $N\in \BN$, which we send to $\infty$ later. By the choice of initial data from \eqref{eq:anfangsdaten2}, we have in any case $\norm{u_0}_{H^s}, \norm{n_0}_{H^l}\lesssim r= (\log^2 N)^{-1}\to 0$ as $N\to \infty$. For the remaining proof we define $\rho$ as in \cref{lem:existenz} and the constant $C_{s,l,d}:= C\,\sqrt{2}^{\,d+7+2\abs{l} +2s_\star}$, where $C>0$ is the constant from \cref{lem:Sobolevbounds} and $s_\star$ as defined in \cref{lem:existenz}. In the end these constant will play no role for the proof of norm inflation, because they don't scale with $N$.\medskip

\textbf{(a)} For initial data as in \cref{lem:lower_sobo_bounds} \ref{itm:a} the Zakharov system exhibits norm inflation in the wave coordinate for any Sobolev regularities satisfying $2s-l<\tfrac{d-2}{2}$, $s-l<\tfrac{1}{2}$ and $l\ge-\tfrac{d+2}{2}$. Set \begin{equation*}
	T:= r^{-3} (\log N)^{\tfrac{d}{2}} N^{2\,s-l-1-\tfrac{d}{2}} .
\end{equation*} Since $2\,s-l-\tfrac{d-2}{2}<0$, we conclude by this choice of $T>0$ that $T\ll N^{-2}(\log N)$. Thus by \cref{lem:lower_sobo_bounds}, the wave coordinate of the second term in $H^l$-norm is bounded from below
\begin{equation*}
	\norm{A_2^{(2)}(T)}_{H^l}\gtrsim r^2 T  N^{l-2\,s+1} A^{\tfrac{d}{2}}= r^{-1}=\log^2 N \to \infty \text{ as } N \to \infty.
\end{equation*}
Next, we want to ensure that the series $\sum_{n=1}^{\infty}A_n \icol{u_0\\ 0}$ converges, is a mild solution of the reduced Zakharov system and that its wave coordinate enjoys the same lower bound as that of the second term. To achieve this, we begin by calculating the $L^2$-norm of the initial data. We have $\norm{n_0}_{L^2}=0$ and 
\begin{align*}
	\norm{u_0}_{L^2}= r\abs{Q}^{-\tfrac{1}{2}} \norm{\wop{\cdot}^{-s}}_{L^2(\pm Ne_1+Q)}\simeq r N^{-s}.
\end{align*}
Therefore, the series converges absolutely and is a solution by \cref{lem:existenz}, since we assume $s-l<\tfrac{1}{2}$ and thus
\begin{align*}
	\rho = C_{s,l,d} \abs{Q}^{\tfrac{1}{2}} N^{\tfrac{1}{2}} \norm{u_0}_{L^2} T \simeq  r^{-2} N^{s-l-\tfrac{1}{2}}\ll 1.
\end{align*}
By \cref{lem:besselpot_estimates}, 
\begin{equation*}
	\norm{\wop{\cdot}^{l} \, \abs{\cdot}}_{L^2(\Omega_{n}^{(2)}) } \le 3^2 \big(2^{\tfrac{d+5}{2}+\abs{l}+1}\big)^n \norm{\wop{\cdot}^{l}\,\abs{\cdot}}_{L^2(Q\cup (Ne_1+Q))}.
\end{equation*}
For $l+1\ge 0$ this can be simply bounded by 
\begin{equation*}
	\norm{\wop{\cdot}^{l} \, \abs{\cdot}}_{L^2(\Omega_{n}^{(2)}) } \le 3^2 \big(2^{\tfrac{d+5}{2}+\abs{l}+1}\big)^n N^{l+1} \abs{Q}^{\tfrac{1}{2}}
\end{equation*}
and for $l+1<0$ we have
\begin{equation*}
	\norm{\wop{\cdot}^{l} \, \abs{\cdot}}_{L^2(\Omega_{n}^{(2)}) } \le 2\,3^2 \big(2^{\tfrac{d+5}{2}+\abs{l}+1}\big)^n \norm{\wop{\cdot}^{l}\,\abs{\cdot}}_{L^2(Q)}\lesssim \big(\frac{C_{s,l,d}}{C}\big)^n \begin{cases}
		A^{\tfrac{d+2}{2}+l} &, \frac{d+2}{2} + l>0\\
		\log(A)&, \frac{d+2}{2}+l=0
	\end{cases}.
\end{equation*}
The wave coordinate $A_n^{(2)}$ is zero if $n\ge 2$ is uneven since the wave initial data is zero. We use \cref{lem:Sobolevbounds} and the above calculations on the Bessel potential to estimate the wave coordinate of the solution without the first two terms $\sum_{n=3}^\infty A_{n}^{(2)}$ by
\begin{align*}
	\sum_{n=3}^{\infty}\norm{A_{n}^{(2)}(T)}_{H^l} &\lesssim  \abs{Q}^{-\tfrac{1}{2}}   N^{-\tfrac{1}{2}}\norm{\hat{u}_0}_{L^2}  \rho^{3}\begin{cases}
		N^{l+1} A^{\tfrac{d}{2}} &, l+1\ge 0\\
		A^{\tfrac{d+2}{2}+l} &, l+1< 0,\frac{d+2}{2} + l>0\\
		\log(A)&, \frac{d+2}{2}+l=0
	\end{cases}\\
&\lesssim 	 (\log^2 N)^{3}\,\begin{cases}
 N^{2\,s-2l-1}  &, l+1\ge 0\\
		 (\log N)^{-l-1} N^{2\,s-2l-1} &, l+1< 0,\frac{d+2}{2} + l>0\\
	(\log N)^{\tfrac{d+2}{2}} N^{2\,s+d+1} &, \frac{d+2}{2}+l=0
\end{cases}\quad \ll \log^2 N\lesssim 	\norm{A_2^{(2)}(T)}_{H^l}.
\end{align*}
Therefore, we found a sequence (in $N\in \BN$) of times $T$ and initial data $(u_0, 0)\in H^{\infty, \infty}$ such that the corresponding solutions to the reduced Zakharov system  $\sum_{n=1}^\infty A_n \icol{u_0\\ 0}$ exist at least up to time $T$, the initial data as well as the time $T$ converge to zero and the solution diverges
\begin{align*}
	\norm{\sum_{n=1}^\infty A_n^{(2)}(T)}_{H^l}\ge \norm{ A_2^{(2)}(T)}_{H^l}-\sum_{n=3}^\infty\norm{ A_n^{(2)}(T)}_{H^l}\gtrsim \log^2 N \to \infty \text{ as } N\to \infty.
\end{align*}

\textbf{(b)} For initial data as in \cref{lem:lower_sobo_bounds} \ref{itm:b} the Zakharov system exhibits norm inflation in the wave coordinate for any Sobolev regularities satisfying $2s-l<\tfrac{1}{2}$, $s-l<\tfrac{1}{2}$ and $l\ge-1$. We pick
\begin{equation*}
	T:= r^{-3} N^{2s-l-\tfrac{1}{2}}.
\end{equation*}
Because $2s-l<\tfrac{1}{2}$, we have $T\ll 1$ and thus by \cref{lem:lower_sobo_bounds}
\begin{equation*}
	\norm{A_2^{(2)}(T)}_{H^l}\gtrsim \log^2 N \to \infty \text{ as } N\to \infty.
\end{equation*}
Additionally, $\norm{n_0}_{L^2}=0$ and $\norm{u_0}_{L^2}\simeq rN^{-s}$. By \cref{lem:existenz}, the assumption $s-l<\tfrac{1}{2}$ and 
\begin{equation*}
	\rho = C_{s,l,d} \abs{Q}^{\tfrac{1}{2}} N^{\tfrac{1}{2}} \norm{u_0}_{L^2} T \simeq  r^{-2} N^{s-l-\tfrac{1}{2}}\ll 1
\end{equation*}
the series $\sum_{n=1}^{\infty} A_n\icol{u_0\\ 0}$ converges in $H^{s,l}$ and is a solution to the reduced Zakharov system. By \cref{lem:besselpot_estimates}, we know for $l+1\ge0$
\begin{equation*}
	\norm{\wop{\cdot}^{l} \, \abs{\cdot}}_{L^2(\Omega_{n}^{(2)}) } \le 3^2 \big(2^{\tfrac{d+5}{2}+\abs{l}+1}\big)^n \norm{\wop{\cdot}^{l}\,\abs{\cdot}}_{L^2(Q\cup (Ne_1+Q))}\lesssim \big(\frac{C_{s,l,d}}{C}\big)^n  N^{l+\tfrac{1}{2}}.
\end{equation*}
Together with \cref{lem:Sobolevbounds} we estimate
\begin{equation*}
	\sum_{n=3}^{\infty}\norm{A_{n}^{(2)}(T)}_{H^l} \lesssim  \abs{Q}^{-\tfrac{1}{2}}   N^{-\tfrac{1}{2}}\norm{\hat{u}_0}_{L^2}  \rho^{3} N^{l+\tfrac{1}{2}}\lesssim  (\log^2 N)^{5} N^{2s-2l-1} \ll  \log^2 N \lesssim \norm{A_2^{(2)}(T)}_{H^l}.
\end{equation*}
By the same reasoning as in (a), we conclude norm inflation.\medskip

\textbf{(c)} For initial data as in \cref{lem:lower_sobo_bounds} \ref{itm:c} the Zakharov system exhibits norm inflation in the wave coordinate for Sobolev regularities satisfying $s-l<-1$ and $s\ge0$. We set
\begin{equation*}
	T:= r^{-3} N^{s-l-1}
\end{equation*}
and, thus, $T\ll N^{-2}$, because we assumed $s-l<-1$. By \cref{lem:lower_sobo_bounds}, 
\begin{equation*}
	\norm{A_2^{(2)}(T)}_{H^l}\gtrsim \log^2 N \to \infty \text{ as } N\to \infty.
\end{equation*}
Our initial data in $L^2$ are of size $\norm{n_0}_{L^2}=0$, $\norm{u_0}_{L^2(Q)}\simeq r$ and $\norm{u_0}_{L^2(Q^c)}\simeq rN^{-s}$. Thus, $\norm{u_0}_{L^2}\simeq r$ by $s\ge 0$. 
\begin{equation*}
		\rho = C_{s,l,d} \abs{Q}^{\tfrac{1}{2}} N^{\tfrac{1}{2}} \norm{u_0}_{L^2} T \simeq r^{-2} N^{s-l-\tfrac{1}{2}}\ll 1
\end{equation*}
implies via \cref{lem:existenz} the convergence $\sum_{n=1}^{\infty}A_n$ and it solves the reduced Zakharov system. Since we assumed $l+1\ge0$, we estimate using \cref{lem:besselpot_estimates}  
\begin{align*}
	\norm{\wop{\cdot}^{l} \, \abs{\cdot}}_{L^2(\Omega_{n}^{(2)}\setminus nQ)} &\le 3^2 \big(2^{\tfrac{d+5}{2}+\abs{l}+1}\big)^n \norm{\wop{\cdot}^{l}\,\abs{\cdot}}_{L^2(Q\cup (Ne_1+Q))}\lesssim \big(\frac{C_{s,l,d}}{C}\big)^n  N^{l+1}\\
	\intertext{and in a similar manner}
	\norm{\wop{\cdot}^{l} \, \abs{\cdot}}_{L^2(nQ)} &\lesssim \big(\frac{C_{s,l,d}}{C}\big)^n .
\end{align*}
An application of \cref{lem:Sobolevbounds} together with the assumption $s-l<-1$ yields
\begin{align*}
	\sum_{n=3}^{\infty} \norm{A_{n}^{(2)}(T)}_{H^l} &\lesssim  \sum_{n=2}^\infty C^{2n} \abs{Q}^{n-1}T^{2n-1} \,   N^{n-1}\norm{\hat{u}_0}_{L^2}^{2n} \Bigg(  \norm{\wop{\cdot}^l\abs{\cdot}}_{L^2(nQ)} + \norm{\wop{\cdot}^l\abs{\cdot}}_{L^2(\Omega_{n}^{(2)}\setminus nQ)}\frac{  \norm{\hat{u}_0}_{L^2(nQ^c)} }{ \norm{\hat{u}_0}_{L^2}}  \Bigg)\\
	&\lesssim   N^{-\tfrac{1}{2}}\norm{\hat{u}_0}_{L^2} \rho^{3} (  1+ N^{l-s+1} )\simeq  (\log^2 N)^5    N^{3s-3l-2} (  1+ N^{l-s+1} )\ll \log^2 N \lesssim 	\norm{A_2^{(2)}(T)}_{H^l}.
\end{align*}
Again the same reason as in (a) implies norm inflation.\medskip

\textbf{(c')} We use the initial data from \cref{lem:lower_sobo_bounds} \ref{itm:c1} just as in case (c). The Zakharov system exhibits norm inflation in the wave coordinate for $(s,l)\in \BR^2$ such that $l\le -1$ and $s<-1$. We choose
\begin{equation*}
	T= r^{-3}\,N^{2\,s}.
\end{equation*} 
The assumption $T\ll N^{-1}$ and \cref{lem:lower_sobo_bounds} \ref{itm:c1} yield
\begin{equation*}
	\norm{A_2^{(2)}(T)}_{H^l}\gtrsim \log^2 N \to \infty \text{ as }N \to \infty.
\end{equation*}
In $L^2$ the initial data are of size $\norm{u_0}_{L^2(Q)}\simeq r$, $\norm{u_0}_{L^2(Q^c)}\simeq rN^{-s}$ and, therefore, $\norm{u_0}_{L^2}\simeq rN^{-s}$ by $s< 0$. 
\begin{equation*}
	\rho = C_{s,l,d} \abs{Q}^{\tfrac{1}{2}} N^{\tfrac{1}{2}} \norm{u_0}_{L^2} T \simeq r^{-2}  N^{s-\tfrac{1}{2}}   \ll 1
\end{equation*}
implies the convergence of the solution by \cref{lem:existenz}. Since we assumed $l+1\le 0$,
\begin{equation*}
	\norm{\wop{\cdot}^l\abs{\cdot}}_{L^2(\Omega_{n}^{(2)})}\lesssim \big(\frac{C_{s,l,d}}{C}\big)^n.
\end{equation*}
The same Sobolev bounds on the terms $A_n^{(1)}(T)$ as in (c) hold, because $l+1\le 0$. Thereby, 
\begin{equation*}
	\sum_{n=3}^{\infty} \norm{A_{n}^{(2)}(T)}_{H^l} \lesssim   N^{-\tfrac{1}{2}}\norm{\hat{u}_0}_{L^2} \rho^{3} \simeq  (\log^2 N)^5  N^{2s-2}  \ll\log^2 N \lesssim \norm{A_2^{(2)}(T)}_{H^l}.
\end{equation*}
This again implies norm inflation.\medskip

\textbf{(d)} We consider initial data as in \cref{lem:lower_sobo_bounds} \ref{itm:d}. The Zakharov system exhibits norm inflation in the Schrödinger coordinate for regularities satisfying $l-s<-2$ and $s>0$. We pick
\begin{equation*}
	T=r^{-3}\,N^{l-s}.
\end{equation*}
Thus, the assumption $T\ll N^{-2}$ and \cref{lem:lower_sobo_bounds} yield
\begin{equation*}
	\norm{A_2^{(1)}(T)}_{H^s}\gtrsim \log^2 N \to \infty \text{ as }N \to \infty. 
\end{equation*}
The initial data satisfy $\norm{u_0}_{L^2}\simeq r$ and $\norm{n_0}_{L^2}\simeq r N^{-l}$. Thereby, \cref{lem:existenz} with 
\begin{equation*}
	\rho = C\,2^{\tfrac{d+7}{2}+\abs{l} +s_\star}\abs{Q}^{\tfrac{1}{2}} \max\{ N^{\tfrac{1}{2}}\, \norm{\hat{u}_0}_{L^2}, \norm{\hat{n}_0}_{L^2} \} \, T \simeq  r^{-2} N^{\tfrac{1}{2}\vee (-l)} \, N^{l-s}=r^{-2} \begin{cases}
	  N^{l-s+\tfrac{1}{2}} &, l\ge -\tfrac{1}{2}\\
	  N^{-s} &, l< -\tfrac{1}{2}
	\end{cases}\ll 1
\end{equation*}
implies the convergence of the series $\sum_{n=1}^{\infty}A_n$ and that it solves the reduced Zakharov system. Similar arguments as in \cref{lem:besselpot_estimates} yield 
\begin{align*}
	\norm{\wop{\cdot}^{s}}_{L^2(\Omega_{n}^{(1)} \setminus nQ) } &\le3^2 \big(2^{\tfrac{d+5}{2}+\abs{s}}\big)^n \norm{\wop{\cdot}^{s}}_{L^2(Ne_1+Q)}\lesssim \big(\frac{C_{s,l,d}}{C}\big)^n N^s\\
	\intertext{and}
	\norm{\wop{\cdot}^{s}}_{L^2(nQ)} &\lesssim \big(\frac{C_{s,l,d}}{C}\big)^n\norm{\wop{\cdot}^{s}}_{L^2(Q)} \lesssim \big(\frac{C_{s,l,d}}{C}\big)^n.
\end{align*}
Since $\max\{N^{\tfrac{1}{2}} \norm{\hat{u}_0}_{L^2},\norm{\hat{n}_0}_{L^2} \}=r N^{\tfrac{1}{2}\vee (-l)}$, we apply \cref{lem:Sobolevbounds} to receive for $l\ge -\tfrac{1}{2}$ 
\begin{align*}
	\norm{A_n^{(1)}(T)}_{H^s} &\le C^n\abs{Q}^{\tfrac{n}{2}-1}T^{n-1} \, \max\limits_{\substack{1\le k\le n\\ 2\nmid k }}\Big\{ N^{\tfrac{k-1}{2}}(\norm{\hat{u}_0}_{L^2})^k (\norm{\hat{n}_0}_{L^2})^{n-k} \Big\}\\
	&\qquad\qquad\qquad\qquad\qquad\qquad\qquad\qquad\cdot\Big(  \norm{\wop{\cdot}^s}_{L^2(nQ)} + \norm{\wop{\cdot}^s}_{L^2(\Omega_{n}^{(1)}\setminus nQ)}\, N^{-l-\tfrac{1}{2}}  \Big)\\
	&= \begin{cases}
		C^n\abs{Q}^{\tfrac{n}{2}-1}T^{n-1} \,N^{\tfrac{n-1}{2}}(\norm{\hat{u}_0}_{L^2})^n\Big(  \norm{\wop{\cdot}^s}_{L^2(nQ)} + \norm{\wop{\cdot}^s}_{L^2(\Omega_{n}^{(1)}\setminus nQ)}\, N^{-l-\tfrac{1}{2}}  \Big)	&, 2\nmid n\\
		C^n\abs{Q}^{\tfrac{n}{2}-1}T^{n-1} N^{\tfrac{n-2}{2}}(\norm{\hat{u}_0}_{L^2})^{n-1} \norm{\hat{n}_0}_{L^2} \Big(  \norm{\wop{\cdot}^s}_{L^2(nQ)} + \norm{\wop{\cdot}^s}_{L^2(\Omega_{n}^{(1)}\setminus nQ)}\, N^{-l-\tfrac{1}{2}}  \Big)	&, 2\mid n
	\end{cases}\\
	&\lesssim \begin{cases}
		r \rho^{n-1}\big(  1 +  N^{s-l-\tfrac{1}{2}}  \big)	&, 2\nmid n\\
		  rN^{-l-\tfrac{1}{2}} \rho^{n-1}\big(  1 +  N^{s-l-\tfrac{1}{2}}  \big)	&, 2\mid n
	\end{cases}.
\end{align*}
Thus, we conclude norm inflation in case $l\ge -\tfrac{1}{2} $ by
\begin{align*}
	\sum\limits_{n=3}^\infty \norm{A_n^{(1)}(T)}_{T} &\lesssim r \rho^{2}\big(  1 +  N^{s-l-\tfrac{1}{2}}  \big)+rN^{-l-\tfrac{1}{2}} \rho^{3}\big(  1 +  N^{s-l-\tfrac{1}{2}}  \big)\\
	&\simeq r^{-3}     N^{2l-2s+1}  + r^{-3}    N^{l-s+\tfrac{1}{2}}     +r^{-5}   N^{2l-3s+1}+r^{-5}   N^{l-2s+\tfrac{1}{2}}\ll \log^2 N \lesssim \norm{A_2^{(1)}(T)}_{H^s}.
\end{align*}
The last line is true, because we assumed $l-s<-2$ and $s>0$. Thus, norm inflation in case $l\ge -\tfrac{1}{2}$ follows. For $l<-\tfrac{1}{2}$ 
\begin{equation*}
	\norm{A_n^{(1)}(T)}_{H^s}\le C^n\abs{Q}^{\tfrac{n}{2}-1}T^{n-1} \, \norm{\hat{u}_0}_{L^2} (\norm{\hat{n}_0}_{L^2})^{n-1} \Big(  \norm{\wop{\cdot}^s}_{L^2(nQ)} + \norm{\wop{\cdot}^s}_{L^2(\Omega_{n}^{(1)}\setminus nQ)}\Big)\lesssim r N^s\rho^{n-1}.
\end{equation*}
Therefore, we conclude norm inflation by 
\begin{align*}
\sum\limits_{n=3}^\infty \norm{A_n^{(1)}(T)}_{T} \lesssim r N^s\rho^{2} \simeq  (\log^2 N)^3 N^{-s}\ll \log^2 N\lesssim	\norm{A_2^{(1)}(T)}_{H^s}.
\end{align*}

\textbf{(e)} We pick initial data as in \cref{lem:lower_sobo_bounds} \ref{itm:e}. The Zakharov system exhibits norm inflation for regularities $s<0$, $l-s\le -\tfrac{1}{2}$ and $l<\tfrac{d}{2}-2$ for $s\ge -\tfrac{d}{2}$ and $l+s<-2$ for $s< -\tfrac{d}{2}$. We choose
\begin{equation*}
	T:= r^{-3}  \begin{cases}
		N^{l-\tfrac{d}{2}} (\log N)^{\tfrac{d}{2}+s} &, \tfrac{d}{2}+s>0\\
		N^{s+l}\,(\log A)^{-1}&, \tfrac{d}{2}+s=0\\
		N^{s+l} &, \tfrac{d}{2}+s<0
	\end{cases}.
\end{equation*}
By the above assumptions on $(s,l)$, $T\ll N^{-2}$. Thus, by \cref{lem:lower_sobo_bounds} 
\begin{equation*}
		\norm{A_2^{(1)}(T)}_{H^s}\gtrsim \log^2(N)\to \infty \text{ as } N \to \infty. 
\end{equation*}
Now we calculate the $L^2$-norm of the initial data. $\norm{u_0}_{L^2}\simeq rN^{-s}$ and $\norm{n_0}_{L^2} \simeq r N^{-l}$. Therefore, \begin{equation*}
	\max \{ N^{\tfrac{1}{2}}  \norm{u_0}_{L^2}, \norm{n_0}_{L^2} \} \simeq r N^{-l}
\end{equation*} for $l-s\le -\tfrac{1}{2}$. Additionally,
\begin{equation*}
	\rho = C\,2^{\tfrac{d+7}{2}+\abs{l} +s_\star}\abs{Q}^{\tfrac{1}{2}} \max\{ N^{\tfrac{1}{2}}\, \norm{\hat{u}_0}_{L^2}, \norm{\hat{n}_0}_{L^2} \} \, T \simeq   r^{-2}  \begin{cases}
		(\log N)^{s} &, \tfrac{d}{2}+s>0\\
		(\log N)^{-\tfrac{d+2}{2}}&, \tfrac{d}{2}+s=0\\
		N^{s+\tfrac{d}{2}} (\log N)^{-\tfrac{d}{2}} &, \tfrac{d}{2}+s<0
	\end{cases}\quad\ll 1.
\end{equation*}
An application of \cref{lem:existenz} yields the existence of the solution $\sum_{n=1}^{\infty} A_{n}$. Because we assumed $s<0$ and by \cref{lem:besselpot_estimates},
\begin{equation*}
	\norm{\wop{\cdot}^s}_{L^2(\Omega_n^{(1)})}\lesssim \big(\frac{C_{s,l,d}}{C}\big)^n \begin{cases}
		A^{\tfrac{d}{2}+s} &, \tfrac{d}{2}+s>0\\
		\log A &, \tfrac{d}{2}+s=0\\
		1 &, \tfrac{d}{2}+s<0
	\end{cases}.
\end{equation*}
Additionally, \cref{lem:Sobolevbounds} yields
\begin{align*}
	\norm{A_n^{(1)}(T)}_{H^s}&\le \norm{\wop{\cdot}^s}_{L^2(\Omega_n^{(1)})} C^n\abs{Q}^{\tfrac{n}{2}-1}T^{n-1} \, \max\limits_{\substack{1\le k\le n\\ 2\nmid k }}\Big\{ N^{\tfrac{k-1}{2}}(\norm{\hat{u}_0}_{L^2})^k (\norm{\hat{n}_0}_{L^2})^{n-k} \Big\}\\
	&\simeq   \norm{\hat{u}_0}_{L^2}\rho^{n-1} \begin{cases}
		A^{s} &, \tfrac{d}{2}+s>0\\
		A^{-\tfrac{d}{2}} \log A &, \tfrac{d}{2}+s=0\\
		A^{-\tfrac{d}{2}}&, \tfrac{d}{2}+s<0
	\end{cases}.
\end{align*}
Therefore, 
\begin{align*}
	\sum_{n=3}^{\infty} \norm{A_n^{(1)}(T)}_{H^s}&\lesssim \norm{\hat{u}_0}_{L^2}\rho^{2} \begin{cases}
		A^{s} &, \tfrac{d}{2}+s>0\\
		A^{-\tfrac{d}{2}} \log A &, \tfrac{d}{2}+s=0\\
		A^{-\tfrac{d}{2}}&, \tfrac{d}{2}+s<0
	\end{cases}\simeq   (\log^2 N)^3  \begin{cases}
	 (\log N)^{s} &, \tfrac{d}{2}+s>0\\
	(\log A)^{-\tfrac{d}{2}-1} &, \tfrac{d}{2}+s=0\\
	N^{\tfrac{d}{2}+s}(\log N)^{-\tfrac{d}{2}}&, \tfrac{d}{2}+s<0
\end{cases}\\
&\ll \log^2 N \lesssim 	\norm{A_1^{(1)}(T)}_{H^s}
\end{align*}
implies norm inflation.\medskip

\textbf{(f)} Consider initial data as in \cref{lem:lower_sobo_bounds} \ref{itm:f}. The Zakharov system experiences norm inflation for Sobolev regularities $l<\tfrac{d}{2}-2$, $0<s<\tfrac{d}{2}$ and $l-s\le-\tfrac{1}{2}$. We pick 
\begin{equation*}
	T=  r^{-3}(\log N)^{-s+\tfrac{d}{2}} N^{l-\tfrac{d}{2}} 
\end{equation*}
and notice $T\ll N^{-2}$ by the assumptions $l<\tfrac{d}{2}-2$. This yields $\norm{A_2^{(1)}(T)}_{H^s}\gtrsim \log^2 N \to \infty$ as $N\to \infty$ by \cref{lem:lower_sobo_bounds}. The initial data satisfy $\norm{n_0}_{L^2}\simeq r N^{-l}$ and by $\tfrac{d}{2}>s$
\begin{equation*}
	\norm{u_0}_{L^2}\simeq r N^{-s} (\log N)^{s}.
\end{equation*}
Since we assumed $l\le \tfrac{d}{2}-2$ and $l-s\le -\tfrac{1}{2}$, $\max \{ N^{\tfrac{1}{2}}  \norm{u_0}_{L^2}, \norm{n_0}_{L^2} \} =r N^{-l}$. We calculate
\begin{equation*}
	\rho = C\,2^{\tfrac{d+7}{2}+\abs{l} +s_\star}\abs{Q}^{\tfrac{1}{2}} \max\{ N^{\tfrac{1}{2}}\, \norm{\hat{u}_0}_{L^2}, \norm{\hat{n}_0}_{L^2} \} \, T \simeq    r^{-2}  (\log N)^{-s}\ll 1.
\end{equation*}
Therefore, the series $\sum_{n=1}^{\infty}A_n$ converges and is a mild solution of the Zakharov system by \cref{lem:existenz}. Because we assumed $s>0$ and by \cref{lem:besselpot_estimates},
\begin{equation*}
	\norm{\wop{\cdot}^s}_{L^2(\Omega_{n}^{(1)})}\lesssim \big(\frac{C_{s,l,d}}{C}\big)^n A^{\tfrac{d}{2}}N^s.
\end{equation*}
Now by \cref{lem:Sobolevbounds}, 
\begin{equation*}
	\sum_{n=3}^{\infty} \norm{A_n^{(2)}(T)}_{H^s} \lesssim \norm{u_0}_{L^2} N^s \rho^2\simeq  (\log^2 N)^3 (\log N)^{-s} \ll \log^2 N \lesssim \norm{A_2^{(1)}(T)}_{H^s}
\end{equation*}
and we conclude norm inflation.\qed
	\appendix
	\section{}
	The following proposition is a technical estimate on the sequence used in \cref{prop:summe2}.
	\begin{proposition}\label{prop:sequence}
		Define two sequences recursively for fixed starting values $b_1^1,b_1^2 \in [0,1]$
		\begin{align*}
			b_{n}^1:= C_1\,\frac{1}{n-1}\sum\limits_{\substack{n_1,n_2\in \BN\\ n_1+n_2=n}} \underline{m}^{\alpha}b_{n_1}^1b_{n_2}^2,\qquad			b_{n}^2:=C_2\,\frac{n}{n-1}\sum\limits_{\substack{n_1,n_2\in \BN\\ n_1+n_2=n}}\underline{m}^{\alpha} b_{n_1}^1b_{n_2}^1,
		\end{align*}
		where $\underline{m}:= \min\{ n_1,n_2 \}$ depends on the summation indices and $\alpha>0$, $C_1,C_2>0$ are constants. There exists $\gamma> 1$ such that 
		\begin{equation}\label{eq:sequence_induction_start}
			b_n^1 \le n^{-\alpha-2}\gamma^{n-1}, \qquad	b_n^2 \le n^{-\alpha-2}\gamma^{n-1}.
		\end{equation}
	\end{proposition}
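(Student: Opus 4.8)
The plan is to prove both bounds in \eqref{eq:sequence_induction_start} simultaneously by strong induction on $n$; the only substantive step is a dyadic decomposition of the convolution sum that appears at each stage, which is precisely what allows the two extra powers in the exponent $-(\alpha+2)$ to be reproduced.

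First I would fix the rate. Since $\alpha>0$, the series $\sum_{k\ge1}k^{-2}$ converges; set $D:=2^{\alpha+3}\sum_{k\ge1}k^{-2}$ and take $\gamma:=\max\{2,\,C_1D,\,2C_2D\}$, so in particular $\gamma>1$. The base case $n=1$ is immediate from $b_1^1,b_1^2\in[0,1]$ and $1^{-\alpha-2}\gamma^{0}=1$. For the inductive step fix $n\ge2$ and assume \eqref{eq:sequence_induction_start} for all smaller indices. Substituting the hypothesis into either recursion and using $\gamma^{n_1-1}\gamma^{n_2-1}=\gamma^{n-2}$ reduces the matter to showing that the purely numerical sum
\[
S_n:=\sum_{n_1+n_2=n}\min(n_1,n_2)^{\alpha}\,n_1^{-\alpha-2}\,n_2^{-\alpha-2}
\]
satisfies $S_n\le D\,n^{-\alpha-2}$. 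To prove this I would split the sum according to whether $n_1\le n/2$ or $n_2\le n/2$ (when $n$ is even the term $n_1=n_2=n/2$ is counted twice, which only weakens the bound). On $n_1\le n/2$ one has $\min(n_1,n_2)=n_1$, so $\min(n_1,n_2)^{\alpha}n_1^{-\alpha-2}=n_1^{-2}$, while $n_2\ge n/2$ gives $n_2^{-\alpha-2}\le(n/2)^{-\alpha-2}$; hence that part is at most $(n/2)^{-\alpha-2}\sum_{k\ge1}k^{-2}$, and the region $n_2\le n/2$ is estimated in the same way. Adding the two contributions yields $S_n\le 2\cdot 2^{\alpha+2}\big(\sum_{k\ge1}k^{-2}\big)n^{-\alpha-2}=D\,n^{-\alpha-2}$. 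Finally, using $\tfrac1{n-1}\le1$ and $\tfrac n{n-1}\le2$ for $n\ge2$, one gets $b_n^1\le C_1D\,\gamma^{n-2}n^{-\alpha-2}\le\gamma^{n-1}n^{-\alpha-2}$ and $b_n^2\le 2C_2D\,\gamma^{n-2}n^{-\alpha-2}\le\gamma^{n-1}n^{-\alpha-2}$ by the choice of $\gamma$, closing the induction.

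The one place where care is needed is the choice of the decay exponent in the ansatz. If it were only $-\alpha$, the residual sum after the dyadic split would behave like $\sum_{n_1\le n/2}n_1^{\alpha}\cdot n_1^{-\alpha}\sim n$ and destroy the decay; the surplus of two powers makes the residual sum $\sum_{n_1}n_1^{-2}$ convergent and uniform in $n$, and any surplus strictly larger than one would do just as well. Once this is in place everything else is bookkeeping: the finite constant $C_1D$, respectively $2C_2D$, is absorbed into a single geometric rate $\gamma$, and no cancellation enters anywhere.
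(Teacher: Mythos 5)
Your proof is correct and follows essentially the same route as the paper's: strong induction with the ansatz $n^{-\alpha-2}\gamma^{n-1}$, reduction to the numerical convolution sum, and the symmetric split $n_1\le n/2$ or $n_2\le n/2$ so that $\min(n_1,n_2)^{\alpha}n_1^{-\alpha-2}=n_1^{-2}$ sums to a constant while the large factor contributes $(n/2)^{-\alpha-2}$. The paper merely organizes the same estimate by treating even and odd indices $2n$, $2n+1$ separately and chooses $\gamma=2^{\alpha+5}\max\{C_1,C_2\}$ instead of your $\max\{2,C_1D,2C_2D\}$; the difference is only bookkeeping.
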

	\begin{proof}
		We define $C_{\max}:= \max\{ C_1,C_2 \}$ and choose $\gamma:= 2^{\alpha+5}\, C_{\max}$. Trivially the induction start is true. Furthermore,  
		\begin{equation*}
			b_2^1 =C_1\, b_1^1\,b_1^2 \le C_{\max} \le 2^{-\alpha-2}\gamma\quad \text{and} \quad b_2^2= C_2\,b_1^1\, b_1^1 \le C_{\max} \le 2^{-\alpha-2}\gamma.
		\end{equation*}
		Let's assume the estimates \eqref{eq:sequence_induction_start} for all $k<2n$ with our choice of $\gamma$ for a fixed $n$. This yields by changing the summation order
		\begin{equation*}
			b_{2n}^1\le C_1\, \frac{2}{2n-1}\sum_{k=1}^{n} k^{\alpha} k^{-\alpha-2} (2n-k)^{-\alpha-2} \gamma^{2n-2}\le C_{\max} \frac{2^{\alpha+3}}{2n-1} (2n)^{-\alpha-2}\gamma^{2n-2} \sum_{k=1}^n k^{-2}.
		\end{equation*}
		The second inequality in the previous line follows from $\frac{2n}{2n-k}\le 2$ for $k\in \{1,\dots, n\}$. Therefore,
		\begin{equation*}
			b_{2n}^1\le C_{\max}\underbrace{\frac{\pi^2}{6(2n-1)}}_{\le 2}2^{\alpha+3}(2n)^{-\alpha-2}\gamma^{2n-2}\le \underbrace{\frac{C_{\max}\, 2^{\alpha+4}}{\gamma}}_{\le 1}(2n)^{-\alpha-2}\gamma^{2n-1}.
		\end{equation*}
		In a similar manner, we prove the estimate for the sequence $\{b^2_n\}$. 
		\begin{align*}
			b_{2n}^2&\le 2\,C_2\,\frac{2n}{2n-1}\sum_{k=1}^{n} k^{\alpha} k^{-\alpha-2} (2n-k)^{-\alpha-2} \gamma^{2n-2} \le C_{\max}\,\underbrace{\frac{\pi^2 (2n)}{6(2n-1)}}_{\le 4}2^{\alpha+3}(2n)^{-\alpha-2}\gamma^{2n-2}\\
			&\le \underbrace{\frac{C_{\max}\,2^{\alpha+5}}{\gamma}}_{\le 1}(2n)^{-\alpha-2}\gamma^{2n-1}.
		\end{align*}
		Now we prove the estimate \eqref{eq:sequence_induction_start} for the next uneven index. We proceed just as above.
		\begin{align*}
			b_{2n+1}^1&\le C_1\,\frac{2}{2n}\sum_{k=1}^{n} k^{\alpha} k^{-\alpha-2} (2n+1-k)^{-\alpha-2} \gamma^{2n-1}\le C_{\max}\frac{2^{\alpha+3}}{2n} (2n+1)^{-\alpha-2}\gamma^{2n-1} \sum_{k=1}^n k^{-2}\\
			&\le C_{\max}\underbrace{\frac{\pi^2}{12n}}_{\le 1}2^{\alpha+3}(2n)^{-\alpha-2}\gamma^{2n-1}\le \underbrace{\frac{C_{\max}\,2^{\alpha+3}}{\gamma}}_{\le 1}(2n)^{-\alpha-2}\gamma^{2n},\\
			b_{2n+1}^2&\le C_2\,2\,\frac{2n+1}{2n}\sum_{k=1}^{n} k^{\alpha} k^{-\alpha-2} (2n+1-k)^{-\alpha-2} \gamma^{2n-1} \le  \underbrace{C_{\max}\,\frac{\pi^2 (2n+1)}{12n}\, \gamma \, 2^{\alpha+3}}_{\le 1}(2n+1)^{-\alpha-2}\gamma^{2n}.
		\end{align*}
	\end{proof}
We recall a classical result on point wise products of Sobolev function introduced in \cite{zolesio_1977}. We will use this lemma to prove \cref{lem:contsolopabove}.
\begin{lemma}[{Sobolev product estimate \cite{zolesio_1977}, \cite[Theorem 5.1]{behzadan2017multiplication}}]\label{lem:sobolevproduct}
	Let $s_1,s_2\ge s\ge 0$ and $s_1+s_2>s+\tfrac{d}{2}$ with dimension $d \ge 1$. For any $u\in H^{s_1}(\BR^d)$ and $n\in H^{s_2}(\BR^d)$
	\begin{equation*}
		\norm{un}_{H^s}\lesssim \norm{u}_{H^{s_1}}\norm{n}_{H^{s_2}}.
	\end{equation*}
\end{lemma}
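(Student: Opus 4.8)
The plan is to prove the estimate directly on the Fourier side. Put $f:=\wop{\cdot}^{s_1}\abs{\widehat u}$ and $g:=\wop{\cdot}^{s_2}\abs{\widehat n}$; these are nonnegative functions in $L^2(\BR^d)$ with $\norm{f}_{L^2}=\norm{u}_{H^{s_1}}$ and $\norm{g}_{L^2}=\norm{n}_{H^{s_2}}$, and $\abs{\widehat{un}(\xi)}\lesssim\int_{\BR^d}\abs{\widehat u(\xi-\eta)}\abs{\widehat n(\eta)}\di\eta$. Since $s\ge0$ we have $\wop{\xi}\lesssim\max\{\wop{\xi-\eta},\wop{\eta}\}$, hence $\wop{\xi}^s\lesssim\wop{\xi-\eta}^s+\wop{\eta}^s$; inserting this and writing $\abs{\widehat u(\xi-\eta)}=\wop{\xi-\eta}^{-s_1}f(\xi-\eta)$, $\abs{\widehat n(\eta)}=\wop{\eta}^{-s_2}g(\eta)$ gives
\begin{equation*}
	\wop{\xi}^s\abs{\widehat{un}(\xi)}\lesssim\int_{\BR^d}\frac{f(\xi-\eta)\,g(\eta)}{\wop{\xi-\eta}^{s_1-s}\wop{\eta}^{s_2}}\di\eta+\int_{\BR^d}\frac{f(\xi-\eta)\,g(\eta)}{\wop{\xi-\eta}^{s_1}\wop{\eta}^{s_2-s}}\di\eta=:I_1(\xi)+I_2(\xi).
\end{equation*}
The change of variables $\eta\mapsto\xi-\eta$ carries $I_2$ to $I_1$ after swapping $(f,s_1)\leftrightarrow(g,s_2)$, and all hypotheses are symmetric under this swap, so it suffices to estimate $\norm{I_1}_{L^2}$.

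In $I_1$ both weight exponents $s_1-s$ and $s_2$ are nonnegative. Set $m(\zeta,\eta):=\wop{\zeta}^{-(s_1-s)}\wop{\eta}^{-s_2}$; Cauchy--Schwarz in $\eta$ gives
\begin{equation*}
	\abs{I_1(\xi)}^2\le\Big(\int_{\BR^d}m(\xi-\eta,\eta)^2\di\eta\Big)\Big(\int_{\BR^d}f(\xi-\eta)^2g(\eta)^2\di\eta\Big),
\end{equation*}
and integrating in $\xi$ (Tonelli on the second factor) yields $\norm{I_1}_{L^2}^2\le M\norm{f}_{L^2}^2\norm{g}_{L^2}^2$ with $M:=\sup_{\xi\in\BR^d}\int_{\BR^d}\wop{\xi-\eta}^{-2(s_1-s)}\wop{\eta}^{-2s_2}\di\eta$. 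So the whole statement reduces to showing $M<\infty$.

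This is exactly where the hypothesis $s_1+s_2>s+\tfrac{d}{2}$, equivalently $2(s_1-s)+2s_2>d$, is used: it is the classical fact that $\wop{\cdot}^{-a}\ast\wop{\cdot}^{-b}\in L^\infty(\BR^d)$ whenever $a,b\ge0$ and $a+b>d$, applied with $a=2(s_1-s)$ and $b=2s_2$. I would prove it by the standard three-region decomposition of the $\eta$-integral — $\abs{\eta}\le\tfrac12\abs{\xi}$, $\abs{\xi-\eta}\le\tfrac12\abs{\xi}$, and the remaining region where both $\wop{\xi-\eta},\wop{\eta}\gtrsim\wop{\xi}$ — each contribution being bounded uniformly in $\xi$ precisely because $a+b>d$ (the degenerate cases $a=0$ and $b=0$ are immediate, since then $b>d$, resp. $a>d$). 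Collecting the two pieces, $\norm{un}_{H^s}\le\norm{I_1}_{L^2}+\norm{I_2}_{L^2}\lesssim\norm{f}_{L^2}\norm{g}_{L^2}=\norm{u}_{H^{s_1}}\norm{n}_{H^{s_2}}$. The only genuinely technical ingredient is this uniform weighted-convolution bound — which may also simply be quoted — and I expect everything else to be routine.
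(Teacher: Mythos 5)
Your proof is correct, and it is worth noting that the paper itself offers no proof of this lemma to compare against: it is quoted as a known result from the cited references and used as a black box in \cref{lem:contsolopabove}. What you supply is the standard self-contained argument: Peetre's inequality $\wop{\xi}^s\lesssim\wop{\xi-\eta}^s+\wop{\eta}^s$ (valid since $s\ge0$) to split the weighted convolution into $I_1$ and $I_2$, Cauchy--Schwarz in the convolution variable, and the uniform bound $\sup_\xi\int_{\BR^d}\wop{\xi-\eta}^{-a}\wop{\eta}^{-b}\di\eta<\infty$ for $a,b\ge0$ with $a+b>d$. The exponent bookkeeping checks out: for $I_1$ you need $2(s_1-s)\ge0$, $2s_2\ge0$ and $2(s_1-s)+2s_2>d$, which are precisely the hypotheses $s_1\ge s$, $s_2\ge s\ge0$ and $s_1+s_2>s+\tfrac{d}{2}$, and these hypotheses are symmetric under $(s_1,s_2)\mapsto(s_2,s_1)$, so $I_2$ is indeed covered by your change of variables. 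Two minor points, neither a gap: (i) to justify $\abs{\widehat{un}}\lesssim\abs{\widehat u}\ast\abs{\widehat n}$ for general $u\in H^{s_1}$, $n\in H^{s_2}$, either work with Schwartz functions and extend by density (consistent with the paper's definition of $H^s$ as a closure of Schwartz functions) or note that $s_1,s_2\ge0$ gives $u,n\in L^2$, hence $un\in L^1$ and the convolution identity holds pointwise; (ii) in the three-region estimate the region $\abs{\eta}\le\tfrac12\abs{\xi}$ contributes $\wop{\xi}^{-a}\int_{\abs{\eta}\le\abs{\xi}/2}\wop{\eta}^{-b}\di\eta$, which at the borderline $b=d$ carries a logarithm; this is harmless exactly because $a+b>d$ then forces $a>0$, but it deserves a word in a written-out version. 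Your closing remark that the weighted-convolution bound may simply be quoted is also fine --- that is effectively what the paper does with the entire lemma.
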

The following lemma is used in \cref{lem:existenz} to legitimize that the power series expansion, $\sum_{n=1}^{\infty}A_n\icol{u_0\\n_0}$, is a mild solution. 
\begin{lemma}[Continuity of $\Phi$ above $(0, -1)$]\label{lem:contsolopabove}
	Let $s_2,l_2+1\ge 0$, $d\in \BN$ and $l_1:= s_2$, $s_1$ such that
	\begin{equation*}
	s_1>\max \{\tfrac{d}{2}, \tfrac{l_2}{2}+\tfrac{d+2}{4}\} \quad \text{and}\quad 
	s_1\ge \max \{ s_2, l_2+1 \} 
	\end{equation*}
	holds. For arbitrary initial data $(u_0,n_0)\in H^{s_2,l_2}$ and any $T,\delta'>0$ the operator defined in \eqref{eq:integralgleichungalternative}
	$$
	\Phi_{\icol{u_0\\ n_0}}:\overline{B_{\delta'}^{C([0,T]; H^{s_1,l_1})}(0)} \to C([0,T];H^{s_2,l_2})
	$$
	is a continuous map.
\end{lemma}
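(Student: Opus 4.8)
The plan is to split $\Phi_{\icol{u_0\\n_0}}$ into the linear part $L\icol{u_0\\n_0}(t)=\big(e^{it\lap}u_0,\,e^{-it\abs{\nabla}}n_0\big)$ and the bilinear Duhamel term $N(\cdot,\cdot)$, and to control each coordinate separately via unitarity of the propagators together with \cref{lem:sobolevproduct}; the index hypotheses are precisely what makes the two required products close. For the Schr\"odinger coordinate, unitarity of $e^{i(t-s)\lap}$ on $H^{s_2}$ reduces the $H^{s_2}$-norm of the integrand to $\norm{\Re(n)\,u}_{H^{s_2}}$, and since $\norm{\Re(n)}_{H^{l_1}}\le\norm{n}_{H^{l_1}}$, $l_1=s_2$, $s_1\ge s_2\ge0$ and $s_1+s_2>s_2+\tfrac d2$ (using $s_1>\tfrac d2$), \cref{lem:sobolevproduct} with $(s_1',s_2',s')=(s_1,s_2,s_2)$ gives $\norm{\Re(n)u}_{H^{s_2}}\lesssim\norm{n}_{H^{l_1}}\norm{u}_{H^{s_1}}$. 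For the wave coordinate, unitarity of $e^{-i(t-s)\abs{\nabla}}$ on $H^{l_2}$ and the elementary bound $\norm{\abs{\nabla}f}_{H^{l_2}}\le\norm{f}_{H^{l_2+1}}$ reduce the integrand's $H^{l_2}$-norm to $\norm{u\overline u}_{H^{l_2+1}}$, and since $s_1\ge l_2+1\ge0$ and $2s_1>l_2+1+\tfrac d2$ (using $s_1>\tfrac{l_2}{2}+\tfrac{d+2}{4}$), \cref{lem:sobolevproduct} with $(s_1',s_2',s')=(s_1,s_1,l_2+1)$ gives $\norm{u\overline u}_{H^{l_2+1}}\lesssim\norm{u}_{H^{s_1}}^2$.

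Granting these two bilinear estimates, the rest is routine. First I would observe that $L\icol{u_0\\n_0}\in C([0,T];H^{s_2,l_2})$ because $u_0\in H^{s_2}$, $n_0\in H^{l_2}$ and $e^{it\lap}$, $e^{-it\abs{\nabla}}$ are strongly continuous unitary groups on those spaces. Next, for $(u,n)$ in the closed ball of radius $\delta'$ in $C([0,T];H^{s_1,l_1})$, the curves $F_1:=\Re(n)u$ and $F_2:=\abs{\nabla}(u\overline u)$ lie in $C([0,T];H^{s_2})$ and $C([0,T];H^{l_2})$ respectively (composition of the continuous curves $u,n$ with the bounded bilinear product maps), with $\sup_{[0,T]}\norm{F_1}_{H^{s_2}}\lesssim(\delta')^2$ and $\sup_{[0,T]}\norm{F_2}_{H^{l_2}}\lesssim(\delta')^2$. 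Integrating in time then yields $\norm{N(\icol{u\\n},\icol{u\\n})}_{C([0,T];H^{s_2,l_2})}\lesssim T(\delta')^2$, and time continuity of the Duhamel term follows from the standard splitting of $\int_0^t-\int_0^{t'}$ into $\int_0^{t'}\!\big(e^{i(t-s)\lap}-e^{i(t'-s)\lap}\big)F_1(s)\,ds$, which tends to $0$ in $H^{s_2}$ by strong continuity and dominated convergence, plus $\int_{t'}^t e^{i(t-s)\lap}F_1(s)\,ds$, which is $O(|t-t'|)$; the wave coordinate is identical. Hence $\Phi_{\icol{u_0\\n_0}}$ maps the ball into $C([0,T];H^{s_2,l_2})$.

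Finally, continuity of $\Phi_{\icol{u_0\\n_0}}$ as a map follows from the bilinearity of $N$: for $(u,n),(u',n')$ in the ball I would write $\Re(n)u-\Re(n')u'=\Re(n)(u-u')+\Re(n-n')u'$ and $u\overline u-u'\overline{u'}=u(\overline u-\overline{u'})+(u-u')\overline{u'}$, apply the two bilinear estimates to each summand, and conclude
\[
\norm{\Phi_{\icol{u_0\\n_0}}(\icol{u\\n})-\Phi_{\icol{u_0\\n_0}}(\icol{u'\\n'})}_{C([0,T];H^{s_2,l_2})}\lesssim T\,\delta'\,\norm{\icol{u\\n}-\icol{u'\\n'}}_{C([0,T];H^{s_1,l_1})},
\]
so $\Phi_{\icol{u_0\\n_0}}$ is in fact Lipschitz on the ball. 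I do not expect any genuine obstacle here; the only point requiring attention is bookkeeping the index constraints, where the sharp ones are $s_1>\tfrac d2$ (the Schr\"odinger product, at scaling-critical regularity) and $s_1>\tfrac{l_2}{2}+\tfrac{d+2}{4}$ (the wave product, after the one-derivative loss from $\abs{\nabla}$), while $s_2,l_2+1\ge0$ and $s_1\ge\max\{s_2,l_2+1\}$ are exactly the monotonicity hypotheses $s_1',s_2'\ge s'\ge0$ of \cref{lem:sobolevproduct}.
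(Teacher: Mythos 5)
Your proposal is correct and follows essentially the same route as the paper: unitarity of the propagators, the Sobolev product estimate (\cref{lem:sobolevproduct}) applied with exactly the same index choices $(s_1,l_1,s_2)$ and $(s_1,s_1,l_2+1)$ after the bound $\wop{\cdot}^{l_2}\abs{\cdot}\le\wop{\cdot}^{l_2+1}$, and continuity via the standard bilinear splitting of differences. The only differences are cosmetic — you spell out the time continuity that the paper calls obvious, and you use the cleaner algebraic identity $u\overline u-u'\overline{u'}=u(\overline u-\overline{u'})+(u-u')\overline{u'}$ where the paper bounds by $\abs{u_1-u_2}(\abs{u_1}+\abs{u_2})$.
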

We will choose $\delta'>0$ dependent on the size of our constructed solution in the respective Sobolev norms. 
\begin{proof}
	We fix initial data $(u_0,n_0)\in H^{s_2,l_2}$ and $\delta'>0$. Let $(u,n)\in B_{\delta'}^{C([0,T]; H^{s_1,l_1})}(0)$. Surely, 
	\begin{equation}\label{eq:appendix_continuity_Phi_estmate1}
	\norm{\Phi_{\icol{u_0\\ n_0}}(u,n)}_{L^\infty([0,T];H^{s_2,l_2})} \le \norm{\icol{e^{it\lap} u_0 \\ e^{-it\abs{\nabla}} n_0 }}_{L_t^\infty([0,T]; H^{s_2,l_2})}+\norm{N(u,n),(u,n)}_{L^\infty([0,T]; H^{s_2,l_2})}.
	\end{equation}
	We consider the two terms on the right side of the inequality \eqref{eq:appendix_continuity_Phi_estmate1} separately. Recall that both propagators are unitary. This yields \begin{equation*}
		\norm{\icol{e^{it\lap} u_0 \\ e^{-it\abs{\nabla}} n_0 }}_{L_t^\infty([0,T]; H^{s_2,l_2})} = \norm{\icol{u_0\\ n_0}}_{H^{s_2,l_2}}.
	\end{equation*} Now we split the second term on the right-hand side of \eqref{eq:appendix_continuity_Phi_estmate1}, i.e. $\norm{N(u,n),(u,n)}_{L^\infty([0,T]; H^{s_2,l_2})}$, into
	\begin{equation*}
		\underbrace{\norm{-i\int_0^t e^{i(t-s)\lap} u(s)\Re(n(s)) \di s }_{L_t^\infty([0,T];H^{s_2})}}_{(I)}+\underbrace{\norm{-i\int_0^t e^{-i(t-s)\abs{\nabla}} \abs{\nabla}u(s)\overline{u}(s) \di s }_{L_t^\infty([0,T];H^{l_2})}}_{(II)}.
	\end{equation*}
	To clarify the prerequisites of \cref{lem:sobolevproduct} for both (I) and (II) beforehand, notice $s_1,l_1\ge s_2$, $s_1+l_1= s_1+s_2>\tfrac{d}{2}+s_2$, $s_1\ge l_2+1$ and $2s_1 > l_2+1+\tfrac{d}{2}$. Triangle inequality and an application of \cref{lem:sobolevproduct} yield
	\begin{equation*}
	(I)\le \sup_{t\in[0,T]} t\norm{u(t)\Re(n(t))}_{H^{s_2}}\lesssim \sup_{t\in[0,T]} t\norm{u(t)}_{H^{s_1}}\norm{\Re(n(t))}_{H^{l_1}}\le T\norm{u}_{C([0,T];H^{s_1})} \norm{n}_{C([0,T];H^{l_1})}.
	\end{equation*}
	Similar arguments using \cref{lem:sobolevproduct} with $\wop{\cdot}^{l_2}\abs{\cdot}\le \wop{\cdot}^{l_2+1}$ yield the following bound for the second term $(II)$.
	\begin{align*}
	(II)\le  \sup_{t\in[0,T]} t\norm{ \abs{\nabla}u(t)\overline{u(t)}}_{H^{l_2}} \le  \sup_{t\in[0,T]} t\norm{u(t)\overline{u(t)}}_{H^{l_2+1}}\lesssim \sup_{t\in[0,T]} t \norm{u(t)}_{H^{s_1}} \norm{\overline{u(t)}}_{H^{s_1}}\le T \norm{u}_{C([0,T];H^{s_1})}^2.
	\end{align*}
	The continuous time dependence is obvious. Fix a small $\epsilon>0$ and $0<\delta\ll \tfrac{\epsilon}{4\delta' T}$. Let $(u_1,n_1), (u_2,n_2)\in B_{\delta'}^{C([0,T]; H^{s_1,l_1})}(0)$ with $\norm{(u_1,n_1)-(u_2,n_2)}_{C([0,T]; H^{s_1,l_1})}<\delta$. We estimate the difference of the images under the map $\Phi_{\icol{u_0\\n_0} }$ in $C([0,T];H^{s_2,l_2})$, which we transform with a dimensional constant 
	\begin{align*}
	&\norm{\Phi_{\icol{u_0\\ n_0}}(u_1,n_1) - \Phi_{\icol{u_0\\ n_0}}(u_2,n_2) }_{C([0,T]; H^{s_2}\times H^{l_2})}\\
	&\qquad\qquad\le\underbrace{\norm{-i\int_{0}^{t}e^{i(t-s)\lap} (u_1(s)\Re(n_1(s)) - u_2(s)\Re(n_2(s))) \di s }_{C([0,T];H^{s_2})}}_{(III)} \\
	&\qquad\qquad\quad+ \underbrace{\norm{-i\int_{0}^{t}e^{-i(t-s)\abs{\nabla}} \abs{\nabla} (\abs{u_1(s)}^2 - \abs{u_2(s)}^2) \di s}_{C([0,T];H^{l_2})}}_{(IV)}.
	\end{align*}
	We estimate both terms separately. 
	\begin{align*}
	(III)&\le T \sup_{t\in[0,T]} \norm{u_1 \Re(n_1)-u_2 \Re(n_2)}_{H^{s_2}}\\
	&\le  T \sup_{t\in[0,T]} \norm{u_1(t) (\Re(n_1(t))-\Re(n_2(t)))}_{H^{s_2}} +T \sup_{t\in[0,T]} \norm{(u_1(t)-u_2(t)) \Re(n_2(t))}_{H^{s_2}}.\\
	\intertext{Furthermore, the Sobolev product estimates \cref{lem:sobolevproduct} yield}
	(III)&\le  T \sup_{t\in[0,T]} \norm{u_1(t)}_{H^{s_1}} \norm{\Re(n_1)-\Re(n_2)}_{H^{l_1}} +T \sup_{t\in[0,T]} \norm{u_1-u_2}_{H^{s_1}}\norm{ \Re(n_2)}_{H^{l_1}}\\
	&\le T \delta'\norm{n_1-n_2}_{C([0,T];H^{l_1})}+T\delta' \norm{u_1-u_2}_{C([0,T];H^{s_1})}\le 2\, T \delta' \delta<\epsilon.
	\end{align*}
	Similarly, we estimate
	\begin{equation*}
		(IV)\le T \sup_{t\in[0,T]} \norm{\abs{\nabla}\left( \abs{u_1(t)}^2-\abs{u_2(t)}^2\right)}_{H^{l_2}}\le T \sup_{t\in[0,T]} \norm{ \abs{u_1(t)-u_2(t)}(\abs{u_1(t)}+\abs{u_2(t)})}_{H^{l_2+1}}.
	\end{equation*}
	The second inequality in the previous line follows from $\wop{\xi}^{l_2}\abs{\xi}\le \wop{\xi}^{l_2+1}\le \wop{\xi}^{s_1}$ for $\xi \in \BR^d$. We use \cref{lem:sobolevproduct} to estimate $(IV)$ further.
	\begin{equation*}
	(IV)\le T \sup_{t\in[0,T]} \norm{ u_1(t)-u_2(t)}_{H^{s_1}}\norm{\abs{u_1(t)}+\abs{u_2(t)}}_{H^{s_1}}\le 2\,T \delta'\norm{ u_1(t)-u_2(t)}_{C([0,T];H^{s_1})}\le 2\, T \delta' \delta<\epsilon.
	\end{equation*}
	Therefore, we conclude the continuity by
	\begin{equation*}
	\norm{\Phi_{\icol{u_0\\ n_0}}(u_1,n_1) - \Phi_{\icol{u_0\\ n_0}}(u_2,n_2) }_{L^\infty([0,T];H^{s_2,l_2})} <4\delta' \delta T\ll \epsilon.
	\end{equation*}
\end{proof}

\medskip

\printbibliography 
\medskip 
\small
E-mail address: \href{mailto:fgrube@math.uni-bielefeld.de}{fgrube@math.uni-bielefeld.de} 
\end{document}